\documentclass[12pt,hidelinks]{amsart}

\usepackage{amsmath}
\usepackage{amsthm}
\usepackage{stmaryrd}
\usepackage{amsfonts,mathrsfs,amssymb}%,mathabx}%,txfonts}
\usepackage{verbatim}
\usepackage{tikz-cd}
\usepackage{times}
\usepackage{fullpage}
\usepackage{enumerate}
\setlength\arraycolsep{2pt}

\makeatletter
\def\@tocline#1#2#3#4#5#6#7{\relax
  \ifnum #1>\c@tocdepth % then omit
  \else
    \par \addpenalty\@secpenalty\addvspace{#2}%
    \begingroup \hyphenpenalty\@M
    \@ifempty{#4}{%
      \@tempdima\csname r@tocindent\number#1\endcsname\relax
    }{%
      \@tempdima#4\relax
    }%
    \parindent\z@ \leftskip#3\relax \advance\leftskip\@tempdima\relax
    \rightskip\@pnumwidth plus4em \parfillskip-\@pnumwidth
    #5\leavevmode\hskip-\@tempdima
      \ifcase #1
       \or\or \hskip 1em \or \hskip 2em \else \hskip 3em \fi%
      #6\nobreak\relax
    \hfill\hbox to\@pnumwidth{\@tocpagenum{#7}}\par% <---- \dotfill -> \hfill
    \nobreak
    \endgroup
  \fi}
\makeatother

\title{Positivity and $\mathbf{L^2}$ Extension}
\author{Dror Varolin}
%\thanks{Department of Mathematics, Stony Brook University, Stony Brook, NY 11794-3651} 
\email{dror@math.stonybrook.edu}
\address{Department of Mathematics \newline \indent Stony Brook University \newline \indent Stony Brook, NY 11794-3651}

%formatting

\newcommand{\noi}{\noindent}

%calligraphy

\newcommand{\cE}{{\mathcal E}}
\newcommand{\cF}{{\mathcal F}}

\newcommand{\cO}{{\mathcal O}}

\newcommand{\cZ}{{\mathcal Z}}

\newcommand{\sB}{{\mathscr B}}
\newcommand{\sC}{{\mathscr C}}

\newcommand{\sH}{{\mathscr H}}
\newcommand{\sI}{{\mathscr I}}

\newcommand{\sL}{{\mathscr L}}

\newcommand{\sR}{{\mathscr R}}

\newcommand{\sT}{{\mathscr T}}

\newcommand{\sV}{{\mathscr V}}

\newcommand{\sX}{{\mathscr X}}

%frak-caps

\newcommand{\fF}{{\mathfrak F}}

\newcommand{\fH}{{\mathfrak H}}
\newcommand{\fI}{\mathfrak{I}}
\newcommand{\fJ}{{\mathfrak J}}

%frak-lower case

\newcommand{\fe}{{\mathfrak e}}
\newcommand{\ff}{{\mathfrak f}}
\newcommand{\fg}{{\mathfrak g}}
\newcommand{\fh}{{\mathfrak h}}

\newcommand{\fp}{{\mathfrak p}}

%Complex Analysis

\newcommand{\vp}{\varphi} 
\newcommand{\ve}{\varepsilon}

\newcommand{\bbC}{{\mathbb C}}
\newcommand{\bbD}{{\mathbb D}}

\newcommand{\bbL}{{\mathbb L}}

\newcommand{\bbN}{{\mathbb N}}
\newcommand{\bbO}{{\mathbb O}}
\newcommand{\bbP}{{\mathbb P}}
\newcommand{\bbQ}{{\mathbb Q}}
\newcommand{\bbR}{{\mathbb R}}

\newcommand{\bbV}{{\mathbb V}}

\renewcommand{\Re}{\operatorname{Re}}

\newcommand{\red}{\hfill $\diamond$}

\newcommand{\re}{\Re}

\newcommand{\di}{\partial}
\newcommand{\dbar}{\bar \partial}

\newcommand{\ii}{\sqrt{\text{\rm -}1}}

%Mapping notation

\newcommand{\tensor}{\otimes}

\def\XXint#1#2#3{{\setbox0=\hbox{$#1{#2#3}{\int}$} 
\vcenter{\hbox{$#2#3$}}\kern-.5\wd0}}

\usepackage{hyperref}

\begin{document}
%\setlength{\textwidth}{24cm}
%\setlength{\textheight}{30cm}
%\maketitle

\theoremstyle{plain}
\newtheorem{m-thm}{\sc Theorem}
\newtheorem*{s-thm}{\sc Theorem}
\newtheorem{lem}{\sc Lemma}[section]
\newtheorem{thm}[lem]{\sc Theorem}
\newtheorem{prop}[lem]{\sc Proposition}
\newtheorem{cor}[lem]{\sc Corollary}

\theoremstyle{definition}
\newtheorem{conj}[lem]{\sc Conjecture}
\newtheorem*{sconj}{\sc Conjecture}
\newtheorem{prob}[lem]{\sc Open Problem}
\newtheorem{defn}[lem]{\sc Definition}
\newtheorem*{s-defn}{\sc Definition}
\newtheorem{qn}[lem]{\sc Question}
\newtheorem{ex}[lem]{\sc Example}
\newtheorem{rmk}[lem]{\sc Remark}
\newtheorem*{s-ex}{\sc Example}
\newtheorem*{s-qn}{\sc Question}
\newtheorem*{s-prop}{\sc Proposition}
\newtheorem*{s-rmk}{\sc Remark}
\newtheorem*{s-rmks}{\sc Remarks}
\newtheorem{rmks}[lem]{\sc Remarks}
\newtheorem*{ack}{\sc Acknowledgments}
\newtheorem*{notation}{\sc \underline{Notation}}

\renewcommand\thesubsubsection{\thesubsection.\roman{subsubsection}}

\begin{abstract}
We examine the relationship between positivity of a vector bundle E and the problem of $L^2$ extension of holomorphic E-valued forms of top degree. In particular, we show by example that Griffiths positivity is not enough. Though the sufficiency of Nakano positivity of E has been known for some time, we provide another proof along the lines of Berndtsson and Lempert. For such a proof we establish a vector bundle analogue of a well-known result of Berndtsson. This result is of independent interest and should have many other useful applications.
\end{abstract}

\maketitle

\section{Introduction}

The $L^2$ Extension Theorem for holomorphic sections of vector bundles has been of fundamental importance since its original inception by Ohsawa and Takegoshi \cite{ot}.  There are many versions of the result, but perhaps the following version strikes a good balance between generality and relative simplicity.

\begin{thm}\label{OTM}
Let $X$ be an essentially Stein manifold equipped with a K\"ahler metric $g$, let $Z$ be a smooth hypersurface cut out by a holomorphic section $T \in H^0(X, \cO (L_Z))$ of the line bundle $L_Z \to X$ associated to the smooth divisor $Z$, and let $\fg$ be a smooth metric for $L_Z$ such that 
\[
\sup _X |T|^2 e^{-\lambda} \le 1.
\]
Let $E \to X$ be a holomorphic vector bundle with a smooth Hermitian metric $\fh$ such that 
\begin{equation}\label{curv-hyp-OT}
\ii \Theta (\fh) \ge_{\rm Nak} t \delta \ii \Theta (\fg)\tensor {\rm Id}_E \quad \text{for all }t \in [0,1].
\end{equation}
Then for every $f \in H^0(Z, \cO_Z (K_Z \tensor E|_{Z}))$ satisfying
\[
\int _Z |f|^2_{\fh} < +\infty
\]
there exists $F \in H^0(X, \cO _X(K_X \tensor L_Z \tensor E))$ such that 
\[
F|_Z = f\wedge dT  \quad \text{and} \quad \int _X |F|^2_{\fh\tensor \fg} \le \frac{\pi (1+\delta)}{\delta} \int _Z |f|^2_{\fh} .
\]
\end{thm}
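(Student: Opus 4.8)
The plan is to follow the Berndtsson--Lempert strategy, converting the extension problem into a statement about positivity of a holomorphic family of weighted Bergman spaces. First I would reduce to the case that $X$ is relatively compact and smoothly bounded, pseudoconvex, with the data smooth up to the boundary: exhaust the essentially Stein $X$ by pseudoconvex domains $X_j \relcomp X$, prove the estimate on each $X_j$ with the same constant $\tfrac{\pi(1+\delta)}{\delta}$, and extract from the extensions $F_j$ a subsequence converging in $L^2_{\rm loc}$; Montel's theorem produces a holomorphic limit $F$ on $X$ restricting correctly along $Z$, and lower semicontinuity of the $L^2$ norm preserves the bound. So assume from now on that $X$ is such a domain.

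On this fixed $X$, introduce a family of metrics on $L_Z$ that concentrates mass transversally along $Z$. Writing $\psi := \log|T|^2_{\fg}$ (so $\psi \le 0$ by hypothesis), I would take, for a complex parameter $\tau$ ranging over a half--plane and with all relevant quantities depending only on $\Re\tau$, a metric of the form $\fg_\tau = \fg\, e^{-\chi_\tau(\psi)}$ for a suitable family of convex increasing functions $\chi_\tau$, keeping the Nakano--positive metric $\fh$ on $E$ fixed. Let $\cH_\tau$ be the Hilbert space of holomorphic sections $F$ of $K_X\tensor L_Z\tensor E$ with $\|F\|_\tau^2 := \int_X |F|^2_{\fh\tensor\fg_\tau} < \infty$, and let $F_\tau \in \cH_\tau$ be the minimal--norm extension of $f\wedge dT$, i.e.\ the element orthogonal to the kernel of the restriction map to $Z$. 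The two ends of the family are arranged so that: (i) as $\Re\tau$ tends to one end, $\fg_\tau$ blows up along $Z$ at exactly the rate that makes the normalized restriction map an isometry onto the relevant weighted $L^2(Z)$ --- a transversal Gaussian integral $\int_{\bbC} e^{-|w|^2}\,\tfrac{\ii}{2}\,dw\wedge d\bar w = \pi$ supplies the factor $\pi$, and optimizing the shape of $\chi_\tau$ supplies the factor $(1+\delta)/\delta$, so that $\|F_\tau\|_\tau^2 \longrightarrow \tfrac{\pi(1+\delta)}{\delta}\int_Z |f|^2_{\fh}$; and (ii) at the other end $\fg_\tau$ degenerates to a bounded multiple of $\fg$, so $F_\tau \to F$ with $\|F_\tau\|_\tau^2 \to \int_X |F|^2_{\fh\tensor\fg}$ and $F|_Z = f\wedge dT$.

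The heart of the matter is a convexity statement: $\Re\tau \mapsto \log\|F_\tau\|_\tau^2$ is convex. Granting this, the function is dominated by its limiting values at the two ends, and (i)--(ii) yield $\int_X |F|^2_{\fh\tensor\fg} \le \tfrac{\pi(1+\delta)}{\delta}\int_Z|f|^2_{\fh}$, which is the assertion. The convexity should come from a vector--bundle analogue of Berndtsson's theorem on positivity of direct images: one shows that $\tau\mapsto F_\tau$ is a holomorphic section of the direct image bundle of $K_X\tensor L_Z\tensor E$ over the half--plane, carrying the $L^2$ metric, and that this metric has nonnegative curvature because the total space metric $\fh\tensor\fg_\tau$ is Nakano--semipositive. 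This last point is exactly where hypothesis \eqref{curv-hyp-OT} is used: as $\tau$ and $\psi$ vary, the slope $\chi_\tau'(\psi)$ sweeps out (a multiple of) the interval $[0,1]$, and the requirement $\ii\Theta(\fh)\ge_{\rm Nak} t\,\delta\,\ii\Theta(\fg)\tensor{\rm Id}_E$ for \emph{all} $t\in[0,1]$ is precisely what makes $\ii\Theta(\fh\tensor\fg_\tau)$ Nakano--semipositive throughout the family.

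I expect the principal obstacle to be establishing that vector--bundle Berndtsson theorem, and within it, isolating the correct positivity hypothesis. The example promised in the abstract shows Griffiths positivity of $E$ is insufficient, so one genuinely needs Nakano positivity (or a suitable twisted notion), and the curvature computation for the direct image of a Nakano--positive bundle multiplied by a concentrating weight must be done carefully --- this is the new technical ingredient. The remaining difficulties are more routine: the two endpoint asymptotics of (i)--(ii), which is where the sharp constant $\tfrac{\pi(1+\delta)}{\delta}$ emerges from the normal Gaussian integral and the optimization of $\chi_\tau$, and the exhaustion argument reducing to smoothly bounded domains. As an alternative avoiding direct images, one could run the classical Ohsawa--Takegoshi twisted Bochner--Kodaira--Nakano argument: solve $\dbar u = \dbar\tilde f$ for a smooth cutoff extension $\tilde f$ of $f\wedge dT$, using a weight with a $-\log|T|^2_{\fg}$ singularity to force $u|_Z = 0$ and a twist that is a function of $\log(|T|^2_{\fg}+\ve)$, then let $\ve\downarrow 0$; there too \eqref{curv-hyp-OT} is exactly the inequality that renders the twisted curvature term nonnegative.
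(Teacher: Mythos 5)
Your plan follows the Berndtsson--Lempert degeneration strategy, which the paper does pursue, but there the method only yields Theorem \ref{OTM} under a \emph{strictly stronger} curvature hypothesis, namely $\ii\Theta(\fh) \ge_{\rm Nak} \delta\,\ii\Theta(\fg)\tensor{\rm Id}_E \ge_{\rm Nak} 0$, which additionally demands $\ii\Theta(\fg)\ge 0$. The paper's proof of Theorem \ref{OTM} \emph{as stated}, under the full hypothesis \eqref{curv-hyp-OT}, is obtained by a different argument: a careful reduction to Guan and Zhou's general extension theorem \cite[Theorem 2.1]{gz}. One takes $\Psi = \log|T|^2_\fg$, identifies Ohsawa's measure $dV_M[\Psi]$ via the coarea formula, and chooses $c_A\equiv 1$, $A=0$, $a(t)=1/\delta$ to produce the constant $\pi(1+\delta)/\delta$. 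This twisted Bochner--Kodaira route is the one you mention only as an afterthought; if the theorem is to be proved under its stated hypothesis, that is the route that must actually be taken.

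The genuine gap in your main proposal is the claim that hypothesis \eqref{curv-hyp-OT} ``is precisely what makes $\ii\Theta(\fh\tensor\fg_\tau)$ Nakano-semipositive throughout the family.'' This is not substantiated, and the paper explicitly remarks, at the end of its Berndtsson--Lempert section, that it remains an open question whether that degeneration technique can be modified to reach the full Theorem \ref{OTM}. Concretely: with $\psi=\log|T|^2_\fg$ one has $\ii\di\dbar\psi = -\ii\Theta(\fg)$ on $X\setminus Z$, so for $\fg_\tau = \fg\,e^{-\chi_\tau(\psi)}$,
\[
\ii\Theta(\fg_\tau) = \bigl(1-\chi_\tau'(\psi)\bigr)\,\ii\Theta(\fg) + \chi_\tau''(\psi)\,\ii\di\psi\wedge\dbar\psi.
\]
When $\ii\Theta(\fg)$ is indefinite, the combined curvature condition needed for the direct-image positivity theorem is covered by \eqref{curv-hyp-OT} only when the slope $\chi_\tau'$ lies in the window $[1,\,1+\delta]$; but the degeneration must interpolate between the unweighted metric (slope $0$) and the concentrating regime (slope near $1+\delta$), and nothing in the hypothesis controls the intermediate slopes $\chi_\tau'\in(0,1)$ against an indefinite $\Theta(\fg)$. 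The paper sidesteps this by first proving a flat case --- $L_Z$ trivial, $T$ a bounded holomorphic function, so that the concentration weight $p\max(\log|T|^2-\Re\tau,\,0)$ is plurisubharmonic outright and no curvature of $\fg$ enters --- and then reducing the general case via Grauert's disk bundle $\sB(\fg)\subset L_Z^*$; pseudoconvexity of $\di^{\rm vert}\sB(\fg)$ is equivalent to $\ii\Theta(\fg)\ge 0$, which is exactly where the extra hypothesis enters and why, by this method, it currently cannot be removed.
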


Though it is not immediately obvious, Theorem \ref{OTM} follows from \cite[Theorem 2.1]{gz}.  (If the optimal constant $\frac{\pi(1+\delta)}{\delta}$ is not insisted upon then the result is much older.)  However, the theorem of Guan-Zhou, itself an optimal-constant version of a theorem of Ohsawa \cite[Theorem 4]{o5}, is very general, and a large fraction of the content of \cite{gz} is devoted to extracting and proving corollaries.  Theorem \ref{OTM} is not among these corollaries, so in Section \ref{OTM-proofs-section} we show how it follows.  

In the past few years a link has been drawn between $L^2$ Extension and Griffiths positivity.  In fact, in \cite{dnwz} it is shown that \emph{asymptotically universal local $L^2$ extension from a point}\footnote{This is not the  terminology of \cite{dnwz}.} implies Griffiths positivity.  The converse was proved in \cite{kp} for smooth metrics and more generally for metrics whose determinant is locally bounded.

The aforementioned link suggests the following question.

\begin{s-qn}
If the rank of $E$ is at least two, is Theorem \ref{OTM} true if the curvature hypothesis \eqref{curv-hyp-OT} is required to hold in the weakest sense, namely in the sense of Griffiths? 
\end{s-qn}

Unfortunately the answer is negative, as the following theorem shows.

\begin{m-thm}\label{counterexample-thm}
There exist a compact K\"ahler manifold $X$, a compact complex submanifold $Z \subset X$ of codimension $1$, a section $T \in H^0(X, \cO (L_Z))$, a metric $\fg$ for $L_Z$, a holomorphic vector bundle $E \to X$ and a metric $\fh$ for $E \to X$ such that for some $\delta > 0$
\[
\sup _X |T|^2_{\fg}\le 1, \quad  \ii \Theta (\fh)  \ge_{\rm Grif} \delta \ii \Theta (\fg) \quad \text{for all }t \in [0,1],
\]
and the restriction map 
\[
H^0(X, \cO (K_X \tensor L_Z \tensor E)) \ni F \mapsto \frac{F|_Z}{dT}\in  H^0(Z, \cO (K_Z \tensor E|_Z))
\]
fails to be surjective. 
\end{m-thm}

\noi Theorem \ref{counterexample-thm} is proved in Section \ref{examples-section}.

If one nevertheless tries to prove an extension theorem like Theorem \ref{OTM} for a Griffiths-positive vector bundle $E \to X$, the natural step is to pass to the projectivization $\cO _E (1) \to \bbP(E^*)$ and try to invoke the rank-1 case of Theorem \ref{OTM}.  In Section \ref{OT-Grif-pf-section} we investigate this approach.

After the appearance of \cite{gz}, Berndtsson and Lempert gave a proof of Theorem \ref{OTM} for functions on domains and under curvature hypotheses that are slightly more restrictive than \eqref{curv-hyp-OT}.  In Section \ref{OTM-proofs-section} we extend the proof of Berndtsson and Lempert to the setting of vector bundles over Stein manifolds.  To do so, we also need a higher-rank version of Berndtsson's result on positivity of direct images \cite[Theorem 1.1]{bo-annals}, which we now formulate.  

Let $X$ be a relatively compact domain in a complex manifold $Y$ of complex dimension $n$, let $B$ be a complex manifold of dimension $m$, and denote by $p_1 :Y \times B \to Y$ and $p_2 :Y \times B \to B$ the Cartesian projections. Let $\tilde E \to Y$ be a holomorphic vector bundle of rank $r$.  Fix a smooth metric $\tilde \fh$ for the holomorphic vector bundle $p_1^* \tilde E \to Y \times B$.  We set $E := \tilde E|_X$ and $\fh := \tilde \fh |_{X \times B}$.  Thus we have a trivial holomorphic family of vector bundles with non-trivial metric:
\[
(p_1^*E , \fh) \to X\times B \stackrel{p_2}{\to} B.
\]
Though possibly non-trivial, the metric $\fh$ very usefully extends smoothly to $\di X \times B$.

To each $t\in B$ we associate the Bergman space 
\[
\sH _t := \left \{ f \in H^0(X, \cO (K_X \tensor E))\ ;\ \int _X \ii ^{n^2}\left< f \wedge \bar f , \fh^{(t)} \right > < +\infty \right \},
\]
where $\fh ^{(t)}$ is the metric for $E \to X$ that we canonically identify with the metric $\fh (\cdot, t)$ obtained by restricting the metric $\fh$ for $p_1^*E \to X \times B$ to the fiber over $X \times \{t\}$.  It is well-known that $\sH_t$ is a Hilbert space.  Since $X$ is a bounded domain and the metrics $\fh^{(t)}$, $t \in B$, are all smooth up to the boundary of $X$, the vector subspaces $\sH _t \subset H^0(X, \cO (K_X \tensor E))$ do not depend on $t$.  We can therefore define the trivial vector bundle $\sH \to B$ whose fiber over $t \in B$ is $\sH _t$.  Note that a smooth (resp. holomorphic) section of $\sH \to B$ is uniquely determined by a smooth (resp. holomorphic) section of $p_1 ^* E \to X\times B$ whose restriction to each fiber $X \times \{t\}$ lies in $\sH _t$.  

One defines the metric
\[
(f_1,f_2) := \ii ^{n^2}\int _X \left < f_1 \wedge \bar f_2, \fh ^{(t)} \right >, \quad f_1,f_2\in \sH_t
\]
for the possibly infinite-rank vector bundle $\sH \to B$.  Then one has a Chern connection (which is only densely-defined if the rank of $\sH$ is infinite) that we compute in Section \ref{dit-appendix}.  We then have the following theorem.

\begin{m-thm}\label{dit-triv}
With the above notation, assume that $Y$ is Stein and that $X$ is pseudoconvex.  If 
\begin{enumerate}[{\rm a.}]
\item $(E, \fh^{(t)})$ is Nakano-nonnegative on $X$ for each $t \in B$ and 
\item $(E, \fh)$ is $k$-positive (resp. $k$-nonnegative) on $X \times B$
\end{enumerate}
then $\sH \to B$ is $k$-positive (resp. $k$-nonnegative).
\end{m-thm}

\noi The proof of Theorem \ref{dit-triv} occupies Section \ref{dit-appendix}.  

Such a `deformation of Bergman spaces' is a special case of a more general story.  A short summary is discussed in Section \ref{examples-section}, and a more detailed presentation can be found in \cite{v-BLS}.

\begin{ack}
I am grateful to Sebastien Boucksom and Mihai P\u{a}un for stimulating conversations.  This work was done in part during visits to Sorbonne Universit\'e and Universit\"at Bayreuth, and I thank those institutions for their hospitality.  Finally, I am grateful to the anonymous referee for their effort and their very useful comments, which have made the paper much easier to read.
\end{ack}

\section{Background and preliminaries}\label{background-section}

We present some background material for the reader's convenience and to establish notation.

\subsection{Positivity of holomorphic Hermitian vector bundles}\label{smooth-metrics-pos-par}

Let $E \to X$ be a holomorphic vector bundle with a smooth Hermitian metric $\fh$.  The curvature of the Chern connection is a ${\rm Hom}(E,E)$-valued $(1,1)$-form $\Theta (\fh)$ on $X$ that is $\fh$-skew Hermitian, i.e.,  
\[
\fh (\Theta(\fh)v,w) + \fh (v,\Theta(\fh)w) = 0 \quad \text{for all $x \in X$ and all $v, w \in E_x$}.
\]
One therefore obtains a Hermitian form on $T^{1,0} _X \tensor E \to X$ defined on indecomposable tensors by 
\[
\{ \xi \tensor v, \eta \tensor w\}_{\fh, \Theta (\fh)} := \fh (\ii \Theta (\fh)_{\xi \bar \eta} v,w)
\]
and extended to a Hermitian form on $T^{1,0} _X \tensor E$ by the universal property for tensor products.

\begin{defn}\label{pos-defn}
Let $\fh$ be a smooth Hermitian metric.
\begin{enumerate}[i.]
\item For a positive integer $k$, we say that $\fh$ is \emph{$k$-positive} (resp. $k$-negative) if the Hermitian form $\{ \cdot , \cdot \}_{\fh, \Theta (\fh)}$ is positive (resp. negative) on all tensors of rank at most $k$.
\item Griffiths-positive (resp. negative) means $1$-positive (resp. negative).
\item Nakano-positive (resp. negative) means $k$-positive (resp. negative) for all $k > 0$.
\red
\end{enumerate}
\end{defn}

\noi Recall that the rank of a tensor $T \in A \tensor B$ is the rank (i.e., the dimension of the image) of the associated linear map $L_T : B^* \to A$.

Next we recall the interaction between curvature and duality.  Given a holomorphic vector bundle $E \to X$ equipped with a smooth Hermitian metric $\fh$, one has a smooth, fiber-preserving conjugate-linear map $\sR : E \to E^*$, which we call the \emph{Riesz map}, defined by 
\[
\sR (v) w := \fh (w,v).
\]
If we equip $E^*$ with the dual metric $\fh ^*$ then the Riesz map is a fiberwise conjugate-linear isometry, and more precisely satisfies 
\[
\fh ^* (\sR w,\sR v) = \fh (v,w).
\]
Taking a smooth local section $v$ and a holomorphic local section $w$ of $E$, we find that 
\[
\left < \dbar (\sR v),  w\right > = \dbar (\left < \sR (v), w\right > ) = \dbar (\fh (w,v)) = \fh (w, \nabla 'v) = \left < \sR (\nabla ' v), w\right >.
\]
From the computation
\begin{eqnarray*}
\fh ^* (\sR v, \sR \dbar w) + \fh ^*(\sR \nabla 'v, \sR w) &=& \fh (\dbar w, v) + \fh(w, \nabla 'v) =  \dbar \fh (w,v) = \dbar \fh^* (\sR v, \sR w) \\
&=& \fh ^*(\dbar \sR v, \sR w) + \fh ^*(\sR v, \nabla ' \sR w)
\end{eqnarray*}
we therefore have 
\[
\dbar \sR = \sR \nabla ' \quad \text{and} \quad \nabla ' \sR = \sR \dbar.
\]
Hence the curvature $\Theta (\fh) = \dbar \nabla ' + \nabla ' \dbar$ satisfies 
\begin{equation}\label{dual-curvatures-eqn}
\fh (\Theta (\fh) v,w) = \fh ^* (\sR w, \sR \Theta (\fh)v) = \fh ^* (\sR w, \Theta (\fh ^*) \sR v) = -  \fh ^* (\Theta (\fh ^*) \sR w, \sR v).
\end{equation}
We can thus conclude that $\fh$ is Griffiths positive if and only if $\fh ^*$ is Griffiths negative, but we cannot conclude the same for $k$-positivity as soon as $k \ge 2$.

The calculus identity 
\begin{equation}\label{bo-calculation}
\ii \di \dbar \fh (s, \sigma) = \fh (- \ii \Theta (\fh) s, \sigma) + \ii \fh (\nabla s, \nabla \sigma)
\end{equation}
for sections $s, \sigma \in \cO(E)_x$, yields the following analytic characterization of Griffiths positivity.  

\begin{prop}\label{griffiths-test-holo}
Let $E \to X$ be a holomorphic vector bundle with smooth Hermitian metric $\fh$.  Then the following are equivalent.
\begin{enumerate}[{\rm a.}]
\item $\fh$ is Griffiths negative.
\item For every $x \in X$ and every $s \in \cO (E)_x$ the function $\fh (s,s)$ is plurisubharmonic.
\item For every $x \in X$ and every $s \in \cO (E)_x$ the function $\log \fh (s,s)$ is plurisubharmonic.
\end{enumerate}
\end{prop}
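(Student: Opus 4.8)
The plan is to run the cyclic chain $\mathrm{(a)}\Rightarrow\mathrm{(c)}\Rightarrow\mathrm{(b)}\Rightarrow\mathrm{(a)}$, with the calculus identity \eqref{bo-calculation} doing essentially all the work. The first observation is that when $s$ is a holomorphic germ one has $\nabla s=\nabla' s$, an $E$-valued $(1,0)$-form, so that $\ii\fh(\nabla's,\nabla's)$ is a \emph{nonnegative} $(1,1)$-form (in local holomorphic coordinates, writing $\nabla's=\sum_j\sigma_j\,dz^j$, it is the form with Gram matrix $(\fh(\sigma_j,\sigma_k))$). Hence \eqref{bo-calculation} becomes
\[
\ii\di\dbar\,\fh(s,s)=\fh(-\ii\Theta(\fh)s,s)+\ii\fh(\nabla's,\nabla's),
\]
from which $\mathrm{(a)}\Rightarrow\mathrm{(b)}$ is already immediate; I will instead pass through $\mathrm{(c)}$. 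Setting $f:=\fh(s,s)$ and using $\di f=\fh(\nabla's,s)$ and $\dbar f=\fh(s,\nabla's)$ (valid since $s$ is holomorphic), the identity $\ii\di\dbar\log f=f^{-1}\ii\di\dbar f-f^{-2}\ii\di f\wedge\dbar f$ rearranges to
\[
\ii\di\dbar\log f=\frac1f\,\fh(-\ii\Theta(\fh)s,s)+\frac1{f^2}\Big(f\,\ii\fh(\nabla's,\nabla's)-\ii\fh(\nabla's,s)\wedge\fh(s,\nabla's)\Big).
\]
Contracting the parenthesized $(1,1)$-form with a tangent vector $\xi$ and putting $\sigma:=(\nabla's)_\xi$, its value is $\fh(s,s)\fh(\sigma,\sigma)-|\fh(\sigma,s)|^2\ge 0$ by the Cauchy--Schwarz inequality for $\fh$; hence $\ii\di\dbar\log f\ge f^{-1}\fh(-\ii\Theta(\fh)s,s)\ge 0$ when $\fh$ is Griffiths negative, which gives $\mathrm{(a)}\Rightarrow\mathrm{(c)}$ (with the usual convention at the zero set of $s$; one may also run the argument with $f+\varepsilon$ and let $\varepsilon\downarrow 0$ to avoid discussing the zeros at all).

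The implication $\mathrm{(c)}\Rightarrow\mathrm{(b)}$ is free, since $\fh(s,s)=\exp(\log\fh(s,s))$ and the exponential of a plurisubharmonic function is plurisubharmonic. For $\mathrm{(b)}\Rightarrow\mathrm{(a)}$ I would fix $x_0\in X$, $v\in E_{x_0}$ and $\xi\in T^{1,0}_{x_0}X$, choose a local holomorphic frame $e_1,\dots,e_r$ with connection form $\theta$, and produce a holomorphic germ $s=\sum_a s^a e_a$, each $s^a$ affine in local holomorphic coordinates centered at $x_0$, with $s(x_0)=v$ and $(\nabla's)(x_0)=0$ simultaneously; the latter amounts to prescribing $\di s^b(x_0)=-\sum_a s^a(x_0)\theta^b_a(x_0)$, which is possible since the value and the first-order part of each $s^b$ at $x_0$ may be chosen freely. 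For this germ the first displayed identity at $x_0$ collapses to $\ii\di\dbar\,\fh(s,s)(x_0)=\fh(-\ii\Theta(\fh)_{x_0}v,v)$, and plurisubharmonicity of $\fh(s,s)$ forces this $(1,1)$-form to be nonnegative; evaluated on $\xi$ this says $\{\xi\tensor v,\xi\tensor v\}_{\fh,\Theta(\fh)}\le 0$, which is Griffiths negativity.

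There is no serious obstacle here; the two points that need a little care are the nonnegativity of the correction term in the formula for $\ii\di\dbar\log f$, where Cauchy--Schwarz is the crucial input, and the elementary construction of a holomorphic germ with a prescribed $1$-jet adapted to the Chern connection in $\mathrm{(b)}\Rightarrow\mathrm{(a)}$. One should also note that this last step recovers only the non-strict inequality $\{\xi\tensor v,\xi\tensor v\}_{\fh,\Theta(\fh)}\le 0$, so the characterization is properly one of Griffiths \emph{semi}negativity; this is the sense in which the statement will be applied.
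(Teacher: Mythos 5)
Your proposal is correct, and it differs from the paper's proof in a way worth noting. The paper establishes the equivalence of (b) and (c) by citing Hörmander's Theorem 1.6.3 (which characterizes when the logarithm of a nonnegative function is plurisubharmonic), and then proves (a) $\Leftrightarrow$ (b) directly from \eqref{bo-calculation} together with the interpolation of an arbitrary vector of $E_x$ by a germ with $\nabla s(x)=0$. You instead run the cycle $\mathrm{(a)}\Rightarrow\mathrm{(c)}\Rightarrow\mathrm{(b)}\Rightarrow\mathrm{(a)}$, replacing the Hörmander citation with an explicit computation of $\ii\di\dbar\log\fh(s,s)$ in which the correction term is shown to be nonnegative by Cauchy--Schwarz. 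This is self-contained and slightly more informative, since it exhibits exactly where the extra positivity in (c) comes from; the paper's route is shorter but leans on an external reference. Your $\mathrm{(b)}\Rightarrow\mathrm{(a)}$ step is the same construction the paper uses (a germ with prescribed value and vanishing Chern covariant derivative at the base point). Your closing remark is also apt: since plurisubharmonicity only encodes the non-strict inequality $\ii\di\dbar\ge 0$, both your proof and the paper's really characterize Griffiths \emph{semi}-negativity, and the statement should be read with that understanding (or with ``negative'' replaced by ``nonpositive''). One very small point of hygiene: in $\mathrm{(a)}\Rightarrow\mathrm{(c)}$ you implicitly divide by $f=\fh(s,s)$, which vanishes where $s$ does; your parenthetical fix of running the argument with $f+\varepsilon$ and letting $\varepsilon\downarrow 0$ (or invoking that $\log f$ is $-\infty$ on the analytic zero set, which has measure zero, together with upper semicontinuity) is indeed enough.
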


\begin{proof}
The equivalence of (b) and (c) follows easily from \cite[Theorem 1.6.3]{hormander-book}.  To show that (a) and (b) are equivalent, take $s= \sigma$ in \eqref{bo-calculation}.  If $\Theta (\fh)$ is Griffiths negative then the right hand side is positive for all $s$, so $\fh(s,s)$ is plurisubharmonic.  Conversely if $\fh(s,s)$ is plurisubharmonic for all local holomorphic sections $s$ then in particular $\fh(s,s)$ is plurisubharmonic for all $s$ satisfying $\nabla s(x) = 0$ for some given $x$.  Since (i) Griffiths negativity/positivity is a pointwise condition and (ii) every vector in $E_x$ can be interpolated by a local holomorphic section with vanishing covariant derivative at $x$, we see that $\fh$ is Griffiths negative.
\end{proof}

In \cite{bo-annals} Berndtsson observed that one can similarly characterize $k$-negativity and $k$-positivity. To do so, we introduce some notation.  For a holomorphic coordinate system $z$ we define the smooth $(n-1, n-1)$-forms 
\begin{equation}\label{bo-forms}
\Upsilon ^{i \bar j}(z) := c_{i \bar j} dz ^1 \wedge ... \wedge \widehat{dz ^i} \wedge ...  \wedge d\bar z ^1 \wedge ... \wedge \widehat{d\bar z ^j} \wedge ... \wedge d\bar z ^i
\end{equation}
with $c_{i \bar j}$ a constant so that $\ii dz ^i \wedge d\bar z^i \wedge \Upsilon ^{i \bar j} = dV(z)$.

\medskip

Hereafter, the complex version of the Einstein summation convention is in force.

\medskip

We begin with the following slight generalization of Berndtsson's observation.

\begin{prop}\label{k-prop}
Let $E \to X$ be a holomorphic vector bundle with smooth Hermitian metric $\fh$.
\begin{enumerate}[{\rm i.}]
\item $\fh$ is $k$-negative if and only if for every local holomorphic coordinate system $(U, z)$ and every $k$-tuple of local holomorphic sections $f_1,..., f_k \in H^0(U, \cO(E))$ of $E$ the $(n,n)$-form 
\[
\ii \di \dbar \fh (f_i , f_j) \wedge \Upsilon ^{i \bar j} (z)
\]
is positive.
\item $\fh$ is $k$-positive at $x \in X$ if and only if for every local holomorphic coordinate system $(U,z)$ and every $k$-tuple of local holomorphic sections $f_1,..., f_k\in H^0(U, \cO(E))$ of $E$ satisfying $\nabla f_i (x) = 0$ the $(n,n)$-form 
\[
- \ii \di \dbar \fh (f_i , f_j) \wedge \Upsilon ^{i \bar j} (z)
\]
is positive at $x$.  
\end{enumerate}
\end{prop}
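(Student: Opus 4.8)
The plan is to deduce both parts from a single pointwise identity. Fix a holomorphic coordinate chart $(U,z)$ and holomorphic sections $f_1,\dots,f_k\in H^0(U,\cO(E))$, and write $\nabla f_i=\sum_{a=1}^n (\nabla_af_i)\,dz^a$, where $\nabla_af_i\in\cO(E)$ is the component of the $E$-valued $(1,0)$-form $\nabla f_i$ along $dz^a$. The key assertion is the identity
\begin{equation}\label{proposal-identity}
\sum_{i,j=1}^k\ii\di\dbar\fh(f_i,f_j)\wedge\Upsilon^{i\bar j}(z)=\left(\left|\sum_{i=1}^k\nabla_if_i\right|^2_\fh-\left\{\sum_{i=1}^k\di_{z^i}\tensor f_i\,,\,\sum_{j=1}^k\di_{z^j}\tensor f_j\right\}_{\fh,\Theta(\fh)}\right)dV(z),
\end{equation}
an equality of $(n,n)$-forms on $U$ in which, at a point $y\in U$, the tensor appearing in the second term on the right is $\sum_{i}\di_{z^i}|_y\tensor f_i(y)\in T^{1,0}_yX\tensor E_y$. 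To prove \eqref{proposal-identity} I would substitute, for each pair $(f_i,f_j)$, the calculus identity \eqref{bo-calculation}, namely $\ii\di\dbar\fh(f_i,f_j)=\fh(-\ii\Theta(\fh)f_i,f_j)+\ii\fh(\nabla f_i,\nabla f_j)$, into the left-hand side; each summand is a $(1,1)$-form. By the defining normalization \eqref{bo-forms} of the forms $\Upsilon^{i\bar j}$, the product $dz^a\wedge d\bar z^b\wedge\Upsilon^{i\bar j}(z)$ is a constant multiple of $dV(z)$ that vanishes unless $a=i$ and $b=j$; hence, after summing over $i,j\in\{1,\dots,k\}$, the gradient contribution collapses to $\sum_{i,j}\fh(\nabla_if_i,\nabla_jf_j)\,dV=\bigl|\sum_i\nabla_if_i\bigr|^2_\fh\,dV$, while the curvature contribution collapses, by sesquilinearity of $\{\cdot,\cdot\}_{\fh,\Theta(\fh)}$, to $-\{\sum_i\di_{z^i}\tensor f_i,\sum_j\di_{z^j}\tensor f_j\}_{\fh,\Theta(\fh)}\,dV$, which is the second term on the right of \eqref{proposal-identity} and is real, being the value of a Hermitian form on a single vector.

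Granting \eqref{proposal-identity}, one direction of (i) is immediate: if $\fh$ is $k$-negative then at every point $y\in U$ the tensor $\sum_i\di_{z^i}|_y\tensor f_i(y)$ has rank at most $k$, so the second term of \eqref{proposal-identity} is $\le 0$ at $y$; since the first term is everywhere $\ge 0$, the $(n,n)$-form in \eqref{proposal-identity} is positive on $U$. For the converse of (i) and for both directions of (ii) I would use a short reduction. Given $x\in X$ and a tensor $\tau\in T^{1,0}_xX\tensor E_x$ of rank $r\le k$, write $\tau=\sum_{i=1}^r\xi_i\tensor v_i$ with $\xi_1,\dots,\xi_r$ linearly independent in $T^{1,0}_xX$, extend to a basis $\xi_1,\dots,\xi_n$, and (composing a coordinate chart with a linear map) choose holomorphic coordinates $z$ centered at $x$ with $\di_{z^i}|_x=\xi_i$ for $1\le i\le r$. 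Using the interpolation property invoked in the proof of Proposition \ref{griffiths-test-holo}, choose holomorphic sections $f_i$ near $x$ with $f_i(x)=v_i$ and $\nabla f_i(x)=0$, and put $f_i\equiv 0$ for $r<i\le k$. For these data the first term of \eqref{proposal-identity} vanishes at $x$, while $\sum_i\di_{z^i}|_x\tensor f_i(x)=\tau$; hence \eqref{proposal-identity} says exactly that, at $x$,
\[
-\sum_{i,j=1}^k\ii\di\dbar\fh(f_i,f_j)\wedge\Upsilon^{i\bar j}(z)=\{\tau,\tau\}_{\fh,\Theta(\fh)}\,dV(z).
\]

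Since every tensor of rank at most $k$ at $x$ arises in this way, and is nonzero exactly when the $f_i$ do not all vanish at $x$, positivity at $x$ of the $(n,n)$-form in (ii) — for every coordinate system and every $k$-tuple with $\nabla f_i(x)=0$ — is equivalent to positivity of $\{\cdot,\cdot\}_{\fh,\Theta(\fh)}$ on all tensors of rank at most $k$ at $x$, that is, to $k$-positivity of $\fh$ at $x$; this proves (ii). Likewise, if the $(n,n)$-form in (i) is positive for every coordinate system and every $k$-tuple, then specializing at each $x$ to the data above (for which the gradient term vanishes at $x$) forces $\{\tau,\tau\}_{\fh,\Theta(\fh)}\le 0$ for every rank-$\le k$ tensor $\tau$ at $x$, i.e.\ $\fh$ is $k$-negative; this is the remaining half of (i).

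The only genuinely computational point is \eqref{proposal-identity}: one must track the constants $c_{i\bar j}$ of \eqref{bo-forms} carefully enough to see that the gradient term emerges with a nonnegative sign and that the curvature term emerges as exactly $-\{\cdot,\cdot\}_{\fh,\Theta(\fh)}$ rather than its negative. Once \eqref{bo-calculation} is in hand this is purely the combinatorics of wedging $(1,1)$-forms against the $\Upsilon^{i\bar j}(z)$, and the subsequent linear-algebra reduction is routine given the interpolation property of the Chern connection. I therefore expect the bookkeeping behind \eqref{proposal-identity} to be the main — indeed the only real — obstacle.
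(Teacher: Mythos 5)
Your proposal is correct and follows essentially the same route as the paper: both hinge on the identity \eqref{k-bo-formula} (your \eqref{proposal-identity}) derived by substituting \eqref{bo-calculation} into the sum $\sum_{i,j}\ii\di\dbar\fh(f_i,f_j)\wedge\Upsilon^{i\bar j}$, observing that the gradient contribution is nonnegative, and then exploiting the interpolation property of the Chern connection together with the characterization of rank-$\le k$ tensors as $\sum_i\di_{z^i}\tensor f_i(x)$. Your only genuine additions are a clean use of the notation $\{\cdot,\cdot\}_{\fh,\Theta(\fh)}$ from Definition \ref{pos-defn} and the explicit zero-padding $f_i\equiv 0$ for $r<i\le k$ to cover tensors of rank strictly less than $k$, a small precision the paper leaves implicit.
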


\begin{proof}
First note that a tensor $T \in T^{1,0}_{X,x} \tensor E_x$ has rank $k$ if and only if there is a coordinate system $(U,z)$ and local holomorphic sections $f_1,...,f_k \in H^0(U, \cO(E))$ such that 
\[
T = \tfrac{\di}{\di z^1} \tensor f_1(x)+ \cdots + \tfrac{\di}{\di z^k} \tensor f_k(x).
\]
From \eqref{bo-calculation} we compute that 
\begin{equation}\label{k-bo-formula}
\ii \di \dbar \fh (f_i, f_j) \Upsilon ^{i \bar j} =\left ( \sum _{i ,j=1} ^k- \fh (\Theta (\fh) _{i \bar j} f_i , f_j) + \fh \left ( \sum _i \nabla _i f_i , \sum _j \nabla _j f_j \right )\right ) dV(z).
\end{equation}
Item (ii) follows easily from \eqref{k-bo-formula} and the already-cited fact that for every $x \in X$, every vector in $E_x$ can be interpolated by a local holomorphic section of $E$ with vanishing covariant derivative at $x$.  

Turning to (i), note that while the second term on the right hand side of \eqref{k-bo-formula} is always positive, the first term on the right hand side of \eqref{k-bo-formula} is positive if and only if $\fh$ is $k$-negative.  Therefore if $\fh$ is $k$-negative then the left hand side of \eqref{k-bo-formula} is positive for all coordinate systems $z$ and all local holomorphic sections $f_1,..., f_k$.  Conversely, the left hand side of \eqref{k-bo-formula} is positive for all coordinate systems z and all local holomorphic sections $f_1,..., f_k$ then in particular it is positive for local holomorphic sections satisfying $\nabla f_i (x) = 0$ at some $x \in X$, so $\fh$ is $k$-negative.
\end{proof}

\begin{cor}\label{cone-observation}
Let $E \to X$ be a holomorphic vector bundle.
\begin{enumerate}[{\rm a.}]
\item The set of $k$-negative smooth metrics for $E$ is a convex cone with respect to addition of metrics.
\item The set of Griffiths positive smooth metrics for $E$ is a convex cone with respect to the addition
\[
\fh _1 \stackrel{*}{+} \fh _2 := \left ( \fh _1 ^* + \fh _2 ^* \right )^*.
\]
\end{enumerate}
\end{cor}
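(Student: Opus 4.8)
The plan is to read off both statements from the analytic characterizations in Proposition \ref{k-prop}, using the $\bbR$-linearity of the operator $\ii\di\dbar$, together with, for part (b), the observation following \eqref{dual-curvatures-eqn} that $\fh$ is Griffiths positive if and only if $\fh^*$ is Griffiths negative. The essential point is that Proposition \ref{k-prop}(i) expresses the $k$-negativity of $\fh$ as positivity of a family of $(n,n)$-forms that depend \emph{linearly} on $\fh$, and positivity of $(n,n)$-forms is evidently preserved under sums and under multiplication by positive constants.

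For part (a), let $\fh_1,\fh_2$ be $k$-negative smooth metrics for $E$ and let $a,b\ge 0$ with $a+b>0$. Since a nonnegative linear combination of positive-definite Hermitian forms, not all weights zero, is positive-definite, $\fh := a\fh_1+b\fh_2$ is again a smooth Hermitian metric for $E$. Fix a local holomorphic coordinate system $(U,z)$ and local holomorphic sections $f_1,\dots,f_k\in H^0(U,\cO(E))$. As $\fh(f_i,f_j)=a\fh_1(f_i,f_j)+b\fh_2(f_i,f_j)$ and $\ii\di\dbar$ is $\bbR$-linear,
\[
\ii\di\dbar\fh(f_i,f_j)\wedge\Upsilon^{i\bar j}(z)=a\,\big(\ii\di\dbar\fh_1(f_i,f_j)\wedge\Upsilon^{i\bar j}(z)\big)+b\,\big(\ii\di\dbar\fh_2(f_i,f_j)\wedge\Upsilon^{i\bar j}(z)\big),
\]
which by Proposition \ref{k-prop}(i) is a nonnegative combination of positive $(n,n)$-forms, hence positive; invoking Proposition \ref{k-prop}(i) once more, $\fh$ is $k$-negative. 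Since this covers both $\fh_1+\fh_2$ and all positive scalar multiples, the set of $k$-negative smooth metrics is a convex cone under addition of metrics.

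For part (b), I would use that the conjugate-linear duality $\fh\mapsto\fh^*$ is a bijection from the smooth Hermitian metrics for $E$ onto those for $E^*$ which carries the operation $\ds$ to ordinary addition of metrics — indeed $(\fh_1\ds\fh_2)^*=\fh_1^*+\fh_2^*$ by the definition of $\ds$ and the fact that the second dual is the identity — and which, by \eqref{dual-curvatures-eqn}, carries Griffiths positivity to Griffiths negativity. So if $\fh_1,\fh_2$ are Griffiths positive smooth metrics for $E$, then $\fh_1^*,\fh_2^*$ are Griffiths negative smooth metrics for $E^*$; by part (a) applied with $k=1$, any nonnegative combination $a\fh_1^*+b\fh_2^*$ (not both weights zero) is Griffiths negative; dualizing, $(a\fh_1^*+b\fh_2^*)^*$ is Griffiths positive. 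Taking $a=b=1$ gives closure of the Griffiths positive metrics under $\ds$, and closure under positive scalar multiples is immediate since multiplying a metric by a positive constant leaves the Chern curvature unchanged and merely rescales $\{\cdot,\cdot\}_{\fh,\Theta(\fh)}$ by that constant. Hence the set of Griffiths positive smooth metrics is a convex cone with respect to $\ds$.

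I do not expect a genuine obstacle: given Proposition \ref{k-prop}, part (a) reduces to the triviality that a sum of positive $(n,n)$-forms is positive, and part (b) is a formal consequence of part (a) together with the curvature–duality identity \eqref{dual-curvatures-eqn}. The only point deserving attention is the reason part (b) is restricted to Griffiths positivity: the duality $\fh\mapsto\fh^*$ interchanges Griffiths positivity and Griffiths negativity but, as remarked after \eqref{dual-curvatures-eqn}, fails to interchange $k$-positivity and $k$-negativity once $k\ge 2$, so there is no $k$-positive analogue of the argument in (b).
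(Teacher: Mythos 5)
Your proof is correct and is the argument the paper intends: part (a) is exactly the observation that the characterization of $k$-negativity in Proposition \ref{k-prop}(i) involves quantities $\ii\di\dbar\fh(f_i,f_j)\wedge\Upsilon^{i\bar j}$ that depend $\bbR$-linearly on $\fh$, and part (b) transports (a) with $k=1$ through the duality $\fh\mapsto\fh^*$ using that duality interchanges Griffiths positivity and negativity and converts $\ds$ into ordinary addition. Your closing remark about why (b) is restricted to Griffiths positivity matches the paper's own Remark \ref{no-cone-yo!}.
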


\begin{rmk}\label{no-cone-yo!}
On the other hand, since for $k \ge 2$ duality does not interchange $k$-negative smooth Hermitian metrics and $k$-positive smooth Hermitian metrics, we cannot argue $k$-positivity is preserved under harmonic-convex combinations.  In fact, we don't know if for $k \ge 2$ the notion $k$-positivity is closed under harmonic sum, or for that matter, under any useful additive structure on the space of smooth Hermitian metrics.
\red
\end{rmk}

\subsection{H\"ormander-Skoda-Demailly Estimates}\label{hsd-est-par}

H\"ormander's Theorem on the $L^2$ estimates for solutions of the $\dbar$-equation is among the most widely known results in several complex variables.  These estimates were improved by Skoda, and further extended by Demailly.  However, there are some degenerate cases that are less widely known.  Therefore for the sake of establishing notation, we review the story here.

\subsubsection*{\sc Hermitian Endomorphisms}

Let $X$ be a complex manifold equipped with a Hermitian (i.e., $J$-invariant) Riemannian metric $g$ whose metric form we denote $\omega$, and let $E \to X$ be a holomorphic vector bundle equipped with a smooth Hermitian metric $\fh$.  A smooth Hermitian section $\Omega$ of $\Lambda ^{1,1} _X \tensor {\rm Hom}(E,E) \to X$ acts on an $E$-valued $(p,q)$-form $\alpha$ by the formula given in local coordinates $z = (z^1,..., z^n)$ and a holomorphic frame $\mathbf{e}_1, ..., \mathbf{e}_r$ (with dual frame $\mathbf{e}^{*1},...,\mathbf{e}^{*r}$) as
\begin{equation}\label{endo-action}
\Omega ^{\sharp _g} \left ( \alpha ^{\mu} _{I \bar J} dz ^I \wedge d\bar z ^J \tensor \mathbf{e} _{\mu} \right ) := g^{i \bar \ell} \Omega ^{\mu} _{\nu i \bar j_{\gamma} } \alpha^{\mu}  _{I \bar j_1 ... (\bar \ell) _{\gamma} ... \bar j_q} dz^I \wedge d\bar z ^J \tensor \mathbf{e} _{\mu},
\end{equation}
where $g = g_{i \bar j} dz ^i \cdot d\bar z ^j$, $(g^{i\bar j})$ is the inverse matrix of $(g_{i\bar j})$ and $\Omega = \Omega ^{\mu} _{\nu i\bar j} dz ^i \wedge d\bar z ^j \tensor \mathbf{e}_{\mu} \tensor \mathbf{e} ^{*\nu}$.  The notation $(a)_k$ means that the $k^{\text{th}}$ entry is replaced by $a$.  (Formula \eqref{endo-action} appears in the Bochner-Kodaira Identity; see \cite[Page 63]{siu-harmonic}.)

\begin{defn}\label{pos-endos-defn}
We say that a Hermitian section $\Omega$ of $\Lambda ^{1,1} _X \tensor {\rm Hom}(E,E) \to X$ is \emph{$k$-positive at $x \in X$} if the associated Hermitian form $\{ \cdot, \cdot \}_{\fh, \Omega}$ defined on rank-1 tensors in $T^{1,0}_{X,x} \tensor E_x$ by 
\[
\{ \xi \tensor v, \eta \tensor w\}_{\fh, \Omega} := \fh ( \Omega _{\xi \bar \eta} v, w)
\]
is positive definite on all tensors of Rank $k$.
\red
\end{defn}

\begin{s-rmk}
Hermitian symmetry and positivity of $\Omega$ are independent of the metric $\fh$.
\red
\end{s-rmk}

Demailly proved the following result.

\begin{prop}[\text{\cite[Lemma VII(7.2)]{dem-book}}]\label{dem-pos-calc}
Let $X$ be a complex manifold of dimension $n$ and let $E \to X$ be a holomorphic vector bundle of rank $r$.  Fix integers $q$ and $k$ such that $q \ge 1$ and $k\ge \min (n-q+1,r)$.  If a Hermitian endomorphism $\Omega \in H^0(X, \sC^{\infty} (\Lambda ^{1,1}_X \tensor {\rm Hom}(E,E)))$ is $k$-positive then for any Hermitian Riemannian metric $g$ on $X$ the pointwise Hermitian operator 
\[
\Omega ^{\sharp_g} : \Lambda ^{n,q} _X \tensor E \to \Lambda ^{n,q} _X \tensor E
\]
is positive-definite.
\end{prop}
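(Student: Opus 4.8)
The plan is to prove the assertion pointwise, at a fixed $x \in X$, where it becomes a piece of linear algebra that is essentially the Bochner--Kodaira--Nakano rearrangement. First I would choose holomorphic coordinates $z = (z^1, \dots, z^n)$ centered at $x$ that are normal for $g$, so that $g_{i \bar j}(x) = \delta_{ij}$, together with an $\fh$-orthonormal holomorphic frame $\mathbf{e}_1, \dots, \mathbf{e}_r$ for $E$ near $x$; with these choices the natural Hermitian inner product on $\Lambda^{n,q}_X \tensor E$ at $x$ induced by $g$ and $\fh$ is the standard one in the given coordinates and frame. Writing an $E$-valued $(n,q)$-form as $\alpha = dz^1 \wedge \cdots \wedge dz^n \wedge \left( \sum_{|J|=q,\ \mu} \alpha^\mu_J\, d\bar z^J \tensor \mathbf{e}_\mu \right)$, I would substitute \eqref{endo-action} into $\langle \Omega^{\sharp_g}\alpha, \alpha\rangle_{\fh}$ and expand.

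The key step is to recognize, after this expansion, that the resulting Hermitian form decomposes as a sum over $(q-1)$-multi-indices. For each increasing $(q-1)$-multi-index $S$ put
\[
v_S := \sum_{a \notin S}\ \sum_{\mu} \varepsilon(a,S)\, \alpha^\mu_{\{a\} \cup S}\ \tfrac{\di}{\di z^a} \tensor \mathbf{e}_\mu\ \in\ T^{1,0}_{X,x} \tensor E_x,
\]
where $\varepsilon(a,S)$ is the sign of the permutation reordering $\{a\}\cup S$ increasingly. Using $g^{i \bar \ell}(x) = \delta_{i\ell}$ to collapse the contraction in \eqref{endo-action}, and the fact that in the $\fh$-orthonormal frame $\fh(\Omega_{\di_a \bar \di_b}\mathbf{e}_\nu, \mathbf{e}_\mu) = \Omega^\mu_{\nu a \bar b}(x)$, one checks (up to a positive constant depending only on $n$ and $q$ from the chosen normalizations) that
\[
\langle \Omega^{\sharp_g}\alpha, \alpha\rangle_{\fh} = \sum_{|S|=q-1} \{ v_S, v_S\}_{\fh, \Omega},
\]
with $\{\cdot,\cdot\}_{\fh,\Omega}$ the Hermitian form of Definition \ref{pos-endos-defn}. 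Tracking the permutation signs so that the cross terms assemble into the single Hermitian form $\{v_S, v_S\}$ rather than into an indefinite expression is the one genuinely fiddly point, and it is exactly the content of the Bochner--Kodaira--Nakano identity specialized to $(n,q)$-forms, where the holomorphic multi-index is full and only the antiholomorphic index is rearranged.

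The rest is quick. Since $S$ has $q-1$ entries, $v_S$ is a sum of at most $n-(q-1) = n-q+1$ decomposable tensors $\tfrac{\di}{\di z^a}\tensor w_a$, so $\operatorname{rank}(v_S) \le n-q+1$; on the other hand the linear map $E_x^* \to T^{1,0}_{X,x}$ associated to $v_S$ factors through an $r$-dimensional space, so $\operatorname{rank}(v_S) \le r$ as well. Hence $\operatorname{rank}(v_S) \le \min(n-q+1,r) \le k$. Since $\Omega$ is $k$-positive, i.e. $\{\cdot,\cdot\}_{\fh,\Omega}$ is positive definite on tensors of rank at most $k$, every summand $\{v_S, v_S\}_{\fh,\Omega}$ is $\ge 0$, and therefore so is $\langle \Omega^{\sharp_g}\alpha, \alpha\rangle_{\fh}$; moreover, if this vanishes then each $\{v_S, v_S\}_{\fh,\Omega} = 0$, which by positive-definiteness on rank-$\le k$ tensors forces every $v_S = 0$, hence $\alpha = 0$. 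Thus $\Omega^{\sharp_g}$ is positive definite on $\Lambda^{n,q}_X \tensor E$ at $x$, and since $x \in X$ was arbitrary, the proof is complete. The main obstacle, as indicated, is the sign and index bookkeeping in the middle step; the rank estimate and the conclusion follow immediately from it.
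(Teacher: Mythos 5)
The paper does not prove this statement; it cites Demailly \cite[Lemma VII(7.2)]{dem-book} and moves on, so there is no in-paper proof to compare against. Your argument is correct and is the standard one: the pointwise identity $\langle\Omega^{\sharp_g}\alpha,\alpha\rangle_{\fh}=\sum_{|S|=q-1}\{v_S,v_S\}_{\fh,\Omega}$ (in normal coordinates and an $\fh$-orthonormal frame, up to the normalizing constant) is exactly the Bochner--Kodaira rearrangement for $(n,q)$-forms, the rank bound $\operatorname{rank}(v_S)\le\min(n-q+1,r)\le k$ holds since $v_S$ has at most $n-q+1$ decomposable summands and its associated map has domain of dimension $r$, and positive-definiteness follows because vanishing of every $\{v_S,v_S\}_{\fh,\Omega}$ forces each $v_S=0$, which upon reading off the $\partial_{j_\gamma}\otimes\mathbf e_\mu$ component gives $\alpha^\mu_J=0$ for all $J,\mu$.
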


\begin{comment}
\begin{s-rmk}
It is perhaps worth mentioning that the converse of Proposition \ref{dem-pos-calc} is in general not true.  For example, suppose $E$ is a line bundle, i.e., $r=1$ and ${\rm Hom}(E,E)$ is trivial.  Then Proposition \ref{dem-pos-calc} tells us that if $\Omega$ is a positive $(1,1)$-form then $\Omega ^{\sharp _g}$ is positive.  However, conversely if $q$ is fixed then $\Omega ^{\sharp _q}$ is positive if and only if the sum of any $q$ eigenvalues of $\Omega$ with respect to $g$ is positive.  (In particular, positivity depends on the choice of metric unless $q =1$.)  Twisting by a vector bundle of higher rank is more complicated.
\red
\end{s-rmk}
\end{comment}

Together with standard $L^2$ techniques, Proposition \ref{dem-pos-calc} yields the following well-known result.

\begin{thm}[H\"ormander, Skoda, Demailly]\label{skoda-est}
Let $X$ be a complete K\"ahler manifold equipped with a possibly non-complete K\"ahler metric $g$, let $E \to X$ be a holomorphic vector bundle of rank $r$ and let $\fh$ be a smooth Hermitian metric for $E$.  Fix integers $q \ge 1$ and $k \ge \min (n-q+1, r)$, and let $\Omega \in H^0(X, \sC^{\infty} (\Lambda ^{1,1} _X \tensor {\rm Hom} (E,E)))$ be $k$-positive.  If $\ii \Theta (\fh) -\Omega$ is $k$-nonnegative then for any measurable $E$-valued $(n,q)$-form $\beta$ such that 
\[
\dbar \beta = 0 \quad \text{and} \quad \int _X \left <( \Omega ^{\sharp _q})^{-1}\beta, \beta \right > _{\fh , g}< +\infty
\]
there exists a measurable $E$-valued $(n,q-1)$-form $u$ such that 
\[
\dbar u = \beta \quad \text{and} \quad \int _X \left <u,u\right >_{\fh ,g} \le \int _X \left <( \Omega ^{\sharp _q})^{-1}\beta, \beta \right >_{\fh, g}.
\]
\end{thm}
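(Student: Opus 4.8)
The plan is to run H\"ormander's duality method for the $\dbar$-equation on $(n,q)$-forms, using Proposition \ref{dem-pos-calc} to turn the $k$-positivity hypotheses into positivity of the operators that actually appear, and to absorb the possible incompleteness of $g$ at the very end by Demailly's regularization. Throughout, write $\Omega^{\sharp_q}$ for the endomorphism $\Omega^{\sharp_g}$ of \eqref{endo-action} restricted to $\Lambda^{n,q}_X \tensor E$; since $q \ge 1$ and $k \ge \min(n-q+1,r)$, Proposition \ref{dem-pos-calc} shows that $\Omega^{\sharp_q}$ is positive-definite, so $(\Omega^{\sharp_q})^{-1}$ is defined and locally bounded, and in particular $\beta \in L^2_{\mathrm{loc}}$.

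First I would treat the case in which $g$ is complete. Since $g$ is K\"ahler the Bochner--Kodaira--Nakano identity holds, and since $g$ is complete the Andreotti--Vesentini density lemma permits running it on all of $\mathrm{Dom}(\dbar) \cap \mathrm{Dom}(\dbar^*)$. Recalling that the curvature term $[\ii\Theta(\fh),\Lambda]$ of that identity is precisely the operator \eqref{endo-action} attached to $\ii\Theta(\fh)$, one obtains, for every $\dbar$-closed $E$-valued $(n,q)$-form $\alpha$ in the domains of $\dbar$ and $\dbar^*$,
\[
\int_X \left< (\ii\Theta(\fh))^{\sharp_g}\alpha, \alpha \right>_{\fh,g} \le \|\dbar^*\alpha\|^2 .
\]
Because $\ii\Theta(\fh) - \Omega$ is $k$-nonnegative with $k \ge \min(n-q+1,r)$, applying Proposition \ref{dem-pos-calc} to $\ii\Theta(\fh) - \Omega + \varepsilon\,\omega \tensor \mathrm{Id}_E$ and letting $\varepsilon \downarrow 0$ gives $(\ii\Theta(\fh))^{\sharp_g} \ge \Omega^{\sharp_q}$ on $(n,q)$-forms, so the inequality improves to $\int_X \langle \Omega^{\sharp_q}\alpha,\alpha\rangle \le \|\dbar^*\alpha\|^2$ for every such $\alpha$. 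For the duality step, fiberwise Cauchy--Schwarz for the positive-definite $\Omega^{\sharp_q}$ gives $|\langle \beta,\alpha\rangle_{\fh,g}|^2 \le \langle (\Omega^{\sharp_q})^{-1}\beta,\beta\rangle_{\fh,g}\,\langle \Omega^{\sharp_q}\alpha,\alpha\rangle_{\fh,g}$ pointwise; integrating, applying Cauchy--Schwarz again and then the previous estimate yields, for every $\dbar$-closed $\alpha \in \mathrm{Dom}(\dbar^*)$,
\[
|(\beta,\alpha)|^2 \le \left( \int_X \langle (\Omega^{\sharp_q})^{-1}\beta,\beta\rangle \right) \|\dbar^*\alpha\|^2 .
\]
(It is enough to have this for $\dbar$-closed $\alpha$, by the usual orthogonal-decomposition reduction.) The standard functional-analytic existence lemma for $\dbar$, applied to the closed densely defined operators $\dbar : L^2_{n,q-1}(X,E) \to L^2_{n,q}(X,E)$ and $\dbar : L^2_{n,q}(X,E) \to L^2_{n,q+1}(X,E)$ and to the $\dbar$-closed form $\beta$, then produces $u$ with $\dbar u = \beta$ and $\|u\|^2 \le \int_X \langle (\Omega^{\sharp_q})^{-1}\beta,\beta\rangle$. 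This settles the complete case.

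Finally, for general $g$, fix a complete K\"ahler metric $\hat\omega$ on $X$ and set $\omega_\varepsilon := \omega + \varepsilon\hat\omega$, which is complete K\"ahler for every $\varepsilon > 0$. The curvature hypotheses on $\fh$ and $\Omega$ involve no metric on $X$, so the completed case applies verbatim on each $(X,\omega_\varepsilon)$. The one genuinely delicate point is the bookkeeping of $L^2$ densities under enlarging the base metric, which is exactly the analysis carried out in \cite{dem-book}: for $(n,q)$-forms the density $\langle (\Omega^{\sharp})^{-1}\beta,\beta\rangle\, dV$ is non-increasing, and for $(n,q-1)$-forms the density $|u|^2\, dV$ is non-increasing, as the base metric grows. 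The first fact gives $\int_X \langle (\Omega^{\sharp_{\omega_\varepsilon}})^{-1}\beta,\beta\rangle_{\omega_\varepsilon}\, dV_{\omega_\varepsilon} \le \int_X \langle (\Omega^{\sharp_q})^{-1}\beta,\beta\rangle < +\infty$ for every $\varepsilon$, so the complete case yields $u_\varepsilon$ with $\dbar u_\varepsilon = \beta$ and $\|u_\varepsilon\|^2_{\omega_\varepsilon} \le \int_X \langle (\Omega^{\sharp_q})^{-1}\beta,\beta\rangle$; by the second fact, fixing $\varepsilon_0$ makes $\{u_\varepsilon\}_{0 < \varepsilon < \varepsilon_0}$ bounded in $L^2_{n,q-1}(X,E)$ with respect to $\omega_{\varepsilon_0}$. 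A diagonal extraction over $\varepsilon_0 \downarrow 0$ then produces a weak limit $u$ with $\dbar u = \beta$ and $\|u\|^2_{\omega_{\varepsilon_0}} \le \int_X \langle (\Omega^{\sharp_q})^{-1}\beta,\beta\rangle$ for every $\varepsilon_0$, and letting $\varepsilon_0 \downarrow 0$ (monotone convergence) gives $\int_X \langle u,u\rangle_{\fh,g} \le \int_X \langle (\Omega^{\sharp_q})^{-1}\beta,\beta\rangle_{\fh,g}$, as asserted. I expect this regularization bookkeeping to be the main obstacle: the Bochner--Kodaira--Nakano inequality, the two Cauchy--Schwarz estimates, and the functional-analytic existence lemma are all routine once Proposition \ref{dem-pos-calc} is in hand.
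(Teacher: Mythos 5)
The paper itself does not prove Theorem \ref{skoda-est}; it records it as a well-known consequence of Proposition \ref{dem-pos-calc} combined with ``standard $L^2$ techniques,'' citing H\"ormander, Skoda and Demailly. Your argument --- the Bochner--Kodaira--Nakano inequality for $\dbar$-closed $(n,q)$-forms on a complete K\"ahler metric via Andreotti--Vesentini density, the $\varepsilon$-perturbation to pass from $k$-nonnegativity of $\ii\Theta(\fh)-\Omega$ to $(\ii\Theta(\fh))^{\sharp_q}\ge\Omega^{\sharp_q}$ through Proposition \ref{dem-pos-calc}, the pointwise and integral Cauchy--Schwarz estimates, H\"ormander's functional-analytic existence lemma, and Demailly's monotonicity lemmas to remove completeness --- is a correct and essentially canonical implementation of exactly those techniques, so it matches the approach the paper has in mind.
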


\subsubsection*{\sc The case of $(n,1)$-forms}

Let $X$ be a complete K\"ahler manifold equipped with a (possibly not complete) K\"ahler metric $g$ and let  $E \to X$ be a holomorphic vector bundle equipped with a smooth Hermitian metric $\fh$ having non-negative curvature in the sense of Nakano.  Theorem \ref{skoda-est} (with $\Omega = \Theta (\fh)$) states that for every $\dbar$-closed $K_X \tensor E$-valued $(0,1)$-form $\beta$ such that 
\begin{equation}\label{skoda-h-norm}
\int _X\left < \left (\Theta (\fh) ^{\sharp_g}\right )^{-1} \beta , \beta\right >_{\fh,g} dV_g < +\infty
\end{equation}
there exists a measurable (and smooth if $\beta$ is smooth) section $u$ of $K_X \tensor E \to X$ such that 
\begin{equation}\label{u-est}
\dbar u = \beta \quad \text{and} \quad \int _X |u|^2_{\fh} dV_g  \le  \int _X\left < \left (\Theta (\fh) ^{\sharp_g}\right )^{-1} \beta , \beta\right >_{\fh,g} dV_g.
\end{equation}

Let $\sL_o$ denote the space of all measurable sections $F$ of $K_X \tensor E\to X$ whose $L^2$-norm 
\[
||F|| := \left (\ii ^{n^2}  \int_X \left < F \wedge \bar F , \fh\right > \right )^{1/2}
\]
is finite.  The subspace $\sH_o:= \sL_o\cap H^0(X, \cO(K_X \tensor E))$ of $\sL_o$ is closed, as is $\sH _o ^{\perp}$.  Let 
\[\label{bergman-proj}
P_o : \sL _o \to \sH _o \quad \text{and} \quad N_o := {\rm Id}- P_o : \sL_o \to \sH_o^{\perp}
\]
denote the orthogonal projections.  Evidently if $F \in \sL_o$ then $N_oF$ is orthogonal to $\sH_o$.  Thus $N_oF$ is the minimal solution of the equation $\dbar u = \dbar F$.  If $\beta := \dbar F$ also satisfies \eqref{skoda-h-norm} then the estimate \eqref{u-est} yields the following result.

\begin{thm}\label{hormander-estimate}
Assume that the metric $\fh$ has non-negative curvature in the sense of Nakano.  Then for any $F\in {\rm Domain} (\dbar) \subset \sL_o$ one has the estimate 
\[
\int _X |N_oF|^2_{\fh} \le   \int _X\left <\left (\Theta (\fh) ^{\sharp_g}\right )^{-1}\dbar F , \dbar F \right >_{g,\fh} ,
\]
provided the right hand side is finite.
\end{thm}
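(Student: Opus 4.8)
The plan is to recognize $N_oF$ as the $L^2$-minimal solution of the $\dbar$-equation $\dbar v = \dbar F$ and then bound that minimal solution with the estimate \eqref{u-est}. First I would fix $F \in {\rm Domain}(\dbar) \subset \sL_o$ and set $\beta := \dbar F$. Since $P_oF$ is a holomorphic section lying in $\sL_o$, it belongs to ${\rm Domain}(\dbar)$ with $\dbar(P_oF) = 0$; hence $N_oF = F - P_oF \in {\rm Domain}(\dbar)$ and $\dbar(N_oF) = \dbar F = \beta$. Moreover $\dbar\beta = \dbar\dbar F = 0$, so $\beta$ is a $\dbar$-closed, square-integrable, $K_X \tensor E$-valued $(0,1)$-form.

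Next, assume the right-hand side $\int_X \langle (\Theta(\fh)^{\sharp_g})^{-1}\dbar F, \dbar F\rangle_{\fh,g}$ is finite; this is precisely condition \eqref{skoda-h-norm} applied to $\beta$. Then \eqref{u-est} — the Nakano-nonnegative specialization of Theorem \ref{skoda-est} (with $\Omega = \Theta(\fh)$) recorded just above — produces a measurable section $u$ of $K_X \tensor E \to X$ with $\dbar u = \beta$ and
\[
\int_X |u|^2_\fh \le \int_X \left\langle \left(\Theta(\fh)^{\sharp_g}\right)^{-1}\beta, \beta\right\rangle_{\fh,g} < +\infty,
\]
so in particular $u \in \sL_o$.

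Finally I would compare the two solutions $N_oF$ and $u$ of $\dbar v = \beta$. Their difference $w := u - N_oF$ is $\dbar$-closed and lies in $\sL_o$; since a $\dbar$-closed square-integrable section of $K_X \tensor E$ is holomorphic, $w \in \sH_o$. Recalling that $N_oF$ is orthogonal to $\sH_o$, the decomposition $u = N_oF + w$ is orthogonal, whence $\int_X |N_oF|^2_\fh = \|N_oF\|^2 \le \|N_oF\|^2 + \|w\|^2 = \|u\|^2 = \int_X |u|^2_\fh$. Chaining this with the displayed inequality for $u$ gives $\int_X |N_oF|^2_\fh \le \int_X \langle (\Theta(\fh)^{\sharp_g})^{-1}\dbar F, \dbar F\rangle_{\fh,g}$, which is the assertion.

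I do not expect a genuine obstacle in this argument: once \eqref{u-est} is available, the proof is a two-line Pythagorean estimate, and the remaining ingredients — that holomorphic sections lie in ${\rm Domain}(\dbar)$ and that $\dbar$-closed $L^2$ top-degree forms are holomorphic — are standard elliptic facts. The only place where the Nakano-nonnegativity hypothesis is actually consumed is \eqref{u-est} itself, which one obtains from Theorem \ref{skoda-est} applied to the Nakano-positive endomorphisms $\Theta(\fh) + \varepsilon\,\omega \tensor {\rm Id}_E$, followed by a weak limit as $\varepsilon \downarrow 0$; the finiteness of the right-hand side is exactly what keeps the limiting solution square-integrable. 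If a fully self-contained treatment is wanted, that regularization is the only step requiring care.
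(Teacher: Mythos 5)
Your proof is correct and follows the same route as the paper: recognize $N_oF$ as the $L^2$-minimal solution of $\dbar u = \dbar F$, invoke the Skoda--Demailly estimate \eqref{u-est} to produce some solution $u$ with the desired bound, and conclude by the Pythagorean inequality since $u - N_oF \in \sH_o$ while $N_oF \perp \sH_o$. Your closing remark about regularizing $\Theta(\fh)$ by $\Theta(\fh) + \varepsilon\,\omega \tensor {\rm Id}_E$ when the curvature is merely Nakano-nonnegative is a fair observation of a step the paper glosses over, but it does not change the argument's structure.
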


\subsection{Tautological vector bundles}

\subsubsection{\sf The tautological line bundle}\label{O1-par}

Let $V$ be a complex vector space of complex dimension $r$.  We denote by $V^*$ the dual complex vector space, and by $\bbP(V^*)$ the projectivization of $V^*$, i.e., the set of $1$-dimensional subspaces of $V^*$.  The tautological line bundle $\pi : \cO (-1) \to \bbP(V^*)$ assigns to each $\ell \in \bbP(V^*)$ the $1$-dimensional vector space $\ell \subset V^*$.  The dual line bundle $\cO (1) := \cO (-1) ^*$ is called the hyperplane line bundle, and one sets $\cO (\pm k) := \cO (\pm 1) ^{\tensor k}$ for all $k \in \bbN$.

 Elements of the space $V^{**} = V$ of linear functions on $V^*$ can be seen as  holomorphic sections of $\cO (1) \to \bbP(V^*)$, the dual of $\cO (-1) \to \bbP(V^*)$.  It is easy to show that conversely every section of $\cO (1) \to \bbP(V^*)$ arises in this way, i.e.,  
 \begin{equation}\label{o1-glob-eq}
 H^0(\bbP(V^*), \cO (1)) = V.
 \end{equation}
If $V$ is equipped with a Hermitian inner product $\fg$ then one has a smooth Hermitian metric $e^{\vp _{\fg}}$ for $\cO (-1) \to \bbP(V^*)$ that assigns to a vector $v \in \cO (-1) _{\ell}\cong \ell \subset V^*$ its $\fg^*$-square length 
\[
|v|^2e^{\vp _{\fg}} := \fg^* (v,v),
\]
where $\fg ^*$ is the dual Hermitian inner product.  The dual metric $e^{-\vp_{\fg}}$ for $\cO (1) \to \bbP(V^*)$ is called the Fubini-Study metric associated to $\fg$, or simply the $\fg$-Fubini-Study metric.  Its curvature $\di \dbar \vp _{\fg}$ induces a K\"ahler metric 
\[
\omega _{\fg} := \ii \di \dbar \vp _{\fg}
\]
on $\bbP(V^*)$, unfortunately also called the ($\fg$-)Fubini-Study metric.  If $\{\mathbf{e_1}, ... , \mathbf{e}_r\}$ is a $\fg^*$-orthonormal basis for $V^*$ then on the affine chart $U_o := \{ [t^i \cdot \mathbf{e}_i]\ ;\ t^1 = 1\}$ we have 
\[
\omega _{\fg}[t^i \mathbf{e}_i] = \ii \di \dbar \log (1+ |t'|^2), \quad [t^i \mathbf{e}_i] \in U_o,
\]
where $t' = (t^2,..., t^r)$.  The form $\omega _{\fg}$ is independent of the choice of orthonormal basis $\{\mathbf{e}_1, ... , \mathbf{e}_r\}$, and hence is globally defined.  However, $\omega _{\fg}$ does depend on $\fg$ to some extent.  

\subsubsection{\sf The universal quotient bundle}\label{UQB-par}

Let $V$ be a finite-dimensional vector space of dimension $r \ge 2$, and let $\fh$ be a Hermitian inner product for $V$.  Over the projective space $\bbP (V^*)$ one has the exact sequence of vector bundles 
\begin{equation}\label{univ-seq}
0 \to \cO (-1) \to \bbV^* \stackrel{q}{\to} Q \to 0,
\end{equation}
where $\bbV^*$ denotes the trivial holomorphic vector bundle over $\bbP (V^*)$ with fiber $V^*$.  For $x \in \bbP (V^*)$, each element $q(v) \in Q_x$ is an equivalence class of vectors $v \in V^*$, where 
\[
q(v) =q(w) \iff v - w \in x.
\]
We equip $\bbV^*$ with the `constant' Hermitian metric induced by $\fh^*$, and give $Q$ the quotient Hermitian metric $\fh _Q$, defined by 
\[
\fh _Q(q(v), q(v)) := \inf \left \{ \fh^* (w,w)\ ;\  q(v)=q(w) \right \}.
\]
Equivalently, each equivalence class $q(v)\in Q_x$ intersects the $\fh^*$-orthogonal complement $x^{\perp}$ of $x$ in $V^*$ at a unique point $v_x$, and 
\[
\left | q(v)\right |^2 = \fh^* (v_x, v_x).
\]
In fact, $v_x = q^{\dagger}(q(v))$, where $q^{\dagger} :Q \to \bbV^*$ is defined by 
\[
\fh^* (q^{\dagger} (q(v)), w) = \fh _Q (q(v), q(w)).
\]
Indeed, $q^{\dagger} :Q \to \bbV^*$ is clearly injective, and if $w \in x$ then 
\[
\fh^* (q^{\dagger}(q(v)), w) = \fh _Q (q(v), q(w)) = \fh _Q (q(v), 0)= 0,
\]
which shows that $q^{\dagger}q(v_x) \in x^{\perp}$.  In other words, $q^{\dagger} : Q \to \bbV^*$ gives the Hermitian splitting of \eqref{univ-seq}.

Finally, we recall without proof the well-known identity
\[
Q\tensor \cO (1) = T_{\bbP (V^*)}.
\]
(See, for example, \cite[Proposition V.15.7, page 279]{dem-book}.)

\subsubsection{\sf Holomorphic families of vector spaces}\label{O1-of-E-par}

One can adapt the constructions of the Paragraph \ref{O1-par} to the setting of locally trivial families of vector spaces.  Let $E \to X$ be a holomorphic vector bundle.  The fiberwise-projectivization of $E \to X$ yields a locally trivial projective fibration $p : \bbP(E^*) \to X$, as well as holomorphic line bundles 
\[
\cO_E(-1) \to \bbP(E^*) \quad \text{and} \quad \cO _E(1) := \cO_E(-1)^*,
\]
defined fiber-by-fiber as above.  There is also a relationship between sections of $E \to X$ and sections of $\cO _E(1) \to \bbP(E^*)$ analogous to the isomorphism \eqref{o1-glob-eq}, which we state as follows.

\begin{prop}\label{section-id-prop}
Let $\pi :E \to X$ be a holomorphic vector bundle.  The correspondence 
\[
\Gamma (X, E) \ni f \mapsto \hat f \in \Gamma (\bbP(E^*), \cO _E(1))
\]
defined by 
\[
\left < \hat f ([v]), v \right > := \left < f( \pi v), v \right >
\]
is a bijection between sections of $E \to X$ and sections of $\cO _E(1) \to \bbP(E^*)$ that are holomorphic along the fibers of $\bbP(E^*) \to X$.  Moreover, the section $\hat f$ is holomorphic on $p ^{-1} (U) \subset \bbP(E^*)$ if and only if $f$ is holomorphic on $\pi ^{-1} (U)$.  In particular, 
\[
H^0(X, \cO (E)) \ni f \longleftrightarrow \hat f \in H^0(\bbP(E^*), \cO _E (1))
\]
is an isomorphism.
\end{prop}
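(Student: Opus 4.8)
The plan is to reduce the whole statement to the fiberwise identity \eqref{o1-glob-eq} and then promote it along the base by working in local trivializations. First I would check that the displayed formula $\langle \hat f([v]),v\rangle := \langle f(\pi v),v\rangle$ really does define a section of $\cO_E(1)\to\bbP(E^*)$. By construction $\cO_E(-1)_{[v]}$ is the line $\bbC v\subset E^*_{\pi v}$ and $\cO_E(1)_{[v]}$ is its dual, so a functional on that line is pinned down by its value on the single vector $v$; the only thing to verify is that replacing $v$ by $\mu v$ yields the same functional, which is immediate from bilinearity of the natural pairing $E\times E^*\to\bbC$. Restricting $\hat f$ to a fiber $\bbP(E^*_x)=\bbP((E_x)^*)$, the formula shows that $\hat f|_{\bbP(E^*_x)}$ is precisely the holomorphic section of $\cO(1)\to\bbP(E^*_x)$ attached to the vector $f(x)\in E_x=(E^*_x)^{**}$ under \eqref{o1-glob-eq}; in particular $\hat f$ is holomorphic along the fibers, so $f\mapsto\hat f$ does land in the claimed target.

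Bijectivity is then purely fiberwise. For injectivity, if $\hat f\equiv 0$ then for each $x$ the section $\widehat{f(x)}$ of $\cO(1)\to\bbP(E^*_x)$ vanishes, and since \eqref{o1-glob-eq} is an isomorphism we get $f(x)=0$, so $f=0$. For surjectivity, given a section $g$ of $\cO_E(1)$ holomorphic along the fibers, the restriction $g|_{\bbP(E^*_x)}$ is for each $x$ a holomorphic section of $\cO(1)\to\bbP(E^*_x)$, so by \eqref{o1-glob-eq} there is a unique $f(x)\in E_x$ with $\widehat{f(x)}=g|_{\bbP(E^*_x)}$; this defines a section $f$ of $E\to X$, and unwinding the definitions gives $\langle\hat f([v]),v\rangle=v(f(\pi v))=\langle g([v]),v\rangle$ for every $v$, hence $\hat f=g$.

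For the ``moreover'' clause I would pass to a local model. Over an open set $U\subset X$ trivializing $E$, fix a frame $\mathbf{e}_1,\dots,\mathbf{e}_r$ with dual frame $\mathbf{e}^*_1,\dots,\mathbf{e}^*_r$, so that $\bbP(E^*)|_U\cong U\times\bbP^{r-1}$ and $\cO_E(\pm 1)|_{p^{-1}(U)}$ is the pullback of $\cO_{\bbP^{r-1}}(\pm 1)$. On the affine chart $\{a_1\neq 0\}$ with coordinates $t'=(t^2,\dots,t^r)$ and the frame $s_1$ of $\cO(1)$ dual to $\mathbf{e}^*_1+\sum_{j\ge 2}t^j\mathbf{e}^*_j$, writing $f|_U=\sum_i f^i\mathbf{e}_i$, the formula reads $\hat f=\bigl(f^1+\sum_{j\ge 2}t^j f^j\bigr)\,s_1$. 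This coefficient is jointly holomorphic in $(x,t')$ exactly when every $f^i$ is holomorphic on $U$, which (covering $\bbP^{r-1}$ by its standard charts and $X$ by trivializing sets $U$) gives both directions: $\hat f$ is holomorphic on $p^{-1}(U)$ iff $f$ is holomorphic on $U$. In the surjective direction with holomorphy: if $g$ is holomorphic on $p^{-1}(U)$, then in the chart it is $\phi(x,t')\,s_1$ with $\phi$ holomorphic, and membership of each $\phi(x,\cdot)$ in $H^0(\bbP^{r-1},\cO(1))$ forces $\phi$ to be affine-linear in $t'$, so $f^1(x)=\phi(x,0)$ and $f^j(x)=\partial_{t^j}\phi(x,0)$ are holomorphic and the $f$ built above is holomorphic on $U$. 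Taking $U=X$ and noting that a globally holomorphic section of $\cO_E(1)$ is in particular holomorphic along fibers yields the isomorphism $H^0(X,\cO(E))\cong H^0(\bbP(E^*),\cO_E(1))$. I expect the only mildly delicate point to be this last step: checking that the trivializations of $\cO_E(\pm 1)$ over $p^{-1}(U)$ are compatible with those of $E$ so that the coefficient computation is chart-independent and glues across overlaps — but this is routine once one writes the transition functions and uses that the identification $V=H^0(\bbP(V^*),\cO(1))$ is $GL(V)$-equivariant.
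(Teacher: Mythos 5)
The paper states Proposition \ref{section-id-prop} without proof, so there is nothing to compare against; the question is only whether your argument is sound, and it is. Your plan is exactly the natural one: the pairing $\langle\hat f([v]),v\rangle=\langle f(\pi v),v\rangle$ is homogeneous of matching degree in $v$, hence pins down an element of the dual line $\cO_E(1)_{[v]}$; the set-theoretic bijection then collapses to the fiberwise isomorphism \eqref{o1-glob-eq} applied at each $x$, and the holomorphy assertion reduces, in a trivializing frame $\mathbf{e}_1,\dots,\mathbf{e}_r$ and the standard affine charts of $\bbP^{r-1}$, to the observation that $\hat f=(f^1+\sum_{j\ge2}t^jf^j)\,s_1$ is jointly holomorphic iff each $f^i$ is holomorphic on $U$, which you extract correctly via $f^1(x)=\phi(x,0)$ and $f^j(x)=\partial_{t^j}\phi(x,0)$. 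The only step worth spelling out a bit more if this were to be written up is the one you flagged yourself at the end: that the local identification of the coefficient $\phi$ with the tuple $(f^1,\dots,f^r)$ is compatible across overlapping trivializations, i.e.\ that both the frame change in $E$ and the induced transition for $\cO_E(\pm1)$ are governed by the same cocycle, which is precisely the $GL(V)$-equivariance of $V\cong H^0(\bbP(V^*),\cO(1))$. That is routine and your remark correctly identifies it; nothing essential is missing.
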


\section{Proof of Theorem \ref{counterexample-thm}}\label{examples-section}

\subsection{Motivation for Theorem \ref{counterexample-thm}:  Deformation of Bergman spaces}

Let $X$ and $B$ be complex manifolds with $B$ connected, let $\pi :X \to B$ be a proper holomorphic submersion of fiber dimension $n$, and let $E \to X$ be a holomorphic vector bundle with smooth Hermitian metric $\fh$.  To each $t \in B$ one can associate the finite-dimensional inner product space  $\sH _t$ whose underlying vector space is $H^0(X_t, \cO (K_{X_t} \tensor E|_{X_t}))$ and whose inner product is 
\[
(f,g) _t := \ii ^{n^2} \int _{X_t} \left < f \wedge \bar g , \fh \right >.
\]
We endow the disjoint union 
\[
\sH := \coprod _{t \in B} \sH _t
\]
with a projection map $\sH \to B$ whose fiber over $t \in B$ is $\sH _t$, and we give this family of vector spaces additional structure by declaring that a section of $\sH \to B$ is simply a section of the $E$-twisted relative canonical bundle $K_{X/B}\tensor E \to X$ that is holomorphic along the fibers of $\pi : X \to B$.  Similarly, we can define sections of $\sH |_U$ for any subset $U \subset B$ by restricting the family $\pi :X \to B$ to $\pi ^{-1} (U)$.  We denote by $H^0(U, \cO(\sH))$ the collection of holomorphic sections of $\sH |_U \to U$.  When it increases clarity, we use gothic letters to denote by $\ff \in H^0(U,\cO (\sH))$ the section of $\sH \to B$ associated to the section $f$ of $K_{X/B} \tensor E \to X$ such of $\iota _{X_t} ^*f \in \sH _t$, which we denote by the corresponding latin letter.  We say that the section $\ff$ is smooth, holomorphic, measurable, etc. if this is the case for the section $f$.    And if $\ff$ is a section of $\sH \to B$ then for $t \in B$ we define 
\[
\iota _t \ff := \iota _{X_t}^* f \in \sH _t.
\]
\begin{defn}\label{def-of-bergs-defn}
The family $\sH \to B$ of Hilbert spaces with the additional structure of holomorphic and smooth sections described above is called the \emph{deformation of Bergman spaces} associated to the holomorphic family of Hermitian holomorphic vector bundles 
\[
(E,\fh) \to X \stackrel{p}{\to} B.
\]
\end{defn}

If for each point $t \in B$ we can find a neighborhood $U\subset B$ containing $t$ and a collection of sections $\ff _1,..., \fh _N \in H^0 (U, \cO (\sH))$ such that 
\[
\{ \iota _tf_1,..., \iota _tf_N\} \subset \sH _t
\]
is a basis, then we can give $\sH \to B$ the structure of a holomorphic vector bundle by using $\{ \iota _tf_1,..., \iota _tf_N\}$ as a local frame over $U$.

In general, deformations of Bergman spaces need not be vector bundles.  Nevertheless, they do carry a natural Hermitian metric as well as a complex structure of sorts.  If the deformation is indeed a vector bundle then there is a notion of curvature that one can attach to it, namely the curvature of the Chern connection of the $L^2$ metric.  

In his article \cite{bo-annals}, Berndtsson considered this scenario in the case in which $E \to X$ is a holomorphic Hermitian line bundle.  He pointed out that by the $L^2$ Extension Theorem $\sH \to B$ is a Hermitian holomorphic line bundle as soon as $E$ is semi-positive.  

In \cite{v-BLS} the author extended Berndtsson's work in two ways.  
\begin{enumerate}[i.]
\item The author provided $\sH \to B$ with a sort of complex analytic structure even when $\sH \to B$ is not a holomorphic vector bundle.  This structure, called an \emph{iBLS structure}, allows one to define an intrinsic notion of curvature of $\sH \to B$, which agrees with the curvature of the Chern connection over any open set $U \subset B$ on which $\sH|_U$ is a holomorphic vector bundle.
\item The author extended Berndtsson's work to the setting in which $E \to X$ can have higher rank.  This situation was previously considered by Liu and Yang (c.f. \cite{ly}), but under the assumption that $\sH \to B$ is already a holomorphic vector bundle.
\end{enumerate}
In particular, the author proved the following theorem.

\begin{thm}[\text{\cite[Theorem 1]{v-BLS}}]\label{dv-pos-dit}
Suppose $\pi :X \to B$ is a K\"ahler family, i.e., there is a closed form $\omega$    on $X$ whose restriction to fibers is K\"ahler.  If the Hermitian holomorphic vector bundle $E \to X$ is $k$-nonnegative (resp. $k$-positive) then so is the associated deformation of Bergman spaces $\sH \to B$.
\end{thm}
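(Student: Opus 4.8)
The plan is to adapt Berndtsson's curvature computation for direct images to the higher-rank setting, testing $k$-positivity of $\sH\to B$ by means of Proposition \ref{k-prop}(ii) applied to $(\sH,(\cdot,\cdot))\to B$. Since $k$-nonnegativity (resp. $k$-positivity) is local and pointwise on $B$, fix $t_0\in B$ and a coordinate ball $(U,t)$ about it. I will run the argument under the assumption that $\sH|_U$ is a holomorphic vector bundle --- equivalently, that $\dim_{\bbC}\sH_t$ is locally constant; when this fails one works instead with the local holomorphic sections of $\sH$, which is exactly what the iBLS formalism of \cite{v-BLS} makes precise, and the computation below goes through verbatim. A tensor of rank at most $k$ at $t_0$ may be written $\sum_{i=1}^{k}\tfrac{\di}{\di t^i}\tensor\fu_i(t_0)$ with $\fu_1,\dots,\fu_k$ local holomorphic sections of $\sH$, and, after replacing the $\fu_i$ by combinations of a local holomorphic frame with affine coefficients in $t$, we may also arrange $\nabla^{1,0}\fu_i(t_0)=0$ for the Chern connection of $(\sH,(\cdot,\cdot))$ --- this uses only the existence of a local frame, no extension theorem. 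By Proposition \ref{k-prop}(ii), $\sH$ is $k$-nonnegative (resp. $k$-positive) at $t_0$ exactly when, for every such choice, the top-degree form $-\ii\,\di\dbar(\fu_i,\fu_j)_{\sH}\wedge\Upsilon^{i\bar j}(t)$ (summed over $i,j=1,\dots,k$) is $\ge 0$ (resp. $>0$) at $t_0$.

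The substance is the second $t$-derivative of the fibre integral $(\fu_i,\fu_j)_{\sH}(t)=\ii^{n^2}\int_{X_t}\langle u_i(t)\wedge\overline{u_j(t)},\fh\rangle$. Using $K_{X/B}=K_X\tensor\pi^*K_B^{-1}$ and the coordinate trivialization of $K_B$ over $U$, each $\fu_i$ is represented on $\pi^{-1}(U)$ by a well-defined $E$-valued form $\hat u_i=u_i\wedge\pi^*(dt^1\wedge\cdots\wedge dt^m)$; and the K\"ahler family hypothesis supplies, at each point $x$ of a fibre, the horizontal subspace of $T_{X,x}$ as the $\omega$-orthogonal complement of the vertical tangent space, so that $\di/\di t^i$ has a horizontal lift $\delta^i_t$. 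Differentiating $(\fu_i,\fu_j)_{\sH}$ twice in $t$ and moving the derivatives through the moving fibre by means of these lifts --- following Berndtsson, whose computation encodes the variation of the fibrewise Bergman projection by a $\dbar$-equation on the fibre, now with the scalar weight replaced by the ${\rm Hom}(E,E)$-valued curvature $\ii\Theta(\fh)$ --- one is led to an identity which at $t_0$ reads, schematically,
\[
-\ii\,\di\dbar(\fu_i,\fu_j)_{\sH}\wedge\Upsilon^{i\bar j}(t)\,\Big|_{t_0}
=\left(\ii^{n^2}\!\int_{X_{t_0}}\{\tilde U,\tilde U\}_{\fh,\Theta(\fh)}\;+\;\|\mu\|^2_{L^2(X_{t_0})}\right)dV(t),
\]
where $\tilde U=\sum_{i=1}^{k}\delta^i_{t_0}\tensor u_i$, read through a local frame of $K_{X_{t_0}}$, is a tensor of rank at most $k$ in $T^{1,0}_X\tensor E$; $\{\cdot,\cdot\}_{\fh,\Theta(\fh)}$ is the Hermitian form of Section \ref{smooth-metrics-pos-par}; and $\mu$ is the minimal $L^2$-solution on the compact K\"ahler manifold $(X_{t_0},\omega|_{X_{t_0}})$ of a $\dbar$-equation whose right-hand side is $\dbar_{X_{t_0}}$-exact, hence orthogonal to the harmonic $(n,1)$-forms, so that $\mu$ exists by Hodge theory. (It is here that properness of $\pi$ enters, which is why, in contrast with Theorem \ref{dit-triv}, no separate fibrewise Nakano hypothesis is needed.)

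Granting the identity the conclusion is immediate. The second summand is manifestly $\ge 0$. The integrand of the first summand is $\{\cdot,\cdot\}_{\fh,\Theta(\fh)}$ evaluated on $\tilde U$, a tensor of rank $\le k$, so $k$-nonnegativity of $(E,\fh)$ on $X$ makes it $\ge 0$, and $k$-positivity makes it $>0$ wherever $\tilde U\ne 0$, i.e. wherever the $u_i(x)$ are not all zero. In the $k$-positive case, if the starting tensor $\sum_i\tfrac{\di}{\di t^i}\tensor\fu_i(t_0)\ne 0$ we may take $\fu_1(t_0),\dots,\fu_k(t_0)$ linearly independent in $\sH_{t_0}$, whence $u_1|_{X_{t_0}},\dots,u_k|_{X_{t_0}}$ are linearly independent holomorphic sections and so vanish simultaneously only on a proper analytic subset of $X_{t_0}$; the first summand is therefore strictly positive. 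Either way the displayed quantity has the asserted sign, and the theorem follows.

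The main obstacle is the curvature identity itself: one must push the two $t$-derivatives through the family and reorganize the resulting sum of curvature cross-terms, squared norms of vertical $\dbar$'s, and boundary-free integrations by parts into exactly the two displayed summands --- checking that Berndtsson's cancellations survive both the replacement of a scalar weight by a matrix-valued curvature and the presence of several base directions (the forms $\Upsilon^{i\bar j}$ being the bookkeeping device for the latter, exactly as in \eqref{k-bo-formula}), and that the residual fibre $\dbar$-term appears with a positive sign. A second, genuine point is the passage --- through the iBLS structures of \cite{v-BLS} --- from the possibly singular family $\sH\to B$ to the holomorphic vector bundle case treated above.
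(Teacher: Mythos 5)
Your proposal outlines the right structure, which is the one used in \cite{v-BLS} (the paper itself only cites that reference for this theorem and does not reproduce the argument): test $k$-positivity via Proposition \ref{k-prop}, compute $-\ii\di\dbar(\fu_i,\fu_j)\wedge\Upsilon^{i\bar j}$ using the $\omega$-horizontal lifts, and organize the result into a curvature term controlled by $k$-(non)negativity of $(E,\fh)$ plus a nonnegative $L^2$-norm of a $\dbar$-solution produced by Hodge theory on the compact fibers. Your observation that properness, rather than a separate fibrewise Nakano hypothesis, is what makes the remainder come with a plus sign --- in contrast with the proof of Theorem \ref{dit-triv}, where the Neumann-projection term enters with a minus sign and must be absorbed by H\"ormander's estimate before Proposition \ref{positivity-of-dets} can be applied --- is exactly the right distinction, and granting the displayed identity the deduction to the conclusion is correct.

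The gap is precisely the one you flag: the curvature identity is asserted, not derived, and it is the entire substance of the theorem. Its derivation in the proper K\"ahler case is considerably more involved than the computation in the proof of Theorem \ref{dit-triv}: one must push two base derivatives through the moving fiber, invoke the fibrewise adjoint via a K\"ahler identity, track both vertical and horizontal pieces of the lift and the several base directions through the $\Upsilon^{i\bar j}$ bookkeeping, and verify that after boundary-free integration by parts on the compact fiber everything assembles into exactly the two displayed summands with the asserted signs. Some of the details you call ``schematic'' are not innocuous: $\{\tilde U,\tilde U\}_{\fh,\Theta(\fh)}$ is a scalar, so the fibre integral must draw its volume density from the $K_{X_{t_0}}$ factor of $u_i$; the $\dbar$-equation whose minimal solution is $\mu$ must be written precisely and its data shown to be fibrewise $\dbar$-exact (hence orthogonal to harmonics); and the reduction of a general, possibly non-locally-trivial deformation of Bergman spaces to the locally trivial case via the iBLS formalism of \cite{v-BLS} is a genuine additional layer, since Proposition \ref{k-prop}(ii) as stated requires a local holomorphic frame. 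As written, the proposal is a correct road map rather than a proof.
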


If the deformation of Bergman spaces $\sH \to B$ associated to the proper holomorphic Hermitian family of vector bundles is locally trivial then one can deduce Theorem \ref{dv-pos-dit} from the work of Liu-Yang (and more specifically, from \cite[Theorem 1.6]{ly}).  However, to the best of our knowledge it is not known whether the condition of $k$-positivity of $E \to X$ implies local triviality of $\sH$ unless $k \ge \min ({\rm Rank}(E), \dim X)$, i.e., unless $E \to X$ is Nakano-positive.

In the next paragraph we find what is probably the simplest example of a Griffiths positive vector bundle for which $L^2$ Extension fails, thus showing that Berndtsson's approach to proving local triviality in the rank-1 case cannot be easily extended to the higher-rank setting.  

Unfortunately we do not have a good guess as to whether there is a non-locally trivial deformation of Bergman spaces $\sH \to B$ associated to a proper family of holomorphic Hermitian vector bundles $(E, \fh) \to X \stackrel{p}{\to} B$ such that $\fh$ is $k$-positive for some $k \ge 1$.

\begin{rmk}
If the holomorphic submersion $p :X \to B$ is not proper then the resulting deformation of Bergman spaces is too wild to study without further assumptions.  Theorem \ref{dit-triv} considers a very special but rather useful case in which the resulting family of Bergman spaces is a trivial vector bundle with non-trivial $L^2$ metric.  There is also very interesting work on the non-proper case by \cite{wang} and \cite{pranav} in the case of a proper family of manifolds-with-boundary.
\red
\end{rmk}

\subsection{Curvature of the universal quotient bundle}

In Paragraph \ref{UQB-par} we recalled the definition of the universal quotient bundle $Q \to \bbP(V^*)$, the Hermitian holomorphic vector bundle defined by the short exact sequence of holomorphic Hermitian vector bundles 
\[
0 \to \cO (-1) \to \bbV \to Q \to 0,
\]
where $\bbV \to \bbP(V^*)$ is the trivial Hermitian vector bundle with fiber $V^*$ and constant Hermitian inner product defined by some Hermitian inner product $\fh$ on $V$.  

Let us compute the curvature of the quotient metric $\fh_Q$ for $Q$ at a point $x_o \in \bbP(V^*)$.  We can choose an $\fh^*$-orthonormal basis $e_0,..., e_{r-1}$ for $V^*$ such that $e_1 \in x_o$.  In the open set
\[
U_{x_o} \ni x := [e_0+ z^1e_1 + \cdots + z^{r-1} e_{r-1}] \stackrel{\cong}{\mapsto} z(x) = (z^1,..., z^{r-1}) \in \bbC ^{r-1},
\]
which has $x_o$ as its origin, we take the holomorphic frame for $\cO (-1)|_{U_{x_o}}$ given by 
\[
\ve _0(x) = e_0 + z(x) \cdot \underbar{e},
\]
where $\underbar{e} = (e_1,..., e_{r-1})$, and we let $\fe_1,..., \fe_{r-1}$ be the holomorphic frame for $Q|_{U_{x_o}}$ given by 
\[\label{E-frame-defn}
\fe_j = q(e_j), \quad e_j \in V^* \cong H^0(\bbP(V^*), \cO (\bbV^*)), \ 1 \le j \le r-1.
\]
Writing $q ^{\dagger}(\fe_j) := \zeta _j \ve _0 (x) + e_j$, we have
\[
0 = \fh^* (q ^{\dagger}(\fe_j), \ve _0(x)) = \fh^* ( \zeta _j e_0 + \zeta _j z(x) \cdot \underbar{e} + e_j, e_0 + z(x) \cdot \underbar{e} ) = \bar z^j + \zeta _j (1+ |z|^2),
\]
so that 
\[
q^{\dagger}(\fe_j) = - \frac{\bar z ^j}{(1+ |z|^2)}\ve _0(x) + e_j.
\]
Now we can compute $\fh _Q$.  A general holomorphic section of $Q$ can be written as $s = f^i \fe_i$, and 
\begin{eqnarray*}
\fh _Q (s_1, s_2) &=& f_1^i \overline{f_2^j} \fh _Q(\fe_i, \fe_j) =f_1^i \overline{f_2^j} \fh^* (q^{\dagger}\fe_i,q^{\dagger}\fe_j) \\
& = & f_1^i \overline{f_2^j} \fh^* \left (\frac{- \bar z^i}{1+|z|^2}(e_o +z \cdot \underbar{e}) + e_i , \frac{- \bar z^j}{1+|z|^2}(e_o +z \cdot \underbar{e}) + e_j \right ) \\
&=&f_1^i \overline{f_2^j}  \delta _{i \bar j} +  f_1^i \overline{f_2^j} \frac{z^j \bar z ^i}{1+|z|^2} - f_1^i \overline{f_2^j} \frac{z^j \bar z ^i}{1+|z|^2} - f_1^i \overline{f_2^j} \frac{z^j \bar z ^i}{1+|z|^2}\\
&=& f_1^i \overline{f_2^j} \left (  \delta _{i \bar j}  -  \frac{z^j \bar z ^i}{1+|z|^2} \right ).
\end{eqnarray*}
It is now straightforward to calculate $\Theta (\fh _Q)$ at $x_o$: with $v := b^i \fe_i \in Q_{x_o}$ and $\xi = a^i \frac{\di}{\di z^i} \in T_{\bbP(V^*), x_o}$,  
\[
\fh _Q(\Theta (\fh _Q)_{\xi \bar \xi}v , v) = |a \cdot \bar b|^2.
\]
In particular, while $Q$ is Griffiths nonnegative, a fact we also know because $Q$ is a quotient of a trivial bundle, it is not Griffiths positive.  More generally, letting $h$ be the matrix with 
\[
h_{i \bar j} = \delta _{i \bar j} - \frac{z ^j \bar z ^i}{1+|z|^2},
\]
we have 
\[
\Theta (\fh _Q)= \dbar (\di h h^{-1}) =(\dbar \di h)h^{-1}+ (\di h) h^{-1} \wedge (\dbar h) h^{-1}.
\] 
Since $h(0) = {\rm Id}$ and  $\di h(0)= \dbar h(0) = 0$, 
\[
\Theta (\fh _Q) (0) = \sum _{i,j =1} ^{r-1}  \dbar \di h_{i \bar j} (0) \tensor E^{*i} \tensor \overline{E^{*j}} = \sum _{i,j =1} ^{r-1} dz^j \wedge d\bar z ^i \tensor E^{*i} \tensor \overline{E^{*j}}.
\]
Hence for $\xi _i = a^{\ell}_i  \frac{\di}{\di z ^{\ell}} \in T^{1,0}_{\bbP(V^*),x_o}$ and $v _i = b^{\ell}_i \fe_{\ell} \in Q_{x_o}$, 
\begin{equation}\label{Q-curv-formula}
\sum _{i,j=1} ^m \fh _Q(\Theta (\fh_Q)_{\xi _i \bar \xi _j}v_i, v_j) = \sum _{i,j=1} ^m (a_i \cdot \bar b_j) \cdot \overline{(a_j \cdot \bar b _i)},
\end{equation}
which is not $m$-nonnegative for any $m > 1$.  In particular, $Q$ is not Nakano-nonnegative.

\begin{s-rmk}
Upon interchanging $\xi _i$ and $\xi _j$, we find that $Q$ is dual-Nakano nonnegative.
\red
\end{s-rmk}

If we twist $Q$ by $\cO (k)$ with its $\fh$-Fubini-Study metric $e^{-k\vp_{\fh}}$, we have 
\[
\Theta (e^{-k \vp_{\fh}} \fh _Q) = k \ii \di \dbar \vp _{\fh} \tensor {\rm Id}_Q + \Theta (\fh _Q).
\]
Using the coordinates $z$ above, we have 
\[
\ii \di \dbar \vp _{\fh} = \ii \di \dbar \log (1+|z|^2) = \frac{\ii dz \dot \wedge d\bar z }{1+|z|^2} - \frac{\ii (\bar z \cdot dz) \wedge (z \cdot d\bar z)}{(1+|z|^2)^2}.
\]
and at the origin $x_o$ we find that  $\ii \di \dbar \vp _{\fh}$ is just the Euclidean metric.  Therefore at the origin $x_o$
\[
\sum _{i,j=1} ^m e^{-k \vp_{\fh}}\fh _Q(\Theta (e^{-k \vp_{\fh}}\fh_Q)_{\xi _i \bar \xi _j}v_i, v_j) = k \sum _{i=1} ^m |a_i|^2 \sum _{j=1} ^m |b_j|^2 + \sum _{i,j=1} ^m (a_i \cdot \bar b _j) \cdot \overline{(a_j \cdot \bar b_i)}.
\]
For $m = 1$ we have 
\[
e^{-k \vp_{\fh}}\fh _Q(\Theta (e^{-k \vp_{\fh}}\fh_Q)_{\xi \bar \xi}v, v) = k|a|^2|b|^2 + |a \cdot \bar b|^2 \ge k |a|^2|b|^2,
\]
so that $\cO (k)\tensor Q$ is Griffiths positive for all $k \ge 1$.  

To understand the situation for higher positivity, we first observe that 
\begin{eqnarray*}
\sum _{i,j=1} ^m (a_i \cdot \bar b _j) \cdot \overline{(a_j \cdot \bar b_i)} &=& \sum _{i=1} ^m |a_i \cdot \bar b_i|^2 + \sum _{i < j} 2 \re (a_i \cdot \bar b _j) \cdot \overline{(a_j \cdot \bar b_i)}\\
&\ge & \sum _{i < j} 2 \re (a_i \cdot \bar b _j) \cdot \overline{(a_j \cdot \bar b_i)} \ge   - \sum _{1 \le i \neq j \le m} |a_i \cdot \bar b_j|^2.
\end{eqnarray*}
Thus 
\begin{eqnarray*}
&& k \sum _{i=1} ^m |a_i|^2 \sum _{j=1} ^m |b_j|^2 + \sum _{i,j=1} ^m (a_i \cdot \bar b _j) \cdot \overline{(a_j \cdot \bar b_i)} \\
& = & (k-1) \sum _{i=1} ^m |a_i|^2 \sum _{j=1} ^m |b_j|^2 +  \sum _{i=1} ^m |a_i|^2 \sum _{j=1} ^m |b_j|^2 + \sum _{i,j=1} ^m (a_i \cdot \bar b _j) \cdot \overline{(a_j \cdot \bar b_i)}\\
& \ge & (k-1) \sum _{i=1} ^m |a_i|^2 \sum _{j=1} ^m |b_j|^2 +  \sum _{i=1} ^m |a_i|^2 |b_i|^2 +  \sum _{1 \le i\neq j \le m} ^m \left (|a_i|^2 |b_j|^2 - |a_i \cdot b_j|^2 \right )\\
&\ge& (k-1) \sum _{i=1} ^m |a_i|^2 \sum _{j=1} ^m |b_j|^2.
\end{eqnarray*}
We have therefore proved the following proposition.

\begin{prop}\label{Q(k)-pos-prop}
The vector bundle $Q \tensor \cO (k)$ is 
\begin{enumerate}[{\rm (i)}]
\item Griffiths-nonnegative but not Griffiths-positive for $k =0$, 
\item Griffiths-positive but not $2$-positive for $k=1$, and 
\item Nakano positive as soon as $k \ge 2$.
\end{enumerate}
\end{prop}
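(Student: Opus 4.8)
The plan is to read off all three parts from the curvature formulas already established, so the argument is largely a matter of assembling the displayed estimates. First I note that the curvature of $\fh_Q$ — and hence of the twist $e^{-k\vp_\fh}\fh_Q$ — was computed at an \emph{arbitrary} point $x_o\in\bbP(V^*)$ in coordinates and a frame adapted to $x_o$, so formula \eqref{Q-curv-formula} and its $\cO(k)$-twisted version hold at every point; consequently it suffices to test positivity pointwise at the origin $x_o$ using those formulas. (Equivalently, one may invoke the transitive action of the unitary group of $\fh$ on $\bbP(V^*)$, which preserves all the data in sight.) Throughout I assume $r\ge 3$; when $r=2$ the bundle $Q$ is the positive line bundle $\cO(1)$ on $\bbP^1$, so parts (i)--(ii) are to be understood as statements for $r\ge 3$.

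For part (i) the nonnegative half is immediate: $Q$ is a holomorphic quotient of the flat bundle $\bbV^*$ and so is Griffiths-nonnegative, as \eqref{Q-curv-formula} with $m=1$ also confirms, since it reads $\fh_Q(\Theta(\fh_Q)_{\xi\bar\xi}v,v)=|a\cdot\bar b|^2\ge 0$. To see that $Q$ is not Griffiths-positive I will exhibit a nonzero rank-$1$ tensor on which this form vanishes: at $x_o$ take $\xi=\di/\di z^1$ and $v=\fe_2$ — legitimate because $r-1\ge 2$ — so that $a\cdot\bar b=0$. For part (ii) the positive half is the displayed $m=1$ identity $e^{-\vp_\fh}\fh_Q(\Theta(e^{-\vp_\fh}\fh_Q)_{\xi\bar\xi}v,v)=|a|^2|b|^2+|a\cdot\bar b|^2\ge|a|^2|b|^2$, which is strictly positive whenever $\xi,v\neq 0$; hence $\cO(1)\tensor Q$ is Griffiths-positive. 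To see that it fails to be $2$-positive I will exhibit a nonzero rank-$2$ tensor on which the curvature form of $\cO(1)\tensor Q$ vanishes. The natural candidate — suggested by the identity $\cO(1)\tensor Q=T_{\bbP(V^*)}$, whose curvature ought to degenerate along a bivector — is $\di/\di z^1\tensor\fe_2-\di/\di z^2\tensor\fe_1$ at $x_o$. Using \eqref{Q-curv-formula} for the curvature of $\fh_Q$ together with the fact that $\ii\di\dbar\vp_\fh$ is the Euclidean form at $x_o$, a short computation shows that on this tensor the $\fh_Q$-curvature is exactly the negative of the contribution from the $\cO(1)$-factor, so the total vanishes; since $\di/\di z^1,\di/\di z^2$ and $\fe_2,-\fe_1$ are each linearly independent, this tensor has rank exactly $2$, and $\cO(1)\tensor Q$ is therefore not $2$-positive.

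For part (iii), with $k\ge 2$, I will run the chain of inequalities displayed above, which terminates in
\[
\sum_{i,j=1}^m e^{-k\vp_\fh}\fh_Q\big(\Theta(e^{-k\vp_\fh}\fh_Q)_{\xi_i\bar\xi_j}v_i,v_j\big)\ \ge\ (k-1)\sum_{i=1}^m|a_i|^2\sum_{j=1}^m|b_j|^2.
\]
If the tensor $T=\sum_i\xi_i\tensor v_i$ is nonzero then not all the $\xi_i$ vanish and not all the $v_i$ vanish, so both sums on the right are strictly positive, and $k-1\ge 1$; hence the form is strictly positive on every nonzero tensor of every rank (as $m$ is arbitrary), i.e., $\cO(k)\tensor Q$ is Nakano-positive.

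The only genuinely substantive steps are the two exhibitions of degenerate tensors witnessing the non-positivity assertions in (i) and (ii); everything else transcribes estimates established above. I expect part (ii) to require the most care: one must pin down the degenerate rank-$2$ tensor, verify that it has rank exactly $2$ rather than $1$, and carry out the small curvature computation correctly — all while keeping track of the standing assumption $r\ge 3$, under which the non-positivity claims in (i)--(ii) have content.
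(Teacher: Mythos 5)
Your overall approach is the same as the paper's: read (i)--(iii) off the curvature formulas computed at $x_o$ and exhibit degenerate tensors for the non-positivity assertions. Parts (i) and (ii) are handled correctly; in particular your candidate $\di/\di z^1\tensor\fe_2-\di/\di z^2\tensor\fe_1$ for (ii) is exactly right, and your side remark that $r\ge 3$ is needed for (i)--(ii) is a useful precision the paper leaves implicit.

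There is, however, a genuine problem in part (iii): the terminal inequality you quote,
\[
\sum_{i,j=1}^m e^{-k\vp_\fh}\fh_Q\bigl(\Theta(e^{-k\vp_\fh}\fh_Q)_{\xi_i\bar\xi_j}v_i,v_j\bigr)\ \ge\ (k-1)\sum_{i}|a_i|^2\sum_{j}|b_j|^2,
\]
is false as stated, because the $\cO(k)$-contribution in the displayed curvature formula is not $k\sum_i|a_i|^2\sum_j|b_j|^2$ but rather $k\sum_{i,j}(a_i\cdot\bar a_j)(b_i\cdot\bar b_j)$. Indeed, since $\ii\di\dbar\vp_\fh$ is Euclidean at $x_o$ and $\fh_Q(v_i,v_j)=b_i\cdot\bar b_j$ there, the twist contributes $k\,[\ii\di\dbar\vp_\fh](\xi_i,\bar\xi_j)\,\fh_Q(v_i,v_j)=k\,(a_i\cdot\bar a_j)(b_i\cdot\bar b_j)$, and these two expressions agree only for $m=1$. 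You can see the discrepancy already in your own part (ii): for $a_1=e_1,\,b_1=e_2,\,a_2=e_2,\,b_2=-e_1$ the correct twist term is $\sum(a_i\cdot\bar a_j)(b_i\cdot\bar b_j)=2$, which exactly cancels the $\fh_Q$-curvature $-2$ (as your ``short computation'' must have found), whereas $\sum|a_i|^2\sum|b_j|^2=4$ would give a total of $2\neq 0$ and your degenerate tensor would not work. The same tensor with $k=2$ shows the quoted bound $\ge(k-1)\sum|a_i|^2\sum|b_j|^2$ fails: the left side is $2(k-1)=2$, the right side is $4(k-1)=4$.

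The conclusion of (iii) is still true, but you need a different estimate. Set $A:=\sum_i a_i\,b_i^{\mathsf T}$, so that $A^{\ell m}$ is the coefficient of $\di/\di z^\ell\tensor\fe_m$ in the tensor $T=\sum_i\xi_i\tensor v_i$. Then the correct twist term is $\sum_{i,j}(a_i\cdot\bar a_j)(b_i\cdot\bar b_j)=\|A\|_F^2$, while \eqref{Q-curv-formula} gives $\sum_{i,j}(a_i\cdot\bar b_j)\overline{(a_j\cdot\bar b_i)}=\operatorname{tr}(A\bar A)=\langle A,A^{\mathsf T}\rangle_F$, which by Cauchy--Schwarz satisfies $|\operatorname{tr}(A\bar A)|\le\|A\|_F^2$. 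Hence the total curvature is $\ge (k-1)\|A\|_F^2$, which is strictly positive for $k\ge 2$ whenever $T\neq 0$ (equivalently $A\neq 0$); this gives Nakano positivity. Equivalently, decomposing $A$ into symmetric and antisymmetric parts one gets $(k+1)\|A_{\mathrm{sym}}\|_F^2+(k-1)\|A_{\mathrm{asym}}\|_F^2$, which also exhibits all three parts of the proposition at a glance. I would replace the appeal to the paper's chain of inequalities with this argument; as written, your part (iii) rests on a step that doesn't hold.
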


\subsection{Non-extension Theorem for the tangent bundle of a projective plane}
Let $V$ be a complex vector space of dimension $r \ge 3$ equipped with a Hermitian inner product $\fh _o$, and denote by $\bbP(V)$ its projectivization.  As before, we let $\bbV$ denote the trivial holomorphic Hermitian vector bundle with fiber $V$, where the Hermitian metric is the constant inner product given by $\fh_o$ on each fiber.  We give $\cO (1) \to \bbP (V)$ the Fubini-Study metric $\fh _{\rm FS}$ associated to $\fh _o$, i.e., 
\[
\fh _{\rm FS} (\xi, \xi) := \frac{\left | \left < \xi, v\right >\right |^2}{\fh _o(v,v)},
\]
and we write 
\[
\bbV(1) := \bbV \tensor \cO (1).
\]
As recalled at the end of Paragraph \ref{UQB-par}, the tangent bundle of $\bbP(V)$ is characterized by the sequence of Hermitian vector bundles 
\begin{equation}\label{TP-seq-0}
0 \to \cO \to \bbV(1) \to T_{\bbP(V)} \to 0.
\end{equation}
If we twist $T_{\bbP (V)}$ by $\cO (k)$ for any positive $k$ then the resulting Hermitian vector bundle is Nakano positive by Proposition \ref{Q(k)-pos-prop}.  Thus, in view of the $L^2$ Extension Theorem \ref{OTM}, among $\cO (k) \tensor T_{\bbP(V)}$ the only possible candidate for the proof of Theorem \ref{counterexample-thm}  is the tangent bundle $T_{\bbP (V)}$.

We seek a hypersurface $S \subset \bbP (V)$ such that some global section of $K_{S} \tensor T_{\bbP(V)}|_S \to S$ does not extend to a global section of $K_{\bbP(V)} \tensor L_S \tensor T_{\bbP(V)} \to \bbP (V)$.  To study the extension problem, one looks at the \emph{adjoint restriction sequence}
\[
0 \to  \cO_{\bbP(V)}(K_{\bbP(V)} \tensor T_{\bbP(V)}) \stackrel{\tensor T}{\to} \cO_{\bbP(V)} (K_{\bbP(V)} \tensor L_S \tensor T_{\bbP(V)}) \to \cO_S (K_S \tensor E|_S) \to 0,
\]
where $T \in H^0(X, \cO (L_S))$ is any holomorphic section such that ${\rm Ord}(T) = S$.  Passing to the long exact sequence in cohomology gives
\begin{center}
\begin{tikzcd}
H^0(\bbP(V), \cO(K_{\bbP(V)} \tensor L_S \tensor T_{\bbP(V)})) \arrow[r,"R_0"]
        \arrow[d, phantom, " "{coordinate, name=Z}]
      & H^0(S, \cO_S(K_S \tensor T_{\bbP(V)}|_S)) \arrow[dl,rounded corners,to path={ -- ([xshift=2ex]\tikztostart.east)
|- (Z) [near end]\tikztonodes -| ([xshift=-2ex]\tikztotarget.west) -- (\tikztotarget)}] \\
H^1(\bbP(V), \cO _{\bbP(V)} (K_{\bbP(V)} \tensor T_{\bbP(V)})) \arrow[r,"R_1"]
& H^1(\bbP(V), \cO _{\bbP(V)} (K_{\bbP(V)} \tensor L_S \tensor T_{\bbP(V)})) 
\end{tikzcd}.
\end{center}
We want to avoid the surjectivity of the map $R_0$.  Since the sequence is exact, $R_0$ is surjective if and only if $R_1$ is injective.  

Now, letting $d := \deg (S)$, the vector bundle $L_S \tensor T_{\bbP (V)} = \cO (d) \tensor T_{\bbP(V)}$ in Nakano positive, so by Kodaira-Nakano Vanishing 
\[
H^1(\bbP(V), \cO _{\bbP(V)} (K_{\bbP(V)} \tensor L_S \tensor T_{\bbP(V)})) = \{0\}.
\]
Thus 
\[
\text{$R_1$ is injective if and only if }H^1(\bbP(V), \cO _{\bbP(V)} (K_{\bbP(V)} \tensor T_{\bbP(V)}))= \{0\}.
\]
But by Serre duality and  the Dolbeault Isomorphism 
\[
H^1(\bbP(V), \cO _{\bbP(V)} (K_{\bbP(V)} \tensor T_{\bbP(V)})) = H^{r-2} (\bbP(V), \cO_{\bbP(V)} (T_{\bbP(V)} ^*))^* \cong H^{r-2, 1}(\bbP(V)).
\]
We see that for $r \ge 4$ the restriction map $R_0$ is surjective.  On the other hand, if $r=3$ then 
\[
\dim H^1(\bbP(V), \cO _{\bbP(V)} (K_{\bbP(V)} \tensor T_{\bbP(V)})) = \dim H^{1,1}(\bbP_2) = 1,
\]
so regardless of the curve $S$, the map $R_0$ is \emph{never} surjective.    

Thus Theorem \ref{counterexample-thm} is proved if we take $X = \bbP(V)$ for any Hermitian vector space $(V, \fh)$ of dimension $3$, $E = T_{\bbP(V)}$ with metric induced by $\fh$ as above, and $S \subset \bbP(V)$ any smooth plane curve.
\qed

\section{Proof of Theorem \ref{dit-triv}}\label{dit-appendix}

In the proof of Theorem \ref{dit-triv} we shall need a vector bundle analogue of the part of Schur Complement Theory originally used by Semmes in \cite{semmes} to study the curvature of $\sH \to B$ in the case in which $E$ has rank $1$.  We begin with two vector spaces $V$ and $M$; the choice of letters is meant to suggest that $V$ is a fiber of the vector bundle and $M$ is the tangent space to the manifold.  A Hermitian form on $V \tensor M$ is said to be $k$-positive if it is positive on tensors of rank $k$.  This notion was previously introduced for Hermitian endomorphisms (c.f. Definition \ref{pos-endos-defn}), but the idea is the same for Hermitian forms.

In our situation the manifold is a product, so we suppose that $M = M_1 \oplus M_2$.  We fix a Hermitian inner product $g$ for $M$ such that $M_1 \perp M_2$, and a Hermitian inner product $h$ for $V$.  Let $\fH$ be a Hermitian form on $M \tensor V$.  There is a unique linear $g \tensor h$-Hermitian map $\fJ : M \times V \to M \times V$, sometimes called the \emph{inertia map of $\fH$} with respect to $g \tensor h$, such that 
\[
\fH (T_1, T_2) = (g \tensor h)(\fJ T_1, T_2).
\]
The splitting $M = M_1 \oplus M_2$ induces a decomposition of $\fJ$ into four maps 
\[
\fJ _{ij} :M_i \tensor V \to M_j \tensor V, \quad 1 \le i,j\le 2,
\]
and since $\fJ$ is Hermitian, $ \fJ^{\dagger}_{ij} = \fJ _{ji}$ for all $1 \le i,j\le 2$,
where the Hermitian conjugation is with respect to the appropriate Hermitian inner products.

\begin{prop}\label{positivity-of-dets}
Let $\fH$ be a $k$-positive Hermitian form on $(M_1 \oplus M_2) \tensor V$ such that $\fH|_{M_2 \tensor V}$ is Nakano-positive.  Then the Hermitian form on $M_1 \tensor V$ whose inertia map with respect to $g_1 \tensor h$ is
\[
\fI _{11} - \fI_{12} \fI_{22}^{-1} \fI _{21} : M_1 \tensor V \to M_1 \tensor V
\]
is $k$-positive.
\end{prop}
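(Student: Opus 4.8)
The plan is to run the classical Schur-complement argument. Write $\fJ$ for the inertia map of $\fH$ with respect to $g\tensor h$ and $\fJ_{ij}$ for its blocks, so that the operator appearing in the statement is $\fJ_{11}-\fJ_{12}\fJ_{22}^{-1}\fJ_{21}$; the idea is to realize this Schur-complement form on $M_1\tensor V$ as the restriction of $\fH$ to a graph, and then feed the $k$-positivity of $\fH$ through that restriction. First I would note that the hypothesis ``$\fH|_{M_2\tensor V}$ is Nakano-positive'' says exactly that the inertia map of $\fH|_{M_2\tensor V}$ with respect to $g_2\tensor h$, namely $\fJ_{22}$, is positive-definite; in particular $\fJ_{22}$ is invertible, so for each $T\in M_1\tensor V$ the tensor
\[
\widehat T:=\bigl(\,T\,,\,-\fJ_{22}^{-1}\fJ_{21}T\,\bigr)\in(M_1\oplus M_2)\tensor V
\]
is well defined, and its $M_2\tensor V$-component $\fJ_{21}T+\fJ_{22}\!\left(-\fJ_{22}^{-1}\fJ_{21}T\right)$ vanishes. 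Consequently the $M_2\tensor V$-component of $\fJ\widehat T$ vanishes too, and using this together with the Hermitian symmetry of $\fJ$ (that is, $\fJ_{12}=\fJ_{21}^{\dagger}$ and $\fJ_{22}^{\dagger}=\fJ_{22}$) one computes
\[
\fH(\widehat T,\widehat T)=(g\tensor h)(\fJ\widehat T,\widehat T)=(g_1\tensor h)\bigl((\fJ_{11}-\fJ_{12}\fJ_{22}^{-1}\fJ_{21})\,T,\,T\bigr).
\]
Completing the square in the $M_2\tensor V$-variable shows moreover that $\widehat T$ minimizes $S\mapsto\fH\bigl((T,S),(T,S)\bigr)$ over $S\in M_2\tensor V$, so the Schur-complement form evaluated at $T$ equals $\min_{S\in M_2\tensor V}\fH\bigl((T,S),(T,S)\bigr)$.

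By definition, $k$-positivity of the Schur-complement form is the assertion that this minimum is strictly positive for every nonzero $T\in M_1\tensor V$ with $\operatorname{rank}T\le k$. Since the $M_1\tensor V$-component of $\widehat T$ is $T\neq0$, the tensor $\widehat T$ is nonzero, so \emph{if} $\operatorname{rank}\widehat T\le k$ then the $k$-positivity of $\fH$ gives $\fH(\widehat T,\widehat T)>0$ and we are done. The difficulty is that the only evident bound, $\operatorname{rank}\widehat T\le\operatorname{rank}T+\operatorname{rank}(\fJ_{22}^{-1}\fJ_{21}T)$, controls $\operatorname{rank}\widehat T$ only by roughly $2k$ (and in any case only by $\dim V$), which is useless. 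Thus the substance of the proposition is that this minimization can always be carried out within the rank-$\le k$ locus — equivalently, that $\min_{S\in M_2\tensor V}\fH((T,S),(T,S))$ is attained, or at least dominated, by a competitor of rank $\le k$ — and this is where the two hypotheses must be used together.

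The step I expect to be the main obstacle is precisely this rank control. The line of attack I would pursue is to localize in the $V$-variable: let $V_T\subseteq V$ be the subspace spanned by the $V$-legs of $T$ (the image of the contraction $M_1^{*}\to V$ determined by $T$), so that $T\in M_1\tensor V_T$ and $\dim V_T=\operatorname{rank}T\le k$; then \emph{every} tensor in $(M_1\oplus M_2)\tensor V_T$ automatically has rank $\le\dim V_T\le k$. The question then reduces to whether the minimum defining the Schur complement is unchanged when $S$ is restricted to $M_2\tensor V_T$ — a priori the minimizer strays into $M_2\tensor V_T^{\perp}$, and excluding this is exactly the point at which Nakano-positivity of $\fJ_{22}$ (as opposed to positivity only on low-rank tensors) has to be brought to bear. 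A natural tactic is an induction on $\dim M_2$: peel off one line $\ell\subseteq M_2$ at a time, perform the elementary rank-$1$ Schur reduction across $\ell\tensor V$ (where the classical computation applies because $\fH|_{\ell\tensor V}$ is positive-definite), compose the resulting Schur complements, and track the rank bookkeeping through the recursion, the Nakano hypothesis guaranteeing that every intermediate diagonal block stays positive-definite so that the recursion is legitimate. Carrying this bookkeeping through cleanly is the crux; once a rank-$\le k$ representative of the minimization is produced, the $k$-positivity of $\fH$ closes the argument.
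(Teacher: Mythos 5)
You have correctly isolated the real difficulty. Writing $\widehat T=(T,-\fJ_{22}^{-1}\fJ_{21}T)$ does realize the Schur-complement value at $T$ as $\fH(\widehat T,\widehat T)$, but $k$-positivity of $\fH$ can only be invoked if $\operatorname{rank}\widehat T\le k$, and nothing forces $-\fJ_{22}^{-1}\fJ_{21}T$ to lie in $M_2\tensor V_T$. You flag the production of a rank-$\le k$ competitor achieving the minimum as the crux and then do not supply it, so the proposal as written is incomplete. The paper's own proof does not fill this gap either: the substitution $w_k:=-\fJ_{22}^{-1}\fJ_{21}\tau_k$ does not type-check for $\tau_k\in M_1$ (the operator acts on $M_1\tensor V$, not $M_1$), and for the displayed identity $\sum_{i,j}\fH(\xi_i\tensor f_i,\xi_j\tensor f_j)=\sum_{i,j}f_j^\dagger\tensor\tau_j^\ddagger(\fJ_{11}-\fJ_{12}\fJ_{22}^{-1}\fJ_{21})f_i\tensor\tau_i$ to hold one needs precisely that $-\fJ_{22}^{-1}\fJ_{21}T$ decompose as $\sum_k w_k\tensor f_k$ with $w_k\in M_2$, i.e.\ that the global minimizer already lie in $M_2\tensor V_T$ --- exactly what you worry about.

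This rank control is not just elusive; it is false in general, and so is the proposition as stated. Take $\dim M_1=\dim M_2=1$, $\dim V=2$, $g,h$ standard, $\fJ_{11}=\fJ_{22}=\mathrm{Id}_V$, and let $\fJ_{12}=\fJ_{21}^\dagger$ be the nilpotent map $f_1\mapsto 0$, $f_2\mapsto\tfrac32 f_1$. For $\alpha=(a,b)\in M_1\oplus M_2$ and $v=v_1f_1+v_2f_2\in V$ one has
\[
\fH(\alpha\tensor v,\alpha\tensor v)=(|a|^2+|b|^2)|v|^2+3\,\Re\!\left(\bar a b\,v_2\bar v_1\right)\ge\tfrac14(|a|^2+|b|^2)|v|^2>0,
\]
so $\fH$ is $1$-positive, and $\fH|_{M_2\tensor V}=\mathrm{Id}_V$ is Nakano-positive; yet $\fJ_{11}-\fJ_{12}\fJ_{22}^{-1}\fJ_{21}=\mathrm{Id}_V-\fJ_{12}\fJ_{12}^\dagger$ has $-\tfrac54$ as an eigenvalue, so the Schur form is not $1$-positive (every tensor in $M_1\tensor V$ has rank $\le 1$ here). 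Concretely, for $T=e_1\tensor f_1$ the minimizer is $S_0=-\tfrac32\,e_2\tensor f_2\notin M_2\tensor\bbC f_1$, and $(T,S_0)$ has rank $2$, so the $1$-positivity of $\fH$ is silent about it. Some additional hypothesis on $\fH$ (for instance Nakano-positivity on all of $(M_1\oplus M_2)\tensor V$, in which case the assertion is classical Schur-complement theory) is needed; neither the localization-to-$V_T$ tactic you sketch nor the paper's substitution can work without it.
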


\begin{proof}
For $v\in V$, and define $v^{\dagger} \in V^*$ by 
\[
v^\dagger w = h(w,v) \quad \text{for all }w \in V,
\]
and for $\xi \in M$ let $\xi^{\ddagger} \in M^*$ be defined by 
\[
\xi ^\ddagger \eta = g(\eta,\xi) \quad \text{ for all }\eta \in M.
\]
For $f_i \in V$ and $\xi_k= (\tau_k, w_k) \in M = M_1 \oplus M_2$, $1 \le i,k \le m$,  we have 
\begin{eqnarray*}
\fH (\xi _k \tensor f_i, \xi _{\ell} \tensor f_j) &=& \tau _{\ell} ^{\ddagger} \left (f_j ^{\dagger} \fI _{11} f_i\right ) \tau _k  +  w_{\ell} ^{\ddagger} \left (f_j ^{\dagger} \fI _{12} f_i\right ) \tau _k +  \tau_{\ell} ^{\ddagger} \left (f_j ^{\dagger} \fI _{21} f_i\right ) w _k +  w_{\ell} ^{\ddagger} \left (f_j ^{\dagger} \fI _{22} f_i\right ) w_k \\
&=& \tau _{\ell} ^{\ddagger} \left (f_j ^{\dagger} \fI _{11} f_i\right ) \tau _k  +  w_{\ell} ^{\ddagger} \left (f_j ^{\dagger} \fI _{12} f_i\right ) \tau _k +  \tau_{\ell} ^{\ddagger} \left (f_j ^{\dagger} \fI _{12}^{\dagger} f_i\right ) w _k +  w_{\ell} ^{\ddagger} \left (f_j ^{\dagger} \fI _{22} f_i\right ) w_k;
\end{eqnarray*}
the second equality holds because $\fI$ is Hermitian.  Now take $w_k := - \fJ_{22} ^{-1} \fJ_{21} \tau _k$, $1 \le k \le m$.  Then 
\[
\fH (\xi _k \tensor f_i,\xi _{\ell} \tensor f_j) = f_j^{\dagger} \tensor \tau _{\ell} ^{\ddagger} \left ( \fJ _{11} - \fJ_{12} \fJ_{22} ^{-1} \fJ_{21} \right ) f_i \tensor \tau _k.
\]
Setting $k=i$ and $\ell = j$, and then summing, we obtain 
\[
\sum _{i,j=1} ^m \fH (\xi _i \tensor f_i,\xi _{j} \tensor f_j)= \sum _{i,j=1} ^m f_j^{\dagger} \tensor \tau _{j} ^{\ddagger} \left ( \fJ _{11} - \fJ_{12} \fJ_{22} ^{-1} \fJ_{21} \right ) f_i \tensor \tau _i,
\]
and the result follows.
\end{proof}

Before turning to the proof of Theorem \ref{dit-triv}, we introduce the notation 
\[
\sL _t := C\ell \left \{ f \in \Gamma (X, \sC^{\infty}(K_X \tensor E))\ ;\ \int _X |f|^2_{\fh ^{(t)}} < +\infty \right \}
\]
for the Hilbert space of not-necessarily holomorphic, square integrable $E$-valued $(n,0)$-forms on $X$, with respect to the $L^2$ norm associated to the metric $\fh^{(t)} := \fh (\cdot, t)$.  (Here $C\ell$ means the Hilbert space closure.)  We note that $\sH _t \subset \sL_t$ is a closed subspace, and thus there are orthogonal projections 
\[
P_t : \sL _t \to \sH_t \quad \text{and} \quad N_t = {\rm Id} - P_t: \sL _t \to \sH_t ^{\perp},
\]
often called the Bergman projection and the Neumann operator respectively.  

\begin{s-rmk}
As Berndtsson notes in \cite{bo-annals}, $\sL_t$ are the fibers of a holomorphic Hilbert bundle $\sL \to B$ that contains $\sH \to B$ as a subbundle.  Though we don't use the Gauss-Griffiths curvature formula, instead computing the curvature of $\sH$ directly, it is helpful to be aware of this picture.   
\red
\end{s-rmk}

\begin{proof}[Proof of Theorem \ref{dit-triv}]  The proof proceeds in two steps.
\
%The proof is carried out in two steps.  First, to exploit the Schur complement Proposition \ref{positivity-of-dets} we assume that the metrics $\fh^{(t)}$ are Nakano-positive, and then remove this assumption by a limiting argument in which we study the behavior of the Bergman projections slightly more carefully.

\medskip 

\noi {\sc Step 1:  the case in which the metrics $\fh^{(t)}$, $t \in B$, are Nakano-positive}.  
We use Proposition \ref{k-prop}.  We may assume that $B$ is a coordinate neighborhood of $o$ in $\bbC^m$ with coordinates $t^1,...,t^m$ and that $f_1,...,f_k \in \Gamma _{\cO} ( B, \sH(\fh))$ satisfy $(D^{1,0} f_j )(o)= 0$.

Let $A(\fh)$ denote the connection form of $\fh$.  Then
\[
D^{1,0}_{\frac{\di}{\di t ^j}} f =  P_{t} \left ( \frac{\di f }{\di t ^j} + \frac{\di}{\di t ^j} \lrcorner A(\fh)  f \right ).
\]
Writing $\frac{\di}{\di t ^j} \lrcorner A(\fh)  = A_j (\fh)$, we have
\begin{eqnarray*}
&& - \ii \di _{B}\dbar _{B} \left ( \int _{X_o}\left < f_{\lambda} , f_{\mu}\right >_{\fh^{(t)}}\right ) \wedge \Upsilon ^{\lambda \bar \mu} = \ii \dbar _{B} \di _{B}  \left ( \int _{X_o} \left < f_{\lambda} , f_{\mu}\right >_{\fh^{(t)}} \right ) \wedge \Upsilon ^{\lambda \bar \mu} \\
&=& \ii \dbar _{B} \left (\int _{X_o} \left <\frac{\di f_{\lambda}}{\di t^{\sigma}} + A_{\sigma} (\fh) f_{\lambda}, f_{\mu} \right >_{\fh^{(t)}}\right ) \wedge d t^{\sigma} \wedge \Upsilon^{\lambda \bar \mu}\\
&=& \left (\int _{X_o} \left < \left (\tfrac{\di}{\di \bar t ^{\nu}} A_{\sigma} (\fh) \right )f_{\lambda} , f_{\mu}\right >_{\fh^{(t)}} \right ) \ii d\bar t ^{\nu} \wedge dt ^{\sigma} \wedge \Upsilon^{\lambda \bar \mu}\\
&& \qquad  +\left (\int _{X_o} \left < \frac{\di f_{\lambda}}{\di t  ^{\sigma}} + A_{\sigma} (\fh) f_{\lambda},  \frac{\di f_{\mu} }{\di t ^{\nu}} + A_{\nu}(\fh) f_{\mu}\right >_{\fh^{(t)}}\right ) \ii d\bar t ^{\nu} \wedge dt^{\sigma} \wedge \Upsilon^{\lambda \bar \mu}\\
&=&   \left (\int _{X_o} \delta ^{\sigma \lambda} \delta ^{\bar \nu \bar \mu} \left < \left (-\tfrac{\di}{\di t ^{\sigma}} A_{\bar \nu} (\fh) \right ) f_{\lambda} , f_{\mu} \right >_{\fh^{(t)}} - \int _{X_o} \left |\delta^{\sigma \lambda} \left ( \frac{\di f_{\lambda}}{\di t  ^{\sigma}} + A_{\sigma} (h) f_{\lambda}\right ) \right |^2_{\fh^{(t)}} \right ) dV(t).
\end{eqnarray*}
By Pythagoras' Theorem 
\begin{eqnarray*}
&& \int _{X_o}\left  |\delta^{\sigma \lambda} \left ( \frac{\di f_{\lambda}}{\di t  ^{\sigma}} + A_{\sigma} (\fh) f_{\lambda}\right ) \right |^2_{\fh^{(t)}} \\
&=&  \int _{X_o}\left  |P_{t } \left ( \delta^{\sigma \lambda} \left ( \frac{\di f_{\lambda}}{\di t  ^{\sigma}} + A_{\sigma} (\fh) f_{\lambda}\right ) \right )\right |^2_{\fh^{(t)}}  +  \int _{X_o}\left  |N_{t } \left ( \delta^{\sigma \lambda} \left ( \frac{\di f_{\lambda}}{\di t  ^{\sigma}} + A_{\sigma} (\fh) f_{\lambda}\right ) \right )\right |^2_{\fh^{(t)}} \\
&=&  \int _{X_o}\left  |\delta ^{\sigma \lambda} D^{1,0} _{\frac{\di}{\di t  ^{\sigma}}} f_{\lambda}\right |^2_{\fh^{(t)}}  +  \int _{X_o}\left  |N_{t } \left ( \delta^{\sigma \lambda} A_{\sigma} (\fh) f_{\lambda}\right ) \right |^2_{\fh^{(t)}}.
\end{eqnarray*}
(Note that $N_t(\frac{\di f_{\lambda}}{\di t  ^{\sigma}}) = 0$ because $\frac{\di f_{\lambda}}{\di t  ^{\sigma}}$ is holomorphic.) 
Since $D^{1,0} _{\frac{\di}{\di t  ^{\sigma}}} f_{\lambda}$ vanishes at $t=o$, Theorem \ref{hormander-estimate} yields the estimate
\begin{eqnarray*}
&& \frac{1}{dV(o)} \left ( - \ii \di _{B}\dbar _{B} \left ( \int _{X_o}\left < f_{\lambda} , f_{\mu} \right >_{\fh^{(t)}} \right ) \wedge \Upsilon ^{\lambda \bar \mu} \right )_{t =o} \\
&& \ge \delta ^{\sigma \lambda} \delta ^{\bar \nu \bar \mu}  \int _{X_o} \!\!\!\! \left <\left (-\tfrac{\di}{\di t ^{\sigma}} A_{\bar \nu} (\fh) \right ) \! f_{\lambda} ,f_{\mu} \right >_{\fh^{(o)}}  \! - \! \delta ^{\sigma \lambda} \delta ^{\bar \nu \bar \mu}  \int _{X_o}\!\!\!\!  \left < (\Theta (\fh^{(o)})^{\sharp_g}) ^{-1} \dbar _{X_o} \!  A_{\sigma} (\fh) f_{\lambda}, \dbar _{X_o} A_{\nu} (\fh)\! f_{\mu} \right >_{\fh^{(o)}}\\
&&  =\int _{X_o} \!\!\!\! \delta ^{\sigma \lambda} \delta ^{\bar \nu \bar \mu} \left <\left [ \left (-\tfrac{\di}{\di t ^{\sigma}} A_{\bar \nu} (\fh) \right ) - \left (  \dbar _{X_o} A_{\nu} (\fh) \right ) ^{\dagger} (\Theta (\fh^{(o)})^{\sharp _g})^{-1} \dbar _{X_o}  A_{\sigma} (\fh)\right ]  f_{\lambda}, f_{\mu} \right >_{\fh^{(o)}} .
\end{eqnarray*}
But by Proposition \ref{positivity-of-dets} the integrand in the last line is non-negative (resp. positive) when $\fh$ is $k$-nonnegative (resp. $k$-positive).  By Proposition \ref{k-prop} Theorem \ref{dit-triv} is proved in the positive case.

\medskip 

\noi {\sc Step 2: The general, nonnegative case.}
As the ambient manifold $Y_o$ is Stein, there exists a strictly plurisubharmonic exhaustion $\rho : Y_o \to \bbR$ so that the metrics $e^{-\ve \rho} \fh ^{(t)}$ are strictly Nakano-positive on the fibers of $X_o \times B \to B$.  Let us write $\fh _{\ve} := e^{- \ve p _1 ^* \rho} \fh$, where $p _1 : X_o \times B \to X_o$ is the projection.   As in the case $\ve = 0$, 
\begin{eqnarray*}
&& - \frac{1}{dV(t)}\ii \di _{B}\dbar _{B} \left ( \int _{X_o}\left < f^{\ve}_{\lambda} , f^{\ve}_{\mu}\right >_{\fh^{(t)}_{\ve}}\right ) \wedge \Upsilon ^{\lambda \bar \mu} \\
&=& \int _{X_o} \delta ^{\sigma \lambda} \delta ^{\bar \nu \bar \mu} \left < \left (-\tfrac{\di}{\di t ^{\sigma}} A_{\bar \nu} (\fh_{\ve} ) \right ) f^{\ve}_{\lambda} , f^{\ve}_{\mu} \right >_{\fh^{(t)}_{\ve}} -\int _{X_o}\left  |\delta ^{\sigma \lambda} D^{\ve} _{\frac{\di}{\di t  ^{\sigma}}} f^{\ve}_{\lambda}\right |^2_{\fh^{(t)}_{\ve}}  -  \int _{X_o}\left  |N^{\ve}_{t } \left ( \delta^{\sigma \lambda} A_{\sigma} (\fh_{\ve}) f^{\ve}_{\lambda}\right ) \right |^2_{\fh^{(t)}_{\ve}}.
\end{eqnarray*}
Here $f^{\ve}_{\lambda} \in \cO (E)_o$ depend smoothly on $\ve$ and have $\fh_{\ve}$-covariant derivative $D^{\ve}$ that vanishes at $t= o$.  As above, the right hand side is (semi-)positive at $o$ if $\fh _{\ve}$ is $m$-(semi-)positive.  It therefore suffices to show that 
\begin{equation}\label{cty-of-projection}
\lim _{\ve \to 0}  \int _{X_o} \left |  N_t ^{\ve} f \right |^2 _{\fh _{\ve} ^{(t)}} = \int _{X_o} \left |  N_t f \right |^2 _{\fh ^{(t)}}  \quad \text{for any $f \in L ^2(\fh _{\ve} ^{(t)}) = L ^2(\fh ^{(t)})$}.
\end{equation}
To establish \eqref{cty-of-projection}, note that $N^{\ve}_t f$ is the nearest section $F_{\ve}$ to $f$, in the $L ^2(\fh _{\ve} ^{(t)})$-distance, that satisfies 
\[
\int _{X_o} \fh _{\ve}^{(t)}( F_{\ve} , g ) = 0 \quad \text{for all } g\in \sH ^2(\fh _{\ve})_t = \sH ^2(\fh)_t.
\]
Therefore 
\[
\int _{X_o} e^{- \ve \rho} \fh ^{(t)} (N^{\ve}_t f, g ) = 0 \quad \text{for all } g\in \sH ^2(\fh)_t.
\]
On the other hand, $N_t f$ is the nearest section $F$ to $f$, in the $L ^2(\fh^{(t)})$-distance, that satisfies
\[
\int _{X_o} \fh (F,  g) = 0 \quad \text{for all } g\in \sH^2(\fh)_t.
\]
Therefore 
\[
||N_tf - f||\le ||e^{- \ve \rho}N_t^{\ve} f -f|| \le || e^{- \ve \rho}(N_t^{\ve} f -f )|| + ||(1-e^{- \ve \rho})f|| \le ||N_t^{\ve}f - f|| + (e^{\ve M}-1)||f||,
\]
where $M = \sup _{X_o} \rho$ and $|| \cdot ||$ denotes the norm in $L ^2(\fh^{(t)})$.  Similarly 
\[
||N_t^{\ve}f - f||_{\ve} \le e^{\ve M} ||N_tf- f||_{\ve} + (e^{\ve M} -1)||f||_{\ve},
\]
where $|| \cdot ||_{\ve}$ denotes the norm in $L ^2(\fh_{\ve}^{(t)})$.  One also has the estimates 
\[
||N_t ^{\ve}f - f||_{\ve} \ge e^{-\ve M} ||N_t ^{\ve}f - f|| \quad \text{and} \quad ||f||_{\ve} \le ||f||.
\]
Taking limits shows that $\displaystyle{\lim_{\ve \to 0} ||N_t^{\ve} f - f|| = ||N_tf-f||}$.  Since $N_t^{\ve}f = e^{-\ve \rho} N_t^{\ve} f + (1-e^{-\ve\rho})N_t^{\ve}f$ and $||N_t^{\ve}f ||^2 \le e^{\ve M} ||N_t^{\ve} f||^2_{\ve} \le e^{\ve M} ||f||^2_{\ve} \le e^{\ve M} ||f||^2$, $N_t^{\ve} f$ converges to an element of $\sH_t^{\perp}$, so we must have $\displaystyle{\lim_{\ve \to 0} N_t^{\ve}f = N_tf}$, and \eqref{cty-of-projection} follows by Lebesgue's Dominated Convergence Theorem.  The proof of Theorem \ref{dit-triv} is complete. 
\end{proof}

\section{Proofs of Theorem \ref{OTM}}\label{OTM-proofs-section}

\subsection{Proof as a corollary of \cite[Theorem 2.1]{gz}}

We refer to \cite{gz} for the notation and the following result (except that our complex manifold is $X$ and our hypersurface is $Z$).

\begin{thm}[\text{\cite[Theorem 2.1 in the case of hypersurfaces]{gz}}]\label{gz-thm}
Let $X$ be a Stein manifold, let $Z \subset X$ be a closed complex hypersurface and let $E \to X$ be a holomorphic vector bundle with smooth Hermitian metric $\fh$.  Fix a function $\Psi \in \# _A(Z) \cap \sC^{\infty}(X\setminus Z)$ satisfying 
\begin{enumerate}[{\rm (i)}]
\item $e^{-\Psi} \fh$ is semipositive in the sense of Nakano on $X\setminus Z$, and 
\item there exists a continuous function $a: (-A, +\infty] \to \bbR$ such that $0 < a(t) \le s(t)$ and 
\[
a(-\Psi) \ii \Theta (e^{-\Psi}\fh) + \ii \di \dbar \Psi \ge _{\rm Nak} 0 \quad \text{on }X \setminus Z,
\]
where 
\[
s(t) = \frac{\int _{-A} ^t \left ( \frac{1}{\delta}c_A(-A)e^A + \int _{-a} ^{t_2} c_A(t_1) e^{-t_1}dt_1 \right ) dt_2 + \frac{1}{\delta ^2}c_A(-A)e^A}{\frac{1}{\delta} c_A(-A)e^A + \inf _{-A} ^t c_A(t_1)e^{-t_a} dt_1}.
\]
\end{enumerate} 
Then for any $f \in H^0(Z, \cO _Z((K_X \tensor E)|_Z))$ satisfying 
\[
\pi \int _Z |f|^2_{\fh,dV_M} dV_M[\Psi] < +\infty
\]
there exists $F \in H^0(X, \cO _X(K_X \tensor E))$ such that 
\[
F|_Z = f \quad \text{and} \quad \int _X c_A(-\Psi) |F|^2_{\fh} \le \left (\frac{e^A}{\delta} c_A(-A) + \int _{-A} ^{\infty} c_A(t) e^{-t} dt \right ) \pi \int _Z |f|^2_{\fh,dV_M} dV_M[\Psi].
\]
Here $\delta > 0$ is any given number, and $c_A\in \sC^{\infty}((-A, +\infty))$ is any positive function such that 
\begin{enumerate}[{\rm (a)}]
\item $\displaystyle{\int _{-A} ^{+\infty} c_A(t) e^{-t} dt < +\infty}$,  and 
\item 
\begin{align*}
& \left (\frac{e^A}{\delta} c_A(-A) + \int _{-A} ^t c_A(t_1)e^{-t_1} dt_1 \right )^2 \\
& \qquad > c_A(t)e^{-t} \left ( \int _{-A} ^t \left ( \frac{e^A}{\delta} c_A(-A) + \int _{-A} ^{t_2}c_A(t_1)e^{-t_1}dt_1\right ) dt_2 + \frac{e^A}{\delta ^2}c_A(-A) \right ).
\end{align*}
\end{enumerate}
\end{thm}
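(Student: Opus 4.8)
The statement is \cite[Theorem 2.1]{gz} specialized to a smooth hypersurface, so the plan is to recapitulate the Guan--Zhou argument. \emph{Step 1 (reductions).} First I would exhaust the Stein manifold $X$ by relatively compact pseudoconvex open sets $X_j \relcomp X$, regularize $\fh$ so that it is smooth up to $\di X_j$, and tame the logarithmic pole of $\Psi$ along $Z$ by smooth functions $\widetilde\Psi_\nu$ decreasing to $\Psi$ (built, say, from $\max(\Psi,-\nu)$ and a mollification). The extension problem is solved on each model $(X_j,\widetilde\Psi_\nu)$ with a constant \emph{independent} of $j$ and $\nu$, and one passes to the limit by a normal-families argument in the spirit of Berndtsson--Lempert: Montel furnishes a holomorphic limit $F$ on $X$, Fatou recovers the $L^2$ bound, and that bound simultaneously forces $\int_X c_A(-\Psi)|F|^2_\fh < +\infty$ and $F|_Z = f\wedge dT$. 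The hypothesis $\Psi \in \#_A(Z)$ is what makes the residue measure $dV_M[\Psi]$ well defined, and condition (a) is what keeps the constant on the right-hand side finite through these limits.

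\emph{Step 2 (twisted $L^2$ estimate).} On a fixed model write $v := -\widetilde\Psi_\nu$, introduce the twist factor $\tau := s(v)$, and couple it to a companion positive function determined by the ODE below. The heart of the matter is the weighted twisted Bochner--Kodaira--Nakano--H\"ormander inequality for $E$-valued $(n,1)$-forms: writing $\ii\Theta(e^{-\Psi}\fh) = \ii\Theta(\fh) + \ii\di\dbar\Psi\tensor{\rm Id}_E$ and completing the square in $\tau$, the curvature contribution reorganizes into the operator $a(v)\,\ii\Theta(e^{-\Psi}\fh) + \ii\di\dbar\Psi$, which is Nakano-semipositive by hypothesis (ii), while $0 < a(t)\le s(t)$ keeps the twist term dominant. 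As we are on $(n,1)$-forms, Demailly's Proposition \ref{dem-pos-calc} (case $q=1$) upgrades this to pointwise semipositivity of the coefficient operator on $\Lambda^{n,1}_X\tensor E$, with the twist supplying the strictness needed to invert it; a twisted refinement of the H\"ormander--Skoda--Demailly machinery of Theorem \ref{skoda-est} then solves $\dbar u = \beta$ with an estimate of the shape $\int_X c_A(v)|u|^2_\fh \le \int_X c_A(v)\left< (\text{coef.})^{-1}\beta,\beta\right>_\fh$.

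\emph{Step 3 (approximate extension and residue normalization).} Since $X$ is Stein, extend $f$ to a global holomorphic section $\tilde f \in H^0(X,\cO_X(K_X\tensor E))$; with a cutoff $\chi\equiv 1$ near $Z$, supported in a tubular neighborhood and keyed to the truncation level of $\Psi$, set $F_0:=\chi\tilde f$ and $\beta:=\dbar F_0$, supported in the transition annulus at finite distance from $Z$. Apply the estimate of Step 2 with the weight carrying the pole $e^{-\Psi}$ along $Z$: finiteness of a $u$-norm that blows up along $Z$ (carried by that pole) forces $u|_Z = 0$, so $F:=F_0-u$ is holomorphic with $F|_Z = f\wedge dT$. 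The right-hand side $\int_X c_A(v)\left<(\text{coef.})^{-1}\beta,\beta\right>_\fh$ is then evaluated by a Poincar\'e--Lelong / coarea computation in the tube around $Z$; as the truncation and regularization parameters tend to their limits it converges to exactly $\left(\tfrac{e^A}{\delta}c_A(-A) + \int_{-A}^{\infty}c_A(t)e^{-t}\,dt\right)\pi\int_Z |f|^2_{\fh,dV_M}\,dV_M[\Psi]$, which is how the factor $\pi$ and the residue measure $dV_M[\Psi]$ enter.

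\emph{Main obstacle.} The delicate point --- the reason this is not a routine $L^2$ argument --- is the choice of the three auxiliary functions $(a,s,c_A)$: they must be arranged so that the error terms produced by completing the square cancel \emph{identically} rather than being merely bounded, which is exactly what promotes the classical Ohsawa-type constant to the optimal one, and conditions (a), (b) together with $0<a(t)\le s(t)$ and the explicit formula for $s$ are the integrated forms of the ODE system enforcing this cancellation. A second, genuinely technical, difficulty is keeping every estimate uniform along the regularization of the singular weight $\Psi$ and along the exhaustion of $X$, and checking that the residue integral on the right converges to the asserted value; once the ODE calculus is in place, this bookkeeping is the bulk of the work.
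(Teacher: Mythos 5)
This theorem is not proved in the paper: it is stated as a citation of \cite[Theorem 2.1]{gz}, and Section \ref{OTM-proofs-section} merely specializes it (taking $\Psi = \log|T|^2_\fg$, $A=0$, $c_A\equiv 1$, and identifying $dV_M[\Psi]$ with the residue measure $\frac{dA_g}{|dT|^2_{\fg,g}}$ via the co-area formula) to deduce Theorem \ref{OTM}. So there is no proof in the paper to compare your outline against; the intended ``proof'' here is simply the reference.

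As an account of how Guan and Zhou prove their theorem, your sketch is broadly on target: exhaust and regularize, run a twisted Bochner--Kodaira--H\"ormander estimate with auxiliary functions $a,s,c_A$ chosen so that an ODE forces identical cancellation of the error terms (this is what yields the optimal constant), solve $\dbar$ for a cutoff approximate extension, and evaluate the right-hand side by a residue/co-area computation near $Z$. Two small points. First, in this formulation $f$ is already a section of $(K_X\tensor E)|_Z$, so the required boundary condition is $F|_Z = f$, not $F|_Z = f\wedge dT$; the factor $dT$ belongs to the transition to the adjoint statement of Theorem \ref{OTM}, which the paper carries out after invoking the theorem. Second, your outline does not derive conditions (a) and (b) or the explicit form of $s(t)$, which are the output of the ODE analysis and are precisely what make the Guan--Zhou constant optimal; that is the hard part, and the part most worth spelling out if one actually wished to reprove the theorem rather than cite it.
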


The function $\Psi$ in our case is just 
\[
\Psi := \log |T|^2_{\fg}.
\]
We claim that Ohsawa's measure $dV_M[\Psi]$ is just 
\[
dV_M[\Psi] = \frac{dV_M}{dV_g}\frac{dA_g}{|dT|^2_{\fg, g}}.
\]
To prove the claim, we need to show that any nonnegative continuous function $\chi \in \sC_o(X)$ we have 
\[
\limsup _{t \to \infty} \frac{2}{\pi} \int _{X\setminus Z} \chi e^{-\Psi}\mathbf{1}_{\{-1-t< \Psi < -t\}} dV_g = \int _Z \frac{\chi dA_g}{|dT|^2_{\fg,g}}.
\]
But for $t >> 0$ we have 
\begin{eqnarray*}
\frac{2}{\pi} \int _{X\setminus Z} \chi e^{-\Psi}\mathbf{1}_{\{-1-t< \Psi < -t\}} dV_g
&=& \frac{1}{2 \pi}  \int _{\{ e^{-t-1} \le |T|^2_{\fg} \le e^{-t}\}} \frac{\chi}{|T|^2_{\fg}} \frac{\left < \ii \nabla T \wedge \nabla \bar T, \fg \right > }{|\nabla T|^2_{\fg,d}} \wedge \frac{ \omega _g ^{n-1}}{(n-1)!}\\
&=& \int _{e^{-t-1}}^{e^{-t}} \int _Z\left ( \frac{\chi}{|\nabla T|^2_{\fg,g}} \frac{\omega_g^{n-1}}{(n-1)!} \right ) \frac{dr}{r} \\
&= & \left (\int _{e^{-t-1}}^{e^{-t}} \frac{dr}{r} \right ) \left ( \int _Z  \chi \frac{dA_g}{|dT|^2_{\fg, g}}  + o (1) \right ) = \int _Z  \chi \frac{dA_g}{|dT|^2_{\fg, g}} + o(1).
\end{eqnarray*}
(This calculation is often called the \emph{Co-area Formula}.)

Away from $Z$ one has 
\[
\ii \di \dbar \Psi = - \ii\Theta (\fg) \quad \text{and} \quad \ii \Theta (e^{-\Psi}\fh) = \ii \Theta (\fh) - \ii \Theta (\fg).
\]
Thus to achieve our curvature conditions we must take 
\[
a(t) = \frac{1}{\delta}.
\]
We take $A=0$ and $c_A (t) \equiv 1$.  It is easy, though perhaps a bit tedious, to check that all of the numerous requirements are satisfied by these choices.  We therefore see that for each $f$ we get an extension $F$ satisfying 
\[
\int _X |F|^2_{\fh, g} dV_g \le \left ( \frac{1}{\delta} + 1\right ) \pi \int _Z \frac{|f|^2_{\fh,g} dA_g}{|dT|^2_{\fg,g}}.
\]
After an application of the Adjunction Formula, Theorem \ref{OTM} follows.

\subsection{A second proof, under stronger curvature hypotheses}

In this paragraph we give a proof of Theorem \ref{OTM} under the stronger curvature hypothesis 
\begin{equation}\label{curv-hyp-OTBL-vb-c}
\ii \Theta (\fh) \ge_{{\rm Nak}} \delta \ii \Theta (\fg) \tensor {\rm Id}_E\ge_{{\rm Nak}} 0.
\end{equation}
This hypothesis implies the curvature condition \eqref{curv-hyp-OT}, but can be strictly stronger.  The proof follows the ideas of Berndtsson and Lempert \cite{bl}.

\subsubsection{\sc The non-canonical formulation of Theorem \ref{OTM}}%\label{non-canonical-formulation-par}

The canonical (or \emph{adjoint}) formulation of $L^2$ extension, i.e., the question of extending sections of $K_Z \tensor E|_Z$ to sections of $K_X \tensor L_Z \tensor E$, in which Theorem \ref{OTM} is stated, is useful in complex geometry in part because it allows one to define $L^2$ norms without selecting a volume form.  However, this feature ceases to be convenient when one wants to change coordinates `almost everywhere', because the canonical bundle does not remain invariant under such transformations.  In such and perhaps other situations, the following equivalent formulation can be convenient.

\begin{thm}\label{OTMnc}
Let $X$ be an essentially Stein manifold equipped with a K\"ahler metric $g$, let $Z$ be a smooth hypersurface cut out by a holomorphic section $T \in H^0(X, \cO (L_Z))$, and let $\fg$ be a smooth metric for $L_Z \to X$ such that 
\[
\sup _X |T|^2_{\fg} \le 1.
\]
Let $E \to X$ be a holomorphic vector bundle with smooth Hermitian metric $\fh$ such that 
\begin{equation}\label{curv-hyp-OTnc}
\ii \Theta (\fh) + {\rm Ricci}(g) \ge_{{\rm Nak}} (1+t \delta) \ii \Theta (\fg) \tensor {\rm Id}_E \quad \text{for all }t \in [0,1].
\end{equation}
Then for every $f \in H^0(Z, \cO_Z (E|_{Z}))$ satisfying
\[
\int _Z \frac{|f|^2_{\fh}dA_{g}}{|dT|^2_{\fg,g}} < +\infty
\]
there exists $F \in H^0(X, \cO _X(E))$ such that 
\[
F|_Z = f \quad \text{and} \quad \int _X |F|^2_{\fh} dV_{g}\le \frac{\pi (1+\delta)}{\delta} \int _Z \frac{|f|^2_{\fh} dA_{g}}{|dT|^2_{\fg,g}}.
\]
\end{thm}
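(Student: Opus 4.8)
The plan is to derive Theorem~\ref{OTMnc} from Theorem~\ref{OTM}, which has just been proved; in fact the two statements are equivalent, and this equivalence is the reason for introducing the non-canonical formulation. The mechanism is a twist by the line bundle $K_X^{-1}\tensor L_Z^{-1}$: tensoring $E$ by it converts the adjoint formulation into the non-canonical one, the curvature of $K_X^{-1}$ supplies the ${\rm Ricci}(g)$ term appearing in \eqref{curv-hyp-OTnc}, and the curvature of $L_Z^{-1}$ supplies the shift of $t\delta$ to $1+t\delta$.

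Given the data $(X,g,Z,T,\fg,E,\fh)$ of Theorem~\ref{OTMnc}, I would put $\hat E:=K_X^{-1}\tensor L_Z^{-1}\tensor E$ and equip it with $\hat\fh:=\mu_g\tensor\fg^{-1}\tensor\fh$, where $\mu_g$ is the metric on $K_X^{-1}$ induced by $g$, normalized so that $\ii\Theta(\mu_g)={\rm Ricci}(g)$. Curvature is additive under tensor products, so
\[
\ii\Theta(\hat\fh)=\ii\Theta(\fh)+\bigl({\rm Ricci}(g)-\ii\Theta(\fg)\bigr)\tensor{\rm Id}_E,
\]
and hence \eqref{curv-hyp-OTnc} is exactly the assertion that $\ii\Theta(\hat\fh)\ge_{\rm Nak}t\delta\,\ii\Theta(\fg)\tensor{\rm Id}_{\hat E}$ for all $t\in[0,1]$; that is, $(\hat E,\hat\fh)$ satisfies hypothesis \eqref{curv-hyp-OT} of Theorem~\ref{OTM} with the same $\fg$, $T$ and $\delta$ (and $\sup_X|T|^2_{\fg}\le 1$ is untouched).

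Next I would match the spaces and the norms. By adjunction $(K_X\tensor L_Z)|_Z\cong K_Z$ (the Poincar\'e residue, implemented by dividing by $dT$), which yields canonical identifications $K_X\tensor L_Z\tensor\hat E=E$ on $X$ and $K_Z\tensor\hat E|_Z=E|_Z$ on $Z$. Under these, a section $F$ of $E$ is a section of $K_X\tensor L_Z\tensor\hat E$, a section $f$ of $E|_Z$ is a section $\hat f$ of $K_Z\tensor\hat E|_Z$, the condition $\hat F|_Z=\hat f\wedge dT$ becomes $F|_Z=f$, and a short computation in a local holomorphic frame shows $\ii^{n^2}\langle\hat F\wedge\overline{\hat F},\hat\fh\tensor\fg\rangle=|F|^2_{\fh}\,dV_g$ --- the $K_X^{-1}$ factor of $\hat\fh$ contributing the Jacobian $\det(g_{i\bar j})$ --- while on $Z$ the residue computation, i.e.\ the same co-area argument used above in deriving Theorem~\ref{OTM} from \cite{gz}, gives $|\hat f|^2_{\hat\fh}=\frac{|f|^2_{\fh}\,dA_g}{|dT|^2_{\fg,g}}$. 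Feeding this into Theorem~\ref{OTM} applied to $(\hat E,\hat\fh)$ and translating back produces $F\in H^0(X,\cO_X(E))$ with $F|_Z=f$ and $\int_X|F|^2_{\fh}\,dV_g\le\frac{\pi(1+\delta)}{\delta}\int_Z\frac{|f|^2_{\fh}\,dA_g}{|dT|^2_{\fg,g}}$. Running the argument with $E$ replaced by $K_X\tensor L_Z\tensor E$ recovers Theorem~\ref{OTM}, so the two are indeed equivalent.

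There is no analytic content here beyond Theorem~\ref{OTM}; the only steps requiring care are the sign conventions in the curvature identity for $\hat\fh$ --- in particular the relation between ${\rm Ricci}(g)$ and the curvature of the $g$-induced metric on $K_X$ --- and the two pointwise identifications of the canonical $L^2$ norms with the volume-weighted ones. I expect the boundary identification, the one producing the factor $|dT|^2_{\fg,g}$, to be the main piece of bookkeeping, but it is precisely the Poincar\'e-residue/co-area computation already carried out above. One could instead sidestep Theorem~\ref{OTM} and prove Theorem~\ref{OTMnc} directly by the Berndtsson--Lempert method --- which is what would make the resulting argument a genuinely independent ``second proof'' of Theorem~\ref{OTM} --- relying on the positivity-of-direct-images theorem, Theorem~\ref{dit-triv}, together with Theorem~\ref{hormander-estimate} in place of \cite{gz}; presumably the subsections that follow do exactly that.
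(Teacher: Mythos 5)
Your derivation of Theorem~\ref{OTMnc} from Theorem~\ref{OTM} by twisting $E$ with $K_X^{-1}\tensor L_Z^{-1}$, matching curvatures via $\ii\Theta(\hat\fh)=\ii\Theta(\fh)+({\rm Ricci}(g)-\ii\Theta(\fg))\tensor{\rm Id}_E$, and transporting the $L^2$-norms through adjunction and the co-area identity is exactly the equivalence recorded in the unnumbered remark preceding the flat-case proof (with $\cE=K_X^*\tensor L_Z^*\tensor E$ and $\mathbf{h}=dV_\omega\tensor\fg^*\tensor\fh$). You also read the structure of the section correctly: the subsequent paragraphs re-prove Theorem~\ref{OTMnc} by the Berndtsson--Lempert degeneration method using Theorems~\ref{dit-triv} and~\ref{hormander-estimate}, but only under the stronger hypothesis~\eqref{curv-hyp-OTBL-vb}, which is precisely why the subsection's title announces ``stronger curvature hypotheses.''
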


\begin{s-rmk}
The equivalence of Theorems \ref{OTMnc} and \ref{OTM} is obtained as follows.  Let $\cE := K_X^* \tensor L_Z^* \tensor E$, equipped with the metric $\mathbf{h} := dV_{\omega}\tensor \fg^* \tensor \fh$.  Then 
\[
E = K_X \tensor L_Z \tensor \cE, \quad E|_Z= K_Z \tensor \cE|_Z \quad \text{and} \quad  \Theta (\mathbf{h}) = \Theta (\fh) + {\rm Ricci}(\omega)  - \Theta (\fg);
\]
the second equality is of course the adjunction formula which, at the level of sections, identifies a section $f$ of $E|_Z = (K_X \tensor L_Z \tensor \cE)|_Z$ with the section $f/dT$ of $K_Z \tensor \cE|_Z$.  Thus 
\[
\ii \Theta (\mathbf{h}) - t\delta \ii \Theta (\fg) = \ii \Theta (\fh) + {\rm Ricci}(\omega) - (1+t\delta)\ii \Theta (\fg) \quad \text{and} \quad |f|^2_{\mathbf{h}} = \frac{|f|^2 _{\fh}dA_{\omega}}{|dT|^2_{\fg, \omega}},
\]
where the second equality is by the co-area formula.  In particular, the metrics $\fh$ and $g$ satisfy \eqref{curv-hyp-OTnc} if and only if the metric $\mathbf{h}$ satisfies the curvature condition \eqref{curv-hyp-OT} of Theorem \ref{OTM}.

Suppose then that Theorem \ref{OTM} holds.  Given $f \in H^0(Z, \cO (E|_Z)) = H^0(Z, \cO (K_Z \tensor \cE|_Z))$, Theorem \ref{OTM} provides $F \in H^0(X, \cO (K_X \tensor L_Z \tensor \cE))=H^0(X, \cO (E))$ such that $F|_Z = f$ and 
\[
\int _X |F|^2_{\fh} dV_{\omega} = \int _X |F|^2_{\fg \tensor \mathbf{h}} \le \frac{\pi (1+\delta)}{\delta} \int _Z |f|^2 _{\mathbf{h}} = \frac{\pi (1+\delta)}{\delta} \int _Z \frac{|f|^2_{\fh} dA_{\omega}}{|dT|^2_{\fg,\omega}},
\]
so Theorem \ref{OTMnc} holds.  The steps can be reversed to deduce Theorem \ref{OTM} from Theorem \ref{OTMnc}.
\red
\end{s-rmk}

Under the equivalence just outlined, the curvature condition \eqref{curv-hyp-OTBL-vb-c} is replaced by 
\begin{equation}\label{curv-hyp-OTBL-vb}
\ii \Theta (\fh) + {\rm Ricci}(\omega) - \ii \Theta (\fg) \tensor {\rm Id}_E \ge_{{\rm Nak}} \delta \ii \Theta (\fg) \tensor {\rm Id}_E\ge_{{\rm Nak}} 0.
\end{equation}
The proof then unfolds in two steps.  
\begin{enumerate}[i.]
\item In the first step we assume that the hypersurface $Z$ is cut out by a bounded holomorphic function; we call this the \emph{flat case}, since the normal bundle $L_Z \to X$ is trivial, and we can take the metric $\fg$ to be flat.
\item In the second step we reduce to the flat case using an idea of Grauert.
\end{enumerate}

\subsubsection{\sc The Flat Case}

We begin by proving the following special case of Theorem \ref{OTMnc}. 

\begin{thm}\label{ot-vb-flat}
Let $X$ be a Stein manifold with K\"ahler metric $g$ and let $T : X \to \bbD$ be a holomorphic function such that $dT$ is not identically zero on any irreducible component of $Z := T^{-1}(0)$.  Let $E \to X$ be a holomorphic vector bundle with smooth Hermitian metric $\fh$ such that 
\[
\ii \Theta (\fh) + {\rm Ricci}(\omega) \ge_{{\rm Nak}} 0.
\]
Then for any $f \in H^0 (Z ,\cO(E))$ such that 
\[
\int _Z \fh (f,f)\frac{dA_{g}}{|dT|^2_{g}} < +\infty
\]
there exists $F \in H^0(X, \cO(E))$ such that 
\[
F|_Z = f \quad \text{and} \quad \int _X \fh(F, F)dV_{g} \le \pi \int _Z \fh (f,f)\frac{dA_{g}}{|dT|^2_{g}}.
\]
\end{thm}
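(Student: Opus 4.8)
The plan is to follow the method of Berndtsson and Lempert \cite{bl}, substituting for their use of Berndtsson's line-bundle positivity of direct images its vector-bundle analogue for trivial families, Theorem \ref{dit-triv}.

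\emph{Reformulation and reduction to a relatively compact base.} The curvature hypothesis $\ii\Theta(\fh)+{\rm Ricci}(\omega)\ge_{\rm Nak}0$ says exactly that $\cE:=E\tensor K_X^*$, with the metric $\mathbf h$ that $\fh$ and $g$ induce, satisfies $\ii\Theta(\mathbf h)\ge_{\rm Nak}0$; under $E=K_X\tensor\cE$ and $E|_Z=K_Z\tensor\cE|_Z$ (the latter via $f\mapsto f/dT$, since $L_Z$ is trivial here) the $L^2$ quantities in the statement are the intrinsic norms of the $\cE$-valued $(n,0)$-form attached to $F$ and the $\cE$-valued $(n-1,0)$-form attached to $f$, so we are extending canonically twisted sections of a Nakano-nonnegative bundle --- the natural input for the direct-image machinery. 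Next, exhaust $X$ by relatively compact, smoothly bounded, strictly pseudoconvex open sets $X_1\relcomp X_2\relcomp\cdots$ with union $X$ and $Z\cap X_j\neq\emptyset$: it suffices to solve the problem on each $X_j$ with constant $\pi$, since Montel's theorem produces a locally uniform limit $F\in H^0(X,\cO(E))$ with $F|_Z=f$ and lower semicontinuity of the $L^2$ norm yields the estimate. So assume $X$ is relatively compact in a Stein manifold $Y$, that $\psi:=\log|T|^2$ is plurisubharmonic with $\sup_X\psi<0$, and that $\fh,g$ extend smoothly past $\di X$.

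\emph{The degenerating family and its positivity.} Put $B:=\{\tau\in\bbC:\Re\tau<0\}$ and fix a smooth convex nondecreasing $\chi:\bbR\to\bbR$ with $\chi\equiv0$ on $(-\infty,0)$ and $\chi(s)\to+\infty$ as $s\to+\infty$. Equip $p_1^*E\to X\times B$ with $\fh_B:=e^{-\chi(p_1^*\psi-\Re\tau)}\,p_1^*\fh$; since $\chi\equiv0$ near $-\infty$ this is a genuine smooth metric (the singularity of $\psi$ along $Z$ is killed), and $\chi(\psi-\Re\tau)$ is plurisubharmonic on $X\times B$, being the composition of the convex nondecreasing $\chi$ with the plurisubharmonic function $\psi-\Re\tau$. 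Consequently each fibre metric $\fh^{(\tau)}=e^{-\chi(\psi-\Re\tau)}\fh$ is Nakano-nonnegative ($\fh$ is, and $\ii\di\dbar\chi(\psi-\Re\tau)\tensor{\rm Id}_E$ is Nakano-nonnegative for fixed $\tau$), so hypothesis (a) of Theorem \ref{dit-triv} holds, and $\fh_B$ is $1$-nonnegative on $X\times B$ ($p_1^*\fh$ is, and $\chi(p_1^*\psi-\Re\tau)$ is plurisubharmonic), so hypothesis (b) holds with $k=1$. Theorem \ref{dit-triv} then gives that the bundle $\sH\to B$ with fibre the Bergman space $\sH_\tau:=\{G\in H^0(X,\cO(E)):\int_X|G|^2_{\fh^{(\tau)}}dV_g<\infty\}$ under the $L^2(\fh^{(\tau)})$ metric --- a space independent of $\tau$, since $X$ is relatively compact and $\chi$ is bounded on the relevant range --- is Griffiths-nonnegative. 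The family degenerates along $Z$: for $\Re\tau\in(\sup_X\psi,0)$ one has $\fh^{(\tau)}=\fh$ (the honest regime), while as $\Re\tau\to-\infty$ the weight concentrates on the tube $\{|T|^2<e^{\Re\tau}\}$ about $Z$, whose transverse radius is $e^{\Re\tau/2}\to0$.

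\emph{Concavity and the endpoints.} Let $\cA:=\{G\in H^0(X,\cO(E)):G|_Z=f\}$ and for $\tau\in B$ set $\Phi(\tau)^2:=\inf\{\|G\|^2_{\fh^{(\tau)}}:G\in\cA\}$, attained at a unique $F_\tau$; $\Phi$ depends only on $t:=\Re\tau$. The Griffiths-nonnegativity of $\sH$, run through the Hahn--Banach/duality argument of Berndtsson and Lempert --- the minimal norm over the affine family $\cA$ is computed from constant sections of $\sH$ and of the seminegative dual bundle $\sH^*$ --- should yield that $\log\Phi(t)^2$ is concave on $(-\infty,0)$; hence so is $\tilde g(t):=\log(e^{-t}\Phi(t)^2)$. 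Its value at the honest end is $\lim_{t\to0^-}\tilde g(t)=\log A$, where $A$ is the minimal $\fh$-norm of a holomorphic extension of $f$ --- the quantity we must bound --- while a co-area computation in the spirit of the evaluation of Ohsawa's measure $dV_M[\Psi]$ in Section \ref{OTM-proofs-section} gives $\lim_{t\to-\infty}\tilde g(t)=\log\big(\pi\int_Z\fh(f,f)\,dA_g/|dT|^2_g\big)$, the $\pi$ being the area of the unit disc. A concave function on $(-\infty,0)$ with a finite limit at $-\infty$ is nonincreasing there, so $\lim_{t\to0^-}\tilde g(t)\le\lim_{t\to-\infty}\tilde g(t)$, i.e.\ $A\le\pi\int_Z\fh(f,f)\,dA_g/|dT|^2_g$; applying this to each $X_j$ and passing to the limit completes the proof. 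I expect the main obstacle to be the concavity of $\log\Phi(t)^2$: this is the technical heart of the Berndtsson--Lempert scheme, and carrying it out here means combining Theorem \ref{dit-triv} with a duality argument that correctly handles minimization over the affine subbundle cut out by the constraint $G|_Z=f$; a secondary subtlety is making the co-area evaluation at the degenerate end precise enough to land the sharp constant $\pi$ and to ensure, in the exhaustion limit, that the limiting section restricts to $f$ itself.
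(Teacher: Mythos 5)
Your overall architecture is the same as the paper's: reduce to a relatively compact Stein domain, build a degenerating family of weights on the trivial product $X\times\bbL$, invoke Theorem~\ref{dit-triv} with $k=1$, and play the endpoints $t\to 0^-$ and $t\to-\infty$ against each other. But the step you flag as ``the technical heart'' --- the claim that $\log\Phi(t)^2$ is concave --- is not what the machinery delivers, and it is exactly where the paper goes a different way. Griffiths-nonnegativity of $\sH\to B$ does \emph{not} imply that $\log\|s\|^2$ is plurisuperharmonic for holomorphic sections $s$ of $\sH$ (or of the quotient $\sH/\fI_{\fh}(Z)$, whose norm is $\Phi$): the analytic characterization in Proposition~\ref{griffiths-test-holo} runs in the other direction --- Griffiths \emph{negativity} is what gives plurisubharmonicity of $\log\fh(s,s)$, and dualizing gives only that $\log\|\xi\|^2_*$ is plurisubharmonic for holomorphic sections $\xi$ of $\sH^*$. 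In rank $\ge 2$, the Cauchy--Schwarz excess $\fh(\nabla s,\nabla s)/\fh(s,s)-|\fh(\nabla s,s)|^2/\fh(s,s)^2$ that appears in $\ii\di\dbar\log\fh(s,s)$ does not vanish, so positivity of the curvature does not force plurisuperharmonicity. The paper therefore never claims concavity of $\Phi$: it fixes a single compactly supported dual datum $\sigma$, shows $\lambda_\sigma(t)=\log\|\xi_\sigma\|^2_{t*}$ is convex and (being bounded at $-\infty$ by Lemma~\ref{xi_g-is-bounded}) non-decreasing (Lemma~\ref{convex-inc}), proves a lower bound on $\lim_{t\to-\infty}\|\xi_\sigma\|^2_{t*}$ (Theorem~\ref{lower-bound-for-xis}), and only then runs the duality formula \eqref{dual-norm} plus Cauchy--Schwarz with respect to the Bergman projection $P\sigma$. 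This per-functional convexity argument is what actually yields $\Phi(0)^2\le\Phi(t)^2$; nothing in the paper establishes, or needs, concavity of $\Phi$ itself.

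Two secondary points. First, your smooth weight $e^{-\chi(\psi-\Re\tau)}$ is a reasonable substitute for the paper's $e^{-p\max(\psi-\Re\tau,0)}$ (which is only Lipschitz), but the requirement ``$\chi$ convex, nondecreasing, $\chi\to+\infty$'' is too weak to land the sharp constant: the transition region $\{\psi>\Re\tau\}$ contributes $C\int_0^\infty e^{u-\chi(u)}\,du$ to $e^{-t}\|G\|_t^2$, which diverges unless $\chi$ grows strictly super-linearly. The paper handles this with the two-parameter family $e^{-p\psi_\tau}$, $p>1$, and Lemma~\ref{deal-with-p} (a quantitative bound $\frac{2}{p-1}$ along a subsequence of $t$'s), taking $p\to\infty$ only at the end; some device of that sort is unavoidable. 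Second, when you evaluate the degenerate end you need the approximating extension $G_0$ to be holomorphic up to $\di X$ so that the co-area limit and the error estimate hold; the paper gets this from Stein/Runge approximation (Lemma~\ref{bergman-runge}), which should appear explicitly in a complete argument rather than being folded into ``a co-area computation.''
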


\noi (Recall that integration on an analytic variety means integration over the regular locus.)

By using results from real analysis (namely, the Banach-Alaoglu Theorem and the Lebesgue limit theorems), it is standard that the proof reduces to the following situation.

\begin{enumerate}[i.]
\item  The manifold $X$ is a relatively compact, pseudoconvex domain in a Stein manifold $\tilde X$, and there is a smooth complex hypersurface $\tilde Z \subset \tilde X$ such that $\tilde Z \cap X = Z$.
\item  There is a holomorphic vector bundle $\tilde E \to \tilde X$ and a smooth Hermitian metric $\tilde \fh$ for $\tilde E$ such that $\tilde E |_X = E$ and $\tilde \fh |_X = \fh$.
\item  There is a holomorphic section $\tilde f \in H^0(\tilde Z, \cO (\tilde E))$ such that $\tilde f|_Z = f$.
\end{enumerate}
Perhaps the least standard part is the smoothness of $\tilde Z$; this situation can be achieved because the singular locus of a hypersurface $Z_o$ in a Stein manifold is contained in some hypersurface $W$ such that no component of $Z_o$ lies entirely in $W$.  If we remove $W$ from our original Stein manifold then the resulting manifold is still Stein.  And since at the end of the day we are integrating, the removal of a null set has no effect.  

By Stein theory the section $\tilde f$ extends to a holomorphic section $\tilde F \in H^0(\tilde X, \cO (\tilde E))$, albeit with no estimates.  Nevertheless, its restriction $F:= \tilde F|_X$ to $X$ has finite $L^2$ norm because of the smoothness of $\fh$ and the boundedness of $X$.  It follows that there exists an extension of minimal $L^2$ norm, and our task is to estimate this minimal norm.  From here on fix $f \in H^0(\overline{Z}, \cO (E))$ such that 
\[
\int _Z \fh (f,f) \frac{dA_g}{|dT|^2_g} < +\infty,
\]
and denote by $F_o$ the extension of $f$ whose squared norm 
\begin{equation}\label{x-norm}
\int _X\fh (F_o,F_o) dV_g
\end{equation}
is minimal.  We denote by $\sH (\fh, g)$ the Hilbert space of all $F \in H^0(X, \cO (E))$ whose squared norm \eqref{x-norm} is finite.

One characterizes the norm of the minimal extension via duality as follows.  Let 
\[
\fI _{\fh}(Z) = \left \{ G \in \sH (\fh, g)\ ;\ G|_Z \equiv 0\right \}
\]
and 
\[
{\rm Ann}(\fI _{\fh}(Z)) := \left \{ \xi \in \sH (\fh, g)^* \ ;\ \left < \xi, G \right > = 0 \text{ for all }G \in \fI_{\fh}(Z) \right \}.
\]
Then 
\begin{equation}\label{dual-norm}
\int _X\fh (F_o,F_o) dV_g = \sup \left \{ \frac{\left | \left < \xi, F\right > \right |^2}{||\xi||^2_*} \ ;\ \xi \in {\rm Ann}(\fI_{\fh}(Z))\right \},
\end{equation}
where $F \in \sH (\fh, g)$ is any extension of $f$.  (The right hand side of \eqref{dual-norm} does not depend on $F$.)

The approach to proving Theorem \ref{ot-vb-flat} is to estimate the right hand side of \eqref{dual-norm}.  To obtain such an estimate, one begins by observing that, with 
\[
\xi _{\sigma} : \sH (\fh , g) \ni F \mapsto \int _Z \fh (F, \sigma) \frac{dA_g}{|dT|^2_g},
\]
the set 
\[
\left \{ \xi _{\sigma}\ ;\ \sigma \in H^0(Z, \sC^{\infty}(E)) \cap \sL ^2_Z (\fh, g) \text{ compactly supported}\right \}
\]
is dense in ${\rm Ann}(\fI _{\fh}(Z))$, where 
\[
\sL ^2_Z (\fh, g) := \left \{ \sigma \in \Gamma (Z, E) \text{ measurable} \ ;\ \int _Z \fh (\sigma, \sigma ) \frac{dA_g}{|dT|^2_g} < +\infty \right \}.
\]
The subspace 
\[
\sI (\fh , g) := \sL ^2 _Z (\fh , g) \cap H^0(Z, \cO(E))
\]
of holomorphic sections in $\sL ^2_Z(\fh, g)$ is closed, and thus we have an orthogonal projection 
\[
P : \sL ^2_Z (\fh, g) \to \sI (\fh , g).
\]
In terms of this orthogonal projection, we find that for any extension $F$ of $f$, 
\begin{eqnarray*}
\left | \left < \xi _{\sigma} , F \right > \right |^2 &=&  \int _Z \fh (f, {\sigma}) \frac{dA_g}{|dT|^2_g} \\
&=&  \int _Z \fh (f,P {\sigma}) \frac{dA_g}{|dT|^2_g} \le \left ( \int _Z \fh (f,f) \frac{dA_g}{|dT|^2_g} \right ) \left ( \int _Z \fh (P{\sigma}, P{\sigma}) \frac{dA_g}{|dT|^2_g} \right ).
\end{eqnarray*}
Thus to prove Theorem \ref{ot-vb-flat} it suffices to show that 
\begin{equation}\label{key-estimate-ot-vb-flat}
\int _Z \fh (P{\sigma}, P{\sigma}) \frac{dA_g}{|dT|^2_g} \le \pi ||\xi _{\sigma}||^2_*
\end{equation}
The strategy for proving \eqref{key-estimate-ot-vb-flat} is essentially to degenerate $X$ to $Z$, in a sense that we now explain.

Let 
\[
\bbL := \{ \tau \in \bbC\ ;\ \Re \tau < 0 \}.
\]
denote the left half plane.  For each $\tau \in \bbL$ define 
\[
\sX := \left \{ (x, \tau) \in X \times \bbL \ ;\ \log |T(x)|^2 < \Re \tau \right \} \quad \text{and} \quad X_{\tau} := \{ x \in X\ ;\ (x,\tau) \in \sX \}.
\]
Ideally one would like to work with the Hilbert field $\mathbf{H}(\fh, g) \to \bbL$ whose fiber over $\tau \in \bbL$ is 
\[
\mathbf{H}_{\tau}(\fh , g) := \left \{ F \in H^0(X_{\tau}, \cO (E))\ ;\ e^{-\Re \tau} \int _{X_{\tau}} \fh (F,F) dV_g < +\infty \right \}.
\]
The degeneration is as $t := \re \tau \to -\infty$.  The following lemma, which is an easy consequence of Fubini's Theorem, will be of use later.  

\begin{lem}\label{fubini}
Let $\cF$ be a smooth function on $\overline{X}$.  Then 
\[
\limsup _{t \to -\infty} e^{-t} \int _{X_t} \cF dV_g= \pi \int _Z \cF\frac{dA_g}{|dT|^2_g}.
\]
\end{lem}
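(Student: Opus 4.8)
The plan is to straighten out $T$ to a coordinate and apply Fubini's theorem in the normal direction. Since $\cF$ is smooth on the compact set $\overline X$ and, in the reduced situation in which the lemma is applied, $Z$ is smooth with $dT$ nowhere vanishing along $Z$, I would first cover a neighborhood $U$ of the closure of $Z$ in $\overline X$ by finitely many holomorphic coordinate charts $V_1,\dots,V_N$ in which $T$ is the first coordinate $z^1$ (so that $Z\cap V_\alpha=\{z^1=0\}$), together with a smooth partition of unity $\chi_0,\chi_1,\dots,\chi_N$ on $\overline X$ satisfying ${\rm supp}\,\chi_\alpha\subset V_\alpha$ for $\alpha\ge 1$ and ${\rm supp}\,\chi_0$ disjoint from $\overline Z$. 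Because $|T|^2$ has a positive minimum on the compact set $\overline X\setminus U$, for all sufficiently negative $t$ the tube $X_t=\{x\in X\ ;\ |T(x)|^2<e^t\}$ lies inside $U$ and misses ${\rm supp}\,\chi_0$, so $e^{-t}\int_{X_t}\cF\,dV_g=\sum_{\alpha=1}^N e^{-t}\int_{X_t}\chi_\alpha\cF\,dV_g$ and it suffices to treat one chart at a time.

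In a fixed chart write $dV_g=\Phi\,dV_{\rm euc}$, where $\Phi=\det(g_{i\bar j})$ is a positive smooth density and $dV_{\rm euc}$ is Lebesgue measure in the coordinates $z=(z^1,z')$, $z'=(z^2,\dots,z^n)$. Then $X_t$ meets this chart in $\{|z^1|<e^{t/2}\}$ over a piece of $Z=\{z^1=0\}$, and by Fubini $e^{-t}\int_{X_t}\chi_\alpha\cF\,dV_g$ is an integral in $z'$ whose inner integral over the disc $\{|z^1|<e^{t/2}\}$, after passing to polar coordinates $z^1=\rho e^{\ii\theta}$ and rescaling $\rho=e^{t/2}s$, equals $\int_0^1\!\!\int_0^{2\pi}(\chi_\alpha\cF\Phi)(e^{t/2}se^{\ii\theta},z')\,s\,d\theta\,ds$. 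Continuity of $\chi_\alpha\cF\Phi$ makes this converge, as $t\to-\infty$, to $\big(\int_0^1\!\int_0^{2\pi}s\,d\theta\,ds\big)(\chi_\alpha\cF\Phi)(0,z')=\pi(\chi_\alpha\cF\Phi)(0,z')$, and dominated convergence in $z'$ then gives
\[
\lim_{t\to-\infty}e^{-t}\int_{X_t}\chi_\alpha\cF\,dV_g=\pi\int_Z(\chi_\alpha\cF\Phi)\big|_{z^1=0}\,dV_{\rm euc}(z').
\]

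It remains to identify the limiting density: one must check that along $Z$ one has $\Phi\,dV_{\rm euc}(z')=dA_g/|dT|^2_g$, after which summation over $\alpha$ completes the proof. In the same coordinates $dA_g=\det\!\big((g|_Z)_{i\bar j}\big)\,dV_{\rm euc}(z')$ and $|dT|^2_g=|dz^1|^2_g=g^{1\bar1}$, and this identification is exactly the Cramer/cofactor identity $g^{1\bar1}\det(g_{i\bar j})=\det\!\big((g|_Z)_{i\bar j}\big)$ valid along $\{z^1=0\}$ (the $(1,\bar1)$ principal minor of the Hermitian matrix $(g_{i\bar j})$ is the determinant of the metric it induces on $\{z^1=0\}$). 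Alternatively, since both sides of the assertion are coordinate-free, it is enough to verify the constant $\pi$ on the flat model $T=z^1$, $g$ Euclidean, with $\cF$ a bump supported over a compact $K'\subset\{z^1=0\}$, where
\[
e^{-t}\,\vol\big(\{|z^1|<e^{t/2}\}\times K'\big)=\pi\,\vol(K')=\pi\int_{\{0\}\times K'}\frac{dA_g}{|dT|^2_g}.
\]

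The partition of unity, the polar-coordinate limit and the dominated convergence are all routine; the two points that need care are pinning down the overall constant $\pi$ (cleanest via the flat model just described) and the behavior near $\partial X$. For the latter one needs the coordinate charts to extend past the boundary, which is legitimate since $T$, $g$, $\cF$ and $\tilde Z$ all live on $\tilde X\supset\supset X$, and one needs $\int_Z\cF\,dA_g/|dT|^2_g$ to be a finite integral of a bounded function, which holds because the closure of $Z$ is compact and $dT\neq 0$ along it; I expect this boundary bookkeeping to be the main nuisance. Incidentally the argument produces a genuine limit, not merely a $\limsup$; the $\limsup$ suffices for the application and would be the right formulation were $Z$ allowed to be singular.
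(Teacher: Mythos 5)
The paper does not actually prove this lemma---it is explicitly ``left to the reader''---so there is no official argument to compare yours against. Your proof is correct and complete: you cover $\overline{Z}$ by finitely many charts in which $T$ is the first coordinate, Fubini out the normal direction, rescale $z^1 = e^{t/2}se^{i\theta}$ so that the inner integral becomes $\int_0^1\!\int_0^{2\pi}(\chi_\alpha\cF\Phi)(e^{t/2}se^{i\theta},z')\,s\,d\theta\,ds \to \pi(\chi_\alpha\cF\Phi)(0,z')$, apply dominated convergence in $z'$, and identify the resulting density $\Phi\,dV_{\rm euc}(z')$ with $dA_g/|dT|^2_g$ via the cofactor identity $g^{1\bar 1}\det(g_{i\bar j})=\det\bigl((g|_Z)_{i\bar j}\bigr)$ along $\{z^1=0\}$. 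The boundary bookkeeping you flag is handled exactly as you indicate: the inner integral is uniformly bounded by $\pi\sup_{\overline{X}}|\chi_\alpha\cF\Phi|$, for a.e.\ $z'$ the truncated slice eventually contains the full disc of radius $e^{t/2}$, and the exceptional set $\{z': (0,z')\in\partial X\}$ is a null set on $Z$, so DCT applies; your reduction to the flat model to pin down the constant $\pi$ is a clean way to avoid recomputing the Jacobian factor. Your closing remark that the argument actually yields a $\lim$, with the $\limsup$ formulation there to accommodate singular $Z$, is also right. A somewhat more coordinate-free packaging of the same tube-volume count, in the spirit of the co-area computation of Ohsawa's measure $dV_M[\Psi]$ given earlier in Section~\ref{OTM-proofs-section}, would be to disintegrate $dV_g$ along level sets of $|T|^2_{\fg}$ and let the radius tend to zero; but your local-straightening version is equally valid and arguably more elementary.
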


\noi The proof is left to the reader.

We are interested in obtaining a certain convexity from this setup by making use of Theorem \ref{dit-triv}.  Unfortunately, Theorem \ref{dit-triv} does not apply to $\mathbf{H}(\fh, g) \to \bbL$, since $\sX \to \bbL$ is not a trivial family.  To remedy this problem we consider instead the trivial family $p_1 ^* E \to X \times \bbL \to \bbL$ with a non-trivial metric.  More precisely, define the plurisubharmonic functions 
\[
\psi : X \times \bbL \ni (x, \tau) \mapsto \max ( \log |T(x)|^2 - \Re \tau, 0) \quad \text{and} \quad \psi _{\tau}(x) := \psi (x,\tau).
\]
The functions $\psi$ is non-negative and depends only on $\Re \tau$, and the support of $\psi _{\tau}$ is $X- X_{\Re \tau}$.  We define the metrics 
\[
\fh ^{(p)} _{\tau} := e^{-p \psi _{\tau}} \fh
\]
and the Hilbert spaces 
\[
\sH ^{(p)} _{\tau} :=  \left \{ F \in H^0(X, \cO (E))\ ;\ ||F||^2_{\tau} := e^{-\Re \tau} \int _{X} \fh ^{(p)} _{\tau}(F,F) dV_g < +\infty \right \}.
\] 
Note that for each $F \in \sH^{(p)} _{\tau}$ one has 
\[
\lim _{p \to \infty} e^{-\Re \tau} \int _{X} \fh^{(p)} _{\tau} (F,F) dV_g = e^{-\Re \tau} \int _{X_{\Re \tau}} \!\!\!\!\!\!\!\! \fh (F,F) dV_g.
\]
Thus for large $p$ the Hilbert bundle $\sH ^{(p)} \to \bbL$ approximates $\mathbf{H} (\fh , g) \to \bbL$ in a certain sense, and Theorem \ref{dit-triv} can be applied to $\sH ^{(p)}$.

To carry out the approximation of $\mathbf{H}(\fh, g)$ by $\sH^{(p)}$ we shall use the following lemma, which is a special case of \cite[Lemma 3.4]{bl}.  

\begin{lem}\label{deal-with-p}
Let $\nu : (-\infty, 0) \to \bbR_+$ be an increasing function such that $\nu (t) \le e^t$ for all $t < 0$.  Then for $p > 1$, 
\begin{equation}\label{lim-inf}
\liminf _{t \to -\infty} e^{-t} \int _t ^0 e^{-p(s-t)} d\nu (s) \le \frac{2}{p-1}.
\end{equation}
\end{lem}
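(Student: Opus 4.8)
The plan is to rewrite the quantity under the $\liminf$ in a more convenient form. Since $e^{-t}e^{-p(s-t)}=e^{(p-1)t}e^{-ps}$, set
\[
\phi(t):=e^{-t}\int_t^0 e^{-p(s-t)}\,d\nu(s)=e^{(p-1)t}\int_t^0 e^{-ps}\,d\nu(s),
\]
which is finite for every $t<0$ because $e^{-ps}\le e^{-pt}$ on $[t,0]$ and $0\le\nu\le 1$. Writing $\liminf_{t\to-\infty}\phi(t)=\sup_{t_0}\inf_{t\le t_0}\phi(t)$, it suffices to prove the following: \emph{if $c>0$ and $t_0<0$ satisfy $\phi(t)\ge c$ for all $t\le t_0$, then $(p-1)c\le 1$.} Indeed, were $\liminf_{t\to-\infty}\phi(t)>\frac{1}{p-1}$, there would be a $t_0$ and a $c>\frac{1}{p-1}$ with $\phi(t)\ge c$ for all $t\le t_0$, contradicting this implication; hence $\liminf_{t\to-\infty}\phi(t)\le\frac{1}{p-1}\le\frac{2}{p-1}$.

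To prove the implication, I would assume $\phi(t)\ge c$ for all $t\le t_0$, i.e. $\int_t^0 e^{-ps}\,d\nu(s)\ge c\,e^{(1-p)t}$ for $t\le t_0$, fix $T>-t_0$, multiply by $(p-1)e^{(p-1)t}>0$, and integrate in $t$ over $[-T,t_0]$. The left-hand side equals exactly $(p-1)c\,(T+t_0)$, since the integrand is the constant $(p-1)c$. For the right-hand side I would apply Tonelli's theorem to interchange the $ds$- and $dt$-integrals over $\{(s,t): -T\le t\le t_0,\ t\le s\le 0\}$ (the integrand is nonnegative as $d\nu\ge0$), carry out the inner $dt$-integral, and discard the manifestly nonpositive pieces $-e^{-(p-1)T}e^{-ps}\,d\nu(s)$ and $-e^{-(p-1)T}\int_{t_0}^0 e^{-ps}\,d\nu(s)$; what remains is
\[
(p-1)c\,(T+t_0)\ \le\ \int_{-T}^{t_0}e^{-s}\,d\nu(s)\ +\ \phi(t_0).
\]
Then I would integrate by parts in the Riemann--Stieltjes sense and invoke $\nu(s)\le e^{s}$ together with $\nu\ge 0$:
\[
\int_{-T}^{t_0}e^{-s}\,d\nu(s)=e^{-t_0}\nu(t_0)-e^{T}\nu(-T)+\int_{-T}^{t_0}\nu(s)e^{-s}\,ds\ \le\ 1+(T+t_0).
\]
Combining the two displays gives $(p-1)c\,(T+t_0)\le 1+\phi(t_0)+(T+t_0)$; dividing by $T+t_0>0$ and letting $T\to+\infty$ yields $(p-1)c\le 1$, as desired.

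The routine computations are the exact evaluation of the inner $dt$-integral and the Riemann--Stieltjes integration by parts. I expect the only genuinely delicate point to be the bookkeeping of the Stieltjes integrals when $\nu$ is merely increasing (and defined only on $(-\infty,0)$): one fixes the left-continuous representative, uses $\nu(0^-)\le 1$ and $\nu(-T)\ge 0$ for the boundary terms, and notes that atoms of $d\nu$ occupy a set of $t$-measure zero, so Tonelli applies without fuss. None of this affects the estimates. I would also remark that the argument in fact yields the sharper constant $\frac{1}{p-1}$, attained in the limit $t\to-\infty$ by $\nu(s)=e^{s}$; the factor $2$ in the statement is retained only to match \cite[Lemma 3.4]{bl}.
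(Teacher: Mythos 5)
Your proof is correct. The paper itself does not prove this lemma --- it simply cites \cite[Lemma 3.4]{bl} --- so there is no in-text argument to compare against. Your argument is self-contained and the steps check out: the Tonelli interchange is justified by nonnegativity of the integrand; after evaluating the inner $dt$-integral the two pieces multiplied by $-e^{-(p-1)T}$ are discarded with the correct sign; Stieltjes integration by parts together with $0\le\nu(s)\le e^s$ gives $\int_{-T}^{t_0}e^{-s}\,d\nu(s)\le 1+(T+t_0)$; and dividing by $T+t_0>0$ and letting $T\to+\infty$ yields $(p-1)c\le 1$. The reduction to this implication via $\liminf_{t\to-\infty}\phi=\sup_{t_0<0}\inf_{t\le t_0}\phi$ is handled correctly as well.

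Your closing remark about the sharper constant $\tfrac1{p-1}$ is also right, and it is saturated by $\nu(s)=e^s$: a direct computation gives $\phi(t)=\tfrac{1-e^{(p-1)t}}{p-1}\to\tfrac1{p-1}$. It is worth noting that the second integration in $t$ is not cosmetic. A single one-pass integration by parts --- bounding $\int_t^0 e^{-ps}\,d\nu(s)\le 1+p\int_t^0 e^{(1-p)s}\,ds$ and multiplying by $e^{(p-1)t}$ --- yields only $\limsup_{t\to-\infty}\phi(t)\le\tfrac{p}{p-1}$, which for $p>2$ exceeds the stated bound $\tfrac{2}{p-1}$. Assuming a uniform lower bound $c$ on a full left ray and integrating once more in $t$ is what genuinely exploits the $\liminf$ and produces the optimal constant.
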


\medskip

We now make our way toward the proof of Theorem \ref{ot-vb-flat}.  

\begin{lem}\label{xi_g-is-bounded}
Let $\sigma \in H^0(Z, \sC^{\infty}(E))$ have compact support.  Then 
\[
\sup _{\tau \in \bbL} ||\xi_{\sigma}||^2_{\tau *} < +\infty.
\]
\end{lem}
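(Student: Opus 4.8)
The plan is to estimate the functional $\xi_\sigma$ by hand using Cauchy--Schwarz, reducing the claim to a trace inequality whose constant does not deteriorate as $\re \tau \to -\infty$ (and is moreover independent of $p$, which is why $p$ does not appear in the statement). Write $t := \re \tau$, so that $X_t = \{|T|^2 < e^{t}\}$, $\psi_\tau$ vanishes identically on $X_t$, and $0 \le e^{-p\psi_\tau}\le 1$. For $F \in \sH^{(p)}_\tau$, Cauchy--Schwarz on $Z$ gives
\[
|\xi_\sigma(F)|^2 \le \left ( \int _Z \fh(F,F)\frac{dA_g}{|dT|^2_g}\right ) \left (\int _Z \fh(\sigma,\sigma)\frac{dA_g}{|dT|^2_g}\right ),
\]
the second factor being a finite constant depending only on the fixed $\sigma$; and since $\psi_\tau \equiv 0$ on $X_t$ one has $e^{-t}\int_{X_t}\fh(F,F)\,dV_g \le \|F\|^2_\tau$. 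So it suffices to find a constant $C$, \emph{independent of $t < 0$}, such that
\[
\int _Z \fh(F,F)\frac{dA_g}{|dT|^2_g} \le C\, e^{-t}\int _{X_t}\fh(F,F)\,dV_g
\]
for every $F$ holomorphic on $X_t$; this at once yields $\|\xi_\sigma\|^2_{\tau *} \le C \int_Z \fh(\sigma,\sigma)\frac{dA_g}{|dT|^2_g}$, uniformly in $\tau \in \bbL$.

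To prove the trace inequality I would cover the compact set $\mathrm{supp}\,\sigma \subset Z$ by finitely many charts $U_\alpha \subset X$ carrying holomorphic coordinates $(z',z^n)$ with $z^n = T$ (so $Z\cap U_\alpha = \{z^n = 0\}$) and over which $E$ admits a holomorphic frame. On $\overline{U_\alpha}$, which is compact, the smoothness and positivity of $\fh$ up to $\di X$ imply that $\fh(F,F)$ lies between constant multiples of the plurisubharmonic function $u_\alpha := \sum_i |F^i_\alpha|^2$ built from the components $F^i_\alpha$ of $F$ in the chosen frame. For $t$ sufficiently negative and each $z' \in Z\cap U_\alpha$ the disc $\{z'\}\times\{|z^n| < e^{t/2}\}$ lies in $X_t$, so the sub-mean-value inequality in $z^n$ bounds $u_\alpha(z',0)$ by $(\pi e^{t})^{-1}\int_{|z^n|<e^{t/2}}u_\alpha(z',z^n)\,dA(z^n)$. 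Integrating over $z' \in Z\cap U_\alpha$ against $dA_g/|dT|^2_g$, then applying Fubini and the two-sided comparisons of $\fh(F,F)$ with $u_\alpha$, of $|dT|^2_g$ with $|dz^n|^2$, and of $dV_g$ with the product area element, gives
\[
\int _{Z\cap U_\alpha}\fh(F,F)\frac{dA_g}{|dT|^2_g} \le \frac{C_\alpha}{e^{t}}\int _{X_t}\fh(F,F)\,dV_g
\]
with $C_\alpha$ independent of $t$, valid for all $t \le -M$ and a uniform $M$. For $-M \le t < 0$ the same follows from the case $t = -M$ since $X_t \supseteq X_{-M}$ and $e^{-t}\ge 1$. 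Summing over $\alpha$ gives the trace inequality, hence the lemma.

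The step I expect to require the most care is the \emph{uniformity} of $C$ as $t \to -\infty$: the tube $X_t$ around $Z$ collapses, but its volume shrinks like $e^{t}$, which is exactly compensated by the weight $e^{-t}$ built into $\|\cdot\|_\tau$. This is the same bookkeeping that underlies Lemma \ref{fubini} and the co-area formula, but it must now hold uniformly over the infinite-dimensional space of holomorphic sections, and it is the sub-mean-value property that supplies this. The one remaining nuisance, that $\fh(F,F)$ itself need not be plurisubharmonic, is circumvented by passing to the local trivializations, where $\fh(F,F)$ is trapped between constant multiples of the honestly plurisubharmonic function $\sum_i |F^i_\alpha|^2$.
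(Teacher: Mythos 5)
Your proof is correct and takes essentially the same route as the paper's: both control a holomorphic section along $Z$ by the sub-mean-value inequality on small discs transverse to $Z$ (of radius $\sim e^{t/2}$), with the shrinking disc area compensated exactly by the weight $e^{-t}$ in $\|\cdot\|_\tau$; the paper packages this as a pointwise bound $\fh(H(x),H(x)) \le C_\sigma \|H\|_\tau^2$ for $x \in \mathrm{supp}\,\sigma$, whereas you package it as an $L^2$ trace inequality, but these are equivalent for the present purpose (and your explicit passage to local frames to get a genuinely plurisubharmonic comparison function $\sum_i |F^i_\alpha|^2$ is a point the paper leaves implicit). One small slip: your intermediate trace inequality should be stated over $\mathrm{supp}\,\sigma$ rather than over all of $Z$ — the integral $\int_Z \fh(F,F)\,dA_g/|dT|^2_g$ may diverge since $Z$ can meet $\di X$ — but since $\sigma$ is compactly supported the Cauchy--Schwarz step localizes there anyway, and your covering argument only proves (and only needs) the inequality over $\mathrm{supp}\,\sigma$, so no real gap results.
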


\begin{proof}
Let $H \in \sH ^{(p)}_{\tau}$.  For each $x \in Z$ we can choose a small holomorphic disk $D_r(x) \subset X$ that is perpendicular (with respect to the metric $g$) to $Z$, lies in a neighborhood $U \subset X$ of $x$ such that $T$ is a component of a coordinate system in $U$, and one has $T (D_r(x)) = D_r(0)$.  If $\tau$ is far away from $-\infty$ we can choose $r$ independently of $\tau$, and if $\re \tau$ is so negative that $U$ is no longer contained in $X_{\re \tau}$ we choose $r = e^{\tau/2}$.  By the sub-mean value property we have 
\[
\fh (H(x), H(x)) \le C e^{- \re \tau}\int _{D_r(x)} \fh (H,H) \ii dT \wedge d\bar T.
\]
The factor $e^{-\re \tau}$ comes about because the area of this disk $D_r(x)$ with respect to the measure $e^{-\re \tau}\ii dT \wedge d\bar T$ is bounded above and away from $0$ uniformly in $\tau$.  Indeed, for $\tau$ sufficiently far from $-\infty$ this is clear, and for $\re \tau << 0$ the choice $r = e^{\re \tau/2}$ dictates that the area of $D_r(x)$ with respect to the measure $e^{-\re \tau} \ii dT \wedge d\bar T$ is $2\pi$.  The constant $C$, which of course depends on the metrics $\fh$ and $g$, depends only locally uniformly on $x$, so in particular if $x$ lies a relatively compact subset of $Z$ containing the support of $\sigma$ then we can choose $C$ uniform.  If we now integrate over a ball $\Delta(x)$ centered at $x$ and that is relatively compact in $Z$ then by Fubini we have 
\begin{eqnarray*}
\fh (H(x), H(x)) &\le& C'e^{-\re \tau} \int _{\Delta (x) \times D_r(x)} \fh (H,H) \omega_g ^{n-1} \wedge dT \wedge d\bar T \\
&=&  C' e^{-\re \tau} \int _{\Delta(x) \times D_r(x)} \fh (H,H) |dT|^2_gdV_g.
\end{eqnarray*}
Therefore for some constant $C_{\sigma}$ depending on $\fh$, $g$ and $\sigma$ and for each $x \in {\rm Support}({\sigma})\subset Z$ 
\[
\fh (H(x), H(x))  \le C_{\sigma} e^{- \Re \tau} \int _{X_{\Re \tau}} \fh (H,H)dV_g \le C_{\sigma} e^{- \Re \tau} \int _{X} \fh ^{(p)} _{\tau}(H,H)dV_g =  C_{\sigma} ||H||^2_{\tau}.
\]
Thus 
\[
||\xi_{\sigma}||^2_{\tau *} = \sup _{||H||_{\tau} = 1} \left | \int _Z \fh (H , {\sigma})  \frac{dA_g}{|dT|^2_{g}}\right |^2 \le C_{\sigma},
\]
as desired.
\end{proof}

\begin{lem}\label{convex-inc}
Let $\sigma \in H^0(Z, \sC^{\infty}(E))$ have compact support.  Then the function 
\[
\lambda_{\sigma}  : (-\infty, 0] \ni t \mapsto \log ||\xi _{\sigma}||^2_{t*}
\]
is non-decreasing.  In particular, 
\[
||\xi_{\sigma}||^2_{o*} \ge  ||\xi_{\sigma}||^2_{t^*}
\]
for all $t < 0$.
\end{lem}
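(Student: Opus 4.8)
The plan is to prove the stronger statement that $\lambda_\sigma$ is \emph{convex}, and then to deduce monotonicity from the a priori bound of Lemma~\ref{xi_g-is-bounded}. First note that the weight $\psi_\tau$, the metric $\fh^{(p)}_\tau$, and the functional $\xi_\sigma$ all depend on $\tau$ only through $\Re\tau$ — each is invariant under the imaginary translations $\tau\mapsto\tau+\ii s$ — so $\|\xi_\sigma\|^2_{\tau*}$ is a function of $t=\Re\tau$ alone, finite by Lemma~\ref{xi_g-is-bounded}, and $\lambda_\sigma(t)=\log\|\xi_\sigma\|^2_{\tau*}$ for any $\tau$ with $\Re\tau=t$. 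The heart of the matter is the claim that $\tau\mapsto\log\|\xi_\sigma\|^2_{\tau*}$ is plurisubharmonic on $\bbL$ (indeed on a slightly larger half-plane $\{\Re\tau<\varepsilon_0\}$, since $\psi_\tau$ remains plurisubharmonic and $\xi_\sigma$ remains a bounded functional for $\Re\tau$ near $0$). Granting this, a plurisubharmonic function depending only on $\Re\tau$ is a convex function of $t$, here on $(-\infty,\varepsilon_0)$; by Lemma~\ref{xi_g-is-bounded} it is bounded above on $(-\infty,0)$; and a convex function on a left half-line that is bounded above is non-decreasing, since a strictly negative one-sided derivative at any point would, by convexity, force the function to $+\infty$ as $t\to-\infty$. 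This gives both that $\lambda_\sigma$ is non-decreasing on $(-\infty,0]$ and the displayed inequality.

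To prove the plurisubharmonicity claim I would invoke Theorem~\ref{dit-triv}, applied to the trivial family $p_1^*(K_X^*\tensor E)\to X\times\bbL\to\bbL$ equipped with the metric $e^{-p\psi}(\fh\tensor dV_\omega)$ — the extra conformal factor $e^{-\Re\tau}$ appearing in $\|\cdot\|_\tau$ is pluriharmonic and does not affect curvature, so it may be absorbed into the weight or simply ignored — whose associated deformation of Bergman spaces is precisely $\sH^{(p)}\to\bbL$. Hypothesis~(a) of Theorem~\ref{dit-triv} holds because for fixed $\tau$ the metric $e^{-p\psi_\tau}(\fh\tensor dV_\omega)$ differs from $\fh\tensor dV_\omega$ — which is Nakano-nonnegative precisely because $\ii\Theta(\fh)+{\rm Ricci}(\omega)\ge_{\rm Nak}0$ — by $p\,\ii\di\dbar\psi_\tau\tensor{\rm Id}$ with $\psi_\tau$ plurisubharmonic, and adding a nonnegative $(1,1)$-form times the identity preserves Nakano-nonnegativity. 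Hypothesis~(b) holds because on $X\times\bbL$ the curvature of the total metric is the sum of $p\,\ii\di\dbar\psi\tensor{\rm Id}$, nonnegative since $\psi$ is plurisubharmonic on the product, and the $p_1$-pullback of a Nakano-nonnegative form, which is again Nakano-nonnegative. Hence $\sH^{(p)}\to\bbL$ is Nakano-nonnegative, in particular Griffiths-nonnegative, so its dual $\sH^{(p)*}\to\bbL$ is Griffiths-seminegative. Finally $\xi_\sigma$, defined by a $\tau$-independent $\bbC$-linear formula, is a holomorphic section of $\sH^{(p)*}$ — pairing it with any holomorphic section $F$ of $\sH^{(p)}$ produces the holomorphic function $\tau\mapsto\int_Z\fh(F,\sigma)\,dA_g/|dT|^2_g$ — so Proposition~\ref{griffiths-test-holo}, whose proof carries over verbatim to Hermitian holomorphic Hilbert bundles, shows that the logarithm of its squared norm, namely $\log\|\xi_\sigma\|^2_{\tau*}$, is plurisubharmonic.

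The substance of the lemma is thus packaged in Theorem~\ref{dit-triv}; the one genuine technical point is that $\psi$, being a $\max$, is not smooth, so Theorem~\ref{dit-triv} does not apply to $e^{-p\psi}(\fh\tensor dV_\omega)$ on the nose. I would circumvent this in the standard way: regularize the $\max$ to a smooth convex regularized maximum, which keeps the weight plurisubharmonic, run the argument with the regularized weight, and let the regularization parameter tend to $0$; since the exponents converge uniformly the metrics are uniformly comparable, the dual norms converge, and a uniform limit of plurisubharmonic functions is plurisubharmonic. This is eased by the observation that $\psi\equiv 0$ in a neighborhood of $Z$ (where $\log|T|^2\equiv-\infty$), so that the only non-smoothness of $\psi$ occurs along the real hypersurface $\{\log|T|^2=\Re\tau\}$, which is disjoint from $Z$ and therefore harmless. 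Beyond this bookkeeping I do not anticipate any real obstacle; the only other thing to state with some care is the elementary assertion that a bounded-above convex function on a left half-line is non-decreasing.
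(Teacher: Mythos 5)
Your proposal is correct and takes essentially the same route as the paper: Theorem~\ref{dit-triv} gives subharmonicity of $\tau\mapsto\log\|\xi_\sigma\|^2_{\tau*}$, rotation-invariance in $\Im\tau$ gives convexity in $t=\Re\tau$, and Lemma~\ref{xi_g-is-bounded} together with the elementary fact that a bounded-above convex function on a left half-line is non-decreasing finishes it. You are somewhat more explicit than the paper on two worthwhile technical points that it leaves implicit --- deducing plurisubharmonicity of $\log\|\xi_\sigma\|^2_{\tau*}$ from Griffiths-nonnegativity of $\sH^{(p)}$ via duality and a Hilbert-bundle version of Proposition~\ref{griffiths-test-holo} applied to the constant holomorphic dual section $\xi_\sigma$, and regularizing the non-smooth weight $\psi=\max(\log|T|^2-\Re\tau,0)$ before invoking Theorem~\ref{dit-triv}, which is stated for smooth metrics.
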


\begin{proof}
By Theorem \ref{dit-triv} the function $\tau \mapsto \log ||\xi _{\sigma}||^2_{\tau*}$ is subharmonic on $\bbL$.  On the other hand,  $||\xi _{\sigma}||^2_{\tau*}$ depends only on $\Re \tau$, and thus $\lambda _{\sigma}$ is convex on $(-\infty, 0)$.  If $\lambda_{\sigma}$ decreases anywhere on $(-\infty, 0)$ then by convexity $\lim _{t \to -\infty} \lambda_{\sigma} = +\infty$, which contradicts Lemma \ref{xi_g-is-bounded}.
\end{proof}

\begin{thm}\label{lower-bound-for-xis}
Let $\sigma \in H^0(Z, \sC^{\infty}(E))$ have compact support. Then for each $\delta > 0$ there exists $p$ sufficiently large so that 
\[
\lim _{t \to - \infty}  ||\xi_{\sigma}||^2_{t*} \ge \frac{1}{\pi} \int _Z \fh (P\sigma, P\sigma) \frac{dA_g}{|dT|^2_g} -\delta.
\]
\end{thm}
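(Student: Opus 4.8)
The strategy is to exhibit, for a well-chosen large $p$, a single holomorphic test section $H$ for which $\langle\xi_{\sigma},H\rangle$ is as close as we wish to $A:=\int_Z\fh(P\sigma,P\sigma)\,\frac{dA_g}{|dT|^2_g}$ while $||H||^2_t$ stays, in the limit $t\to-\infty$, essentially at most $\pi A$. Since $||\xi_{\sigma}||^2_{t*}\ge |\langle\xi_{\sigma},H\rangle|^2/||H||^2_t$, letting first $t\to-\infty$ and then $p\to\infty$ will give the claim; we may assume $A>0$, the inequality being otherwise vacuous.

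\emph{Choosing $H$.} Fix $\ve\in(0,\sqrt A)$. As $Z=\tilde Z\cap X$ is a closed complex submanifold of the Stein manifold $X$, Cartan's Theorem~B together with a standard density argument lets us pick a holomorphic section $H$ of $\tilde E$ over $\tilde X$ — hence smooth on $\overline X$ and lying in $\sH(\fh,g)$ — whose restriction $P':=H|_Z$ belongs to $\sI(\fh,g)$ and satisfies $||P'-P\sigma||_{\sL^2_Z(\fh,g)}<\ve$. Since $H|_Z$ is holomorphic it is orthogonal in $\sL^2_Z(\fh,g)$ to $\sigma-P\sigma$, so $\langle\xi_{\sigma},H\rangle=\langle P',\sigma\rangle_{\sL^2_Z(\fh,g)}=\langle P',P\sigma\rangle_{\sL^2_Z(\fh,g)}$, whence $|\langle\xi_{\sigma},H\rangle|\ge A-\ve\sqrt A>0$. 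Because $\fh^{(p)}_t\le\fh$ and $H$ has finite $\sH(\fh,g)$-norm, $H\in\sH^{(p)}_t$ for every $t\le0$.

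\emph{Bounding $||H||^2_t$.} Split $||H||^2_t=e^{-t}\int_{X_t}\fh(H,H)\,dV_g+e^{-t}\int_{X\setminus X_t}e^{-p\psi_t}\fh(H,H)\,dV_g$. For the first term, $\fh(H,H)$ is smooth on $\overline X$, so Lemma~\ref{fubini} gives $\limsup_{t\to-\infty}e^{-t}\int_{X_t}\fh(H,H)\,dV_g=\pi\int_Z\fh(P',P')\,\frac{dA_g}{|dT|^2_g}=\pi||P'||^2\le\pi(\sqrt A+\ve)^2$. For the second term, on $X\setminus X_t$ one has $\psi_t=\log|T|^2-t$, so putting $\mu(s):=\int_{X_s}\fh(H,H)\,dV_g$ it equals $e^{-t}\int_t^0 e^{-p(s-t)}\,d\mu(s)$. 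Now $\mu$ is increasing and, by the preceding limsup and the finiteness of $\int_X\fh(H,H)\,dV_g$, satisfies $\mu(s)\le Ce^s$ for all $s<0$ for a constant $C$ independent of $p$; hence Lemma~\ref{deal-with-p} applied to $\nu=\mu/C$ yields $\liminf_{t\to-\infty}e^{-t}\int_t^0 e^{-p(s-t)}\,d\mu(s)\le\frac{2C}{p-1}$. Adding, $\liminf_{t\to-\infty}||H||^2_t\le\pi(\sqrt A+\ve)^2+\frac{2C}{p-1}$.

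\emph{Conclusion.} By Lemma~\ref{convex-inc} the function $t\mapsto\log||\xi_{\sigma}||^2_{t*}$ is non-decreasing, so $\ell:=\lim_{t\to-\infty}||\xi_{\sigma}||^2_{t*}$ exists; evaluating it along a sequence $t_j\to-\infty$ that realizes $\liminf_{t\to-\infty}||H||^2_t$ gives
\[
\ell=\lim_{j\to\infty}||\xi_{\sigma}||^2_{t_j*}\ \ge\ \lim_{j\to\infty}\frac{|\langle\xi_{\sigma},H\rangle|^2}{||H||^2_{t_j}}\ =\ \frac{|\langle\xi_{\sigma},H\rangle|^2}{\liminf_{t\to-\infty}||H||^2_t}\ \ge\ \frac{(A-\ve\sqrt A)^2}{\pi(\sqrt A+\ve)^2+\frac{2C}{p-1}}.
\]
Given $\delta>0$, first choose $\ve$ so small that $(A-\ve\sqrt A)^2/\big(\pi(\sqrt A+\ve)^2\big)>\frac A\pi-\frac\delta2$; then, keeping this $\ve$ (and hence $H$ and $C$) fixed, choose $p$ large enough that the correction $\frac{2C}{p-1}$ reduces the quotient by at most $\frac\delta2$. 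This yields $\ell\ge\frac1\pi\int_Z\fh(P\sigma,P\sigma)\,\frac{dA_g}{|dT|^2_g}-\delta$, as required. I expect the one genuinely delicate point to be the density statement used in choosing $H$ — that restrictions to $Z$ of sections holomorphic near $\overline X$ are dense in $\sI(\fh,g)$ — which is precisely where the reduction to a relatively compact pseudoconvex $X$ is invoked. A secondary but important subtlety is that Lemma~\ref{deal-with-p} bounds a $\liminf$, not a $\limsup$, of the tail integral, so one cannot bound $\limsup_t||H||^2_t$ directly; this is circumvented by exploiting the \emph{existence} of the monotone limit $\ell$ to evaluate it along the subsequence $(t_j)$ where $||H||^2_t$ attains its $\liminf$.
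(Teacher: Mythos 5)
Your proof is correct and follows essentially the same route as the paper: you invoke Lemma~\ref{bergman-runge} to produce a boundary-holomorphic test section whose restriction approximates $P\sigma$, split $\|H\|^2_t$ into the two pieces controlled by Lemma~\ref{fubini} and Lemma~\ref{deal-with-p}, and finish via the monotonicity from Lemma~\ref{convex-inc}, exactly as in the text. The only (harmless) deviations are cosmetic --- you apply Lemma~\ref{deal-with-p} directly to $\mu(s)=\int_{X_s}\fh(H,H)\,dV_g$ rather than first factoring out $\sup_X\fh(H,H)$, and you spell out the $\liminf$/subsequence bookkeeping a bit more explicitly than the paper does.
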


Theorem \ref{lower-bound-for-xis} is proved using Lemmas \ref{fubini} and \ref{deal-with-p}, as well as the following lemma.  

\begin{lem}\label{bergman-runge}
Let $\sigma \in H^0(Z, \sC^{\infty}(E))$ have compact support.  Then for any $\delta > 0$ there exists $G \in H^0(X,\cO (E))$ that is holomorphic up to the boundary, such that 
\[
%\int _Z  \fh(P\sigma,P\sigma)\frac{dA_{g}}{|dT|^2_{g}}
\int _Z  \fh( G - P\sigma, G-P\sigma)\frac{dA_{g}}{|dT|^2_{g}} \le \delta ^2 .
\]
\end{lem}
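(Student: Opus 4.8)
The plan is to realize $P\sigma$ as an $L^2$-limit, taken over $Z$ in the measure $d\mu := \frac{dA_g}{|dT|^2_g}$, of restrictions to $Z$ of sections holomorphic on a full neighborhood of $\overline X$. Throughout I use the reductions already in force for Theorem \ref{ot-vb-flat}; in addition, after passing to a smooth relatively compact sublevel set I may assume that $X = \{\rho < 0\}$ for a smooth strictly plurisubharmonic function $\rho$ on a relatively compact Stein neighborhood $X' \Subset \tilde X$ of $\overline X$, with $0$ a regular value, with $\tilde Z \subset \tilde X$ a smooth hypersurface meeting $X$ in $Z$, and with $(\tilde E, \tilde \fh)$ restricting to $(E, \fh)$. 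Because by hypothesis $dT$ is not identically zero on any irreducible component of $Z$ and $Z$ is now smooth, $dT$ vanishes nowhere on the compact set $\overline Z := \tilde Z \cap \overline X$; hence $d\mu$ is a finite smooth measure on $\overline Z$, and a section ``holomorphic up to the boundary'' may be taken to be the restriction to $X$ of a section holomorphic on $\{\rho < c\}$ for some $c > 0$.

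Two routine reductions come first. For small $c > 0$ the sublevel set $\{\rho < c\}$ is a relatively compact Stein domain, so by Cartan's Theorem B the restriction map $H^0(\{\rho < c\}, \cO(\tilde E)) \to H^0(\tilde Z \cap \{\rho < c\}, \cO(\tilde E|_{\tilde Z}))$ is surjective; thus it suffices to approximate $P\sigma$, in $L^2(Z, \fh, d\mu)$, by sections holomorphic on a neighborhood of $\overline Z$ in $\tilde Z$. Next, write $P\sigma = \sigma - u_0$, where $u_0$ is the minimal-$L^2(Z, \fh, d\mu)$ solution of $\dbar u = \dbar \sigma$ on $Z$ --- this is legitimate since $\dbar(\sigma - P\sigma) = \dbar \sigma$ and $\sigma - P\sigma$ is orthogonal to $\sI(\fh, g) = \ker \dbar \cap \sL ^2_Z(\fh, g)$. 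The point of $\sigma$ being compactly supported is that $\beta := \dbar \sigma$ is then smooth with support a compact subset of $Z$; in particular $u_0$ is holomorphic on $Z \setminus {\rm Support}(\beta)$, an open subset of $Z$ reaching all the way to $\partial Z$.

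The substance of the lemma is then to show that $P\sigma = \sigma - u_0$ lies in the $L^2(Z, \fh, d\mu)$-closure of the sections holomorphic on a neighborhood of $\overline Z$. Here I would invoke the classical theory of $\dbar$ on the smoothly bounded strictly pseudoconvex domain $Z$: since $\beta$ is smooth and compactly supported, the boundary regularity of the minimal solution --- via global regularity of the $\dbar$-Neumann operator, or via the classical integral solution operators for $\dbar$ on strictly pseudoconvex domains --- yields that $u_0$, hence $P\sigma$, is a section of $\tilde E|_{\tilde Z}$ holomorphic on $Z$ and smooth up to $\overline Z$; the $\dbar$-Mergelyan theorem for strictly pseudoconvex domains then shows that such a section is a uniform limit on $\overline Z$ of sections holomorphic on neighborhoods of $\overline Z$ in $\tilde Z$. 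As $d\mu$ is finite on $\overline Z$ and $\fh$ is continuous there, uniform convergence on $\overline Z$ implies convergence in $L^2(Z, \fh, d\mu)$; combined with the Cartan--Theorem-B extension this gives, for each $\delta > 0$, a section $G$ holomorphic up to $\partial X$ with $\int _Z \fh(G - P\sigma, G - P\sigma)\, d\mu \le \delta ^2$.

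The main obstacle is exactly this approximation step --- upgrading ``holomorphic and $L^2$ on $Z$'' to ``holomorphic on a neighborhood of $\overline Z$'' in the $L^2(Z, d\mu)$-norm. Soft Runge/Oka--Weil arguments give only uniform-on-compacts approximation, which leaves the approximants uncontrolled on the collar near $\partial Z$; what makes it work is the combination of the compact support of $\sigma$ (which localizes $\dbar \sigma$ away from $\partial Z$ and thereby forces $P\sigma$ to be smooth up to the boundary) with the strict pseudoconvexity and smoothness of $\partial Z$ (available after the reduction), which put the boundary regularity theory and the $\dbar$-Mergelyan theorem at our disposal. One also uses essentially that $d\mu$ is finite on $\overline Z$, which is where the hypothesis that $dT$ does not vanish identically on any component of $Z$ enters.
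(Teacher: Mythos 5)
Your proof is correct, but it takes a noticeably different --- and considerably more hands-on --- route than the paper. The paper's argument is a soft Runge-type argument: cite an $L^2$/Oka--Weil approximation theorem to produce sections holomorphic near $\overline Z$ that approximate $P\sigma$ uniformly on compacts of $Z$, and then pass from ``uniform on a large compact'' to ``small in $L^2(Z,d\mu)$'' by taking the compact large enough, finally extending from $\tilde Z$ to $\tilde X$ using Steinness. You instead go through the \emph{boundary regularity} of $P\sigma$: since $\sigma$ is compactly supported, $\dbar\sigma$ is as well, and the global regularity of the (weighted) $\dbar$-Neumann problem on the strictly pseudoconvex reduction of $Z$ forces $N\sigma$, hence $P\sigma = \sigma - N\sigma$, to be smooth up to $\overline Z$; you then apply a Mergelyan/Henkin-type theorem to get uniform approximation on $\overline Z$ by sections holomorphic on a neighborhood, which trivially gives $L^2$-approximation because $d\mu$ is a finite measure on $\overline Z$. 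The trade-offs are worth noting. Your route invokes heavier machinery (Kohn's subelliptic estimates, the $\dbar$-Mergelyan theorem, triviality of the bundle on a Stein neighborhood to reduce to scalars), whereas the paper stays at the level of $L^2$-Runge theory. On the other hand, your argument is more explicit about two things the paper leaves implicit: \emph{where the compact support of $\sigma$ is actually used} (it is precisely what makes $\dbar\sigma$ vanish near $\partial Z$ and hence makes $P\sigma$ boundary-regular), and \emph{why the passage from uniform-on-compacts to $L^2$ over all of $Z$ is legitimate} --- a step that is genuinely delicate, since an approximant built only to match $P\sigma$ on a compact $K\Subset Z$ carries no a priori bound on $Z\setminus K$. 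In that sense your boundary-regularity detour is doing real work that the paper's brief sketch glosses over. Two small points to be careful about if you flesh this out: (i) the reduction to strictly pseudoconvex $\{\rho<0\}$ must be recorded as part of the approximation scheme for Theorem \ref{ot-vb-flat} rather than assumed silently, since the projection $P$ changes with the domain; (ii) the $\dbar$-Neumann regularity you use is for the \emph{weighted} problem with weight $|dT|^{-2}_g$ twisted into the fiber metric --- this is fine because the weight is smooth and bounded above and below on $\overline Z$ once $dT$ is nowhere zero there, but it is worth saying so.
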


\begin{proof}
By $L^2$ approximation theorems (see, e.g., \cite[Theorem 5.6.2]{hormander-book}) we can find a section that is holomorphic on a neighborhood of $\overline{Z}$ and approximate $P\sigma$ uniformly on any compact subset of $Z$.  
%(Because $Z$ is singular, there is a slight technical point to be addressed here.  Namely, one has to work in the regular locus.  The estimates persist across the singularity by a well-known lemma due to Demailly \cite[Lemme 6.9]{dem-82}.)  
If we take the compact subset to be sufficiently large then we also get approximation in $L^2$.  Finally, since the ambient manifold is Stein, the approximation on a neighborhood of $\overline{Z}$ can be extended to a neighborhood of $\overline{X}$, and therefore the extension will have finite $L^2$ norm on $X$.
\end{proof}

\begin{proof}[Proof of Theorem \ref{lower-bound-for-xis}]
If $P\sigma = 0$ then there is nothing to prove, so we assume $P\sigma \neq 0$.  Also to slightly simplify the already cumbersome notation, we set 
\[
||P\sigma|| := \left ( \int _Z \fh (P\sigma,P\sigma) \frac{dA_g}{|dT|^2_g} \right ) ^{1/2}.
\]

Fix $\ve > 0$ and let $\sigma$ and $G$ be as in Lemma \ref{bergman-runge} with $\delta = \ve ||P\sigma||$ there.  Then
\begin{eqnarray*}
\left | \int _Z \fh (G,P\sigma) \frac{dA_g}{|dT|^2_g} \right | &=& \left | \int _Z \fh (G- P\sigma,P\sigma) \frac{dA_g}{|dT|^2_g}  + ||P\sigma||^2 \right | \\
&\ge& ||P\sigma||^2 - ||P\sigma|| \left (\int _Z  \fh( G - P\sigma, G-P\sigma)\frac{dA_{g}}{|dT|^2_g} \right )^{1/2} \ge (1- \ve)||P\sigma||^2,
\end{eqnarray*}
and thus 
\begin{equation}\label{xi-est}
||\xi _\sigma||^2_{t^*} \ge \frac{1}{||G||^2_t} \left | \int _Z \fh (G ,P\sigma)\frac{dA_{g}}{|dT|^2_{g}}\right |^2 \ge \frac{(1 - \ve)^2 ||P\sigma||^4}{||G||^2_t}.
\end{equation}
Now, 
\begin{equation}\label{Gtnorm-eqn}
||G||^2_t = e^{-t}\int _{X_t} \fh (G,G)dV_{g} + e^{(p-1)t }\int _{X-X_t}e^{-p\log |T|^2}\fh(G,G)dV_{g}.
\end{equation}
By Lemma \ref{fubini} and our assumption that $P\sigma \neq 0$, we can choose $t << 0$ so that 
\[
e^{-t}\int _{X_t} \fh(G,G)dV_{g}\le  (1+ \ve)||P\sigma||^2.
\]
For the second integral on the right hand side of \eqref{Gtnorm-eqn} we have
\[
e^{(p-1)t}\int _{X-X_t} e^{-p\log |T|^2}\fh(G,G) dV_{g} \le \left ( \sup _X \fh(G,G)\right ) e^{(p-1)t} \int _{X-X_t} e^{-p \log |T|^2} dV_{g}.
\]
But 
\[
e^{(p-1)t} \int _{X-X_t} e^{-p\log |T|^2} dV_{g} \le C  e^{-t} \int _t ^0  e^{-p(s-t)}d\nu (s), 
\]
where $\nu (t) = \int _{X_t}dV_{g}$.  Since $\nu (t) \le C' e^t$, Lemma \ref{deal-with-p} implies that 
\[
e^{(p-1)t }\int _{X-X_t} e^{-p\log |T|^2}\fh(G,G) dV_g  \le \frac{C_o}{p-1}
\]
for a sequence of `$t$'s tending to $-\infty$.  Thus for sufficiently large $p$ and sufficiently negative $t$ 
\[
||G||^2_t  \le (1+ \ve)||P\sigma||^2 + \ve.
\]
From \eqref{xi-est} we have 
\[
||\xi _\sigma||^2_{t^*} \ge \frac{(1-\ve)^2||P\sigma||^4}{(1+\ve)||P\sigma||^2 + \ve}
\]
Choosing $\ve > 0$ small enough yields the desired estimate.
\end{proof}

By combining Lemma \ref{convex-inc} and Theorem \ref{lower-bound-for-xis} we obtain \eqref{key-estimate-ot-vb-flat}, and Theorem \ref{ot-vb-flat} is proved.
\qed

\subsubsection{\sc Grauert Duality:  End of the proof of Theorem \ref{OTMnc}}

\subsubsection*{\bf Homogeneous expansion of holomorphic functions}

\begin{prop}\label{homog-expansion}
Let $E \to X$ be a holomorphic vector bundle, let $H \to X$ be a holomorphic line bundle and denote by $\fp :H^* \to X$ the dual bundle.  Let $\sigma \in \Gamma _{\cO} (H^*, \fp ^*H^*)$ denote the diagonal section 
\[
\sigma (v) := (v,v).
\]
Then for any $s \in H^0(H^*, \cO (\fp ^* E))$ there exist sections $a_j \in H^0 (X, \cO(H^{\tensor j}\tensor E))$, $j=0,1,...$ ,  such that 
\[
s = \sum _{j=0} ^{\infty} (\fp ^* a_j)\tensor \sigma ^{\tensor j}.
\]
\end{prop}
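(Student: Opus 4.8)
The plan is to develop $s$ into a power series along the fibres of $\fp : H^* \to X$, whose total space is fibred by copies of $\bbC$, and to identify the successive coefficients as global holomorphic sections of $H^{\tensor j} \tensor E$.

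First I would pass to a local picture. Fix an open cover $\{U_\alpha\}$ of $X$ over which both $H$ and $E$ are trivialised, with nowhere-vanishing transition functions $g_{\alpha\beta}$ for $H$ and matrix-valued transition functions $e_{\alpha\beta}$ for $E$, where $r$ denotes the rank of $E$. Then $\fp^{-1}(U_\alpha) \cong U_\alpha \times \bbC$ carries a fibre coordinate $w_\alpha$ with $w_\alpha = g_{\alpha\beta}^{-1} w_\beta$ on overlaps, and $s$ is given over $\fp^{-1}(U_\alpha)$ by a holomorphic map $s^{(\alpha)} : U_\alpha \times \bbC \to \bbC^r$ satisfying $s^{(\alpha)} = e_{\alpha\beta} s^{(\beta)}$. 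Because $s$ is holomorphic on all of the total space $H^*$, for each fixed $x$ the map $w \mapsto s^{(\alpha)}(x, w)$ is entire, so the Cauchy formula
\[
s^{(\alpha)}_j(x) := \frac{1}{2\pi\ii} \oint_{|w| = 1} \frac{s^{(\alpha)}(x, w)}{w^{j+1}}\, dw
\]
defines holomorphic maps $s^{(\alpha)}_j : U_\alpha \to \bbC^r$ (differentiation under the integral sign), and the Cauchy estimates together with entirety in $w$ show that $s^{(\alpha)}(x, w) = \sum_{j \ge 0} s^{(\alpha)}_j(x) w^j$ converges locally uniformly on $U_\alpha \times \bbC$.

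The next step is the transition-function bookkeeping. Substituting $w_\alpha = g_{\alpha\beta}^{-1} w_\beta$ into the expansion of $s^{(\alpha)}$ and comparing the coefficients of $w_\beta^j$ with those of $e_{\alpha\beta} \sum_j s^{(\beta)}_j w_\beta^j$ gives $s^{(\alpha)}_j = g_{\alpha\beta}^{\,j}\, e_{\alpha\beta}\, s^{(\beta)}_j$, which is exactly the cocycle relation defining $H^{\tensor j} \tensor E$; hence the local sections $\{s^{(\alpha)}_j\}_\alpha$ patch to a global section $a_j \in H^0(X, \cO(H^{\tensor j}\tensor E))$. On the other hand, the diagonal section $\sigma$ is represented over $\fp^{-1}(U_\alpha)$ by the fibre coordinate function $(x, w_\alpha) \mapsto w_\alpha$, so $(\fp^* a_j) \tensor \sigma^{\tensor j}$ is represented there by $s^{(\alpha)}_j(x)\, w_\alpha^j$ (using the canonical trivialisation of $H^{\tensor j} \tensor H^{* \tensor j}$). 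Summing over $j$ therefore reproduces $s^{(\alpha)}$ in each chart, and the series converges locally uniformly on $H^*$ — since it does so over every $\fp^{-1}(U_\alpha)$ — to a holomorphic section of $\fp^* E$ which is $s$.

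The one point that warrants care, and the closest thing to a real obstacle, is the observation that the fibrewise expansion is an \emph{entire} power series, so no radius-of-convergence or annulus issues intrude; this is immediate from the hypothesis that $s$ is holomorphic over the entire total space $H^*$ rather than merely over a disc subbundle of $H^*$. Everything else is the routine cocycle computation above. (One may also argue sheaf-theoretically, using that the direct image $\fp_* \cO_{H^*}$ contains $\bigoplus_{j \ge 0} \cO(H^{\tensor j})$ as its ``polynomial part'', but the hands-on version is shorter.)
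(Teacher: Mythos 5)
Your proof is correct and takes essentially the same approach as the paper: a fibrewise power series expansion whose coefficients are shown to transform as sections of $H^{\tensor j}\tensor E$. The only difference is bookkeeping — you work with an explicit local trivialization and the cocycle relation $s^{(\alpha)}_j = g_{\alpha\beta}^{\,j}\, e_{\alpha\beta}\, s^{(\beta)}_j$, whereas the paper writes a typical fiber point as $t\xi$ for a chosen $\xi\in H^*_x\setminus\{0\}$ and verifies directly that $A^\mu_j(\xi)\,\eta_\mu\tensor\xi^{\tensor -j}$ is independent of the choice of $\xi$; these are the same verification in two dialects.
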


\begin{proof}
Fix some $v \in H^*$ and a frame $\eta_1,...,\eta _r$ for $E \to X$ near $x = \fp v$.  Let $\xi \in H^*_v- \{0\}$ and let $t \xi$ be a typical point on the fiber $H_v$.  Writing our section as $s = f^{\mu} \fp ^* \eta_{\mu}$, we have the power series expansions 
\[
f^{\mu}(t\xi) = \sum _{j \ge 0} A^{\mu}_j(\xi) t^j, \quad 1 \le \mu \le r.
\]
If one starts with another $\tilde \xi \in H^*_v- \{0\}$ and $\tilde t \in \bbC$ such that $t \xi = \tilde t \tilde \xi$ then  
\[
\sum _{j \ge 0} A^{\mu}_j(\xi) t^j = f^{\mu}(t\xi) = f^{\mu}(\tilde t \tilde \xi) =  \sum _{j \ge 0} \tilde A^{\mu}_j(\tilde \xi) \tilde t^{j},
\]
from which it follows that 
\[
a_j(\fp \xi) := A^{\mu}_j(\xi) \eta _{\mu}\tensor \xi ^{\tensor - j}
\]
is well-defined, independent of $\xi$.  Thus $a_j \in \Gamma _{\cO} (X, E\tensor H^{\tensor j})$, and 
\[
\sum _{j=0} ^{\infty} \left ( \fp ^*a _j \tensor \sigma ^{\tensor j} \right ) (v) = \sum _{j=0} ^{\infty} A^{\mu}_j(v) (v)^{\tensor -j} \sigma (v)^{\tensor j} \tensor \fp ^* \eta_{\mu} = \sum _{j=0} ^{\infty} A^{\mu}_j(v)  \fp ^* \eta_{\mu} = f ^{\mu}\fp ^* \eta_{\mu},
\]
as required.
\end{proof}

\subsubsection*{\bf The unit disk bundle associated to an extension problem}\label{disk-bundle-geom}\ 

\medskip

We return to the setting of Theorem \ref{OTMnc}.  We have a Stein manifold $X$ and a possibly singular hypersurface $Z \subset X$.  Unlike the flat case studied in the previous section, the hypersurface $Z$ need not be cut out by a holomorphic function, much less a bounded one.  However, there is a line bundle $L_Z$ and a section $T \in H^0(X, \cO (L_Z))$ whose zero locus, counting multiplicity, is precisely $Z$.  

The more restrictive curvature hypothesis \eqref{curv-hyp-OTBL-vb} implies that the metric $\fg$ for $L_Z \to X$ has non-negative curvature.  In the total space of the dual bundle $L_Z^*$, with its dual metric $\fg ^*$, one can define the unit disk bundle 
\[
\sB (\fg) := \{ v \in L_Z ^*\ ;\ |v|^2_{\fg ^*} < 1 \}.
\]
A well-known observation of Grauert is that the vertical boundary 
\[
\di^{\rm vert}\sB (\fg) := \{ v \in L_Z^*\ ;\ |v|^2e^{\lambda} = 1\}
\]
of $\sB (\fg)$ is pseudoconvex if and only if $\fg$ has non-negative curvature.  The need for the pseudoconvexity of $\di^{\rm vert}\sB (\fg)$ is precisely the reason for assuming the stronger curvature condition \eqref{curv-hyp-OTBL-vb}.  Since the base $X$ is Stein, $\sB (\fg)$ is also Stein.

To the section $T$ we can associate a holomorphic function $\sT \in \cO (L_Z^*)$ defined by 
\[
\sT (v) = \left < v, T(\fp v)\right >,
\]
where $\fp : L_Z ^* \to X$ is the line-bundle projection.  On $\sB (\fg)$ we have the bound 
\[
|\sT (v)|^2 = |\left < v, T(\fp v)\right >|^2 = |v|^2 e^{\lambda} |T(\fp v)|^2e^{-\lambda} < 1.
\]
Note that the zero locus of of $\sT$ is 
\[
\sT^{-1}(0) =\cZ \cup \bbO _{L_Z^*},
\]
where $\cZ = \fp ^{-1} (Z)$ and $\bbO_{L_Z^*}$ is the zero section of $L_Z ^* \to X$.

We use the volume form  
\[
d\sV :=  \frac{\fp ^* \omega ^n}{n!} \wedge dd^c |v|^2_{\fg^*}
\]
on $\sB (\fg)$.  Its curvature, when viewed as a metric for $K_{\sB (\fg)}^*$, is 
\[
- \di \dbar \log d\sV = \fp ^*( {\rm Ricci}(\omega) - \Theta(\fg)).
\]
To make use of Theorem \ref{ot-vb-flat} we also need a K\"ahler metric.  Let us take the metric $\tilde g$ whose K\"ahler form is 
\[
\tilde \omega = \fp ^* \omega + c_o dd^c |v|^2_{\fg^*};
\]
the closed $(1,1)$-form $\tilde \omega$ is positive for small $c_o>0$.  The associated volume form is $dV_{\tilde g} = \det \tilde g$.

\subsubsection*{\bf End of the proof of Theorem \ref{OTMnc}}
We can pull back the vector bundle $E \to X$ and the line bundle $L_Z ^* \to X$ to the total space $L_Z ^*$ of $L_Z^* \to X$ via $\fp$ to define the vector bundles 
\[
E_m := \fp ^* (E \tensor L_Z ^{*\tensor m}) \to L_Z^*, \quad m \in \bbN.
\]
Let $\delta > 0$ be such that \eqref{curv-hyp-OTBL-vb} holds.  Letting $m$ be the unique positive integer such that 
\[
\delta _o := \delta +1 - m \in (0,1],
\]
we equip the vector bundle $E_m \to \sB(\fg)$ with the singular Hermitian metric 
\begin{equation}\label{db-metric}
\fh _m := (|\sigma |^2_{\tilde \fg})^{-(1-\delta_o)} \frac{d\sV}{\det \tilde g} \tilde \fg ^{\tensor m} \tensor \fh,
\end{equation}
where $\tilde \fg := \fp ^* \fg$ and $\sigma \in H^0(L_Z^*, \cO (\fp ^* L_Z^*))$ is the diagonal section (see Proposition \ref{homog-expansion}).  

We would like to apply Theorem \ref{ot-vb-flat} with the metric $\fh _m$, but we must be slightly more careful; this metric is not smooth.  However, it has a very simple regularization, namely 
\[
\fh _m ^{(\ve)} := (|\sigma |^2_{\tilde \fg}+ \ve^2 )^{-(1-\delta_o)} \frac{d\sV}{\det \tilde g} \tilde \fg ^{\tensor m} \tensor \fh.
\]
This regularization loses some positivity, but since we are on a relatively compact domain in a Stein manifold, we can regain this positivity using a bounded strictly plurisubharmonic function.  The details, which are standard, are omitted for brevity, and we shall treat $\fh_m$ like a smooth metric.

Under the hypotheses of Theorem \ref{OTMnc} %and the strengthened curvature assumption \eqref{curv-hyp-OTBL-vb}, 
the metric 
\[
\fh _m \tensor \det \tilde g
\]
for $E_m \tensor K^*_{\sB (\fg)}$ is Nakano-semipositive.  By Theorem \ref{ot-vb-flat} we obtain the following lemma.

\begin{lem}\label{ot-vb-db}
Let $\ff \in H^0(\sT ^{-1}(0), \cO (E_m))$ satisfy
\[
\int _{\sT^{-1}(0)} \fh _m (\ff, \ff) \frac{dA_{\tilde g}}{|d\sT|^2_{\tilde g}} < +\infty.
\]
Then there exists $\fF \in H^0(\sB(\fg), \cO (E_m))$ such that 
\[
\fF |_{\sT^{-1}(0)} = \ff \quad \text{and} \quad \int _{\sB(\fg)} \fh _m (\fF, \fF) dV_{\tilde g} \le \pi \int _{\cZ} \fh _m (\ff, \ff) \frac{dA_{\tilde g}}{|d\sT|^2_{\tilde g}}.
\]
\end{lem}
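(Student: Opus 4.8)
The plan is to deduce Lemma~\ref{ot-vb-db} directly from the flat-case extension theorem, Theorem~\ref{ot-vb-flat}, applied to the Stein manifold $\sB(\fg)$, the K\"ahler metric $\tilde g$, the bounded holomorphic function $\sT : \sB(\fg) \to \bbD$, and the Hermitian vector bundle $(E_m, \fh_m)$. Thus the real work is to verify the three hypotheses of Theorem~\ref{ot-vb-flat} in this situation and then to reconcile the resulting estimate with the one claimed.

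First I would check the hypotheses. Steinness of $\sB(\fg)$, the bound $|\sT|^2 < 1$ on $\sB(\fg)$, and the positivity of the closed $(1,1)$-form $\tilde\omega = \fp^*\omega + c_o\, dd^c|v|^2_{\fg^*}$ for small $c_o > 0$ have all been recorded above. It then remains to see that $d\sT$ vanishes identically on no irreducible component of $\sT^{-1}(0) = \cZ \cup \bbO_{L_Z^*}$: near a regular point of $Z$, in a local trivialization of $L_Z$ in which $T$ becomes a coordinate function, $\sT$ becomes the product of that coordinate with the fibre coordinate on $L_Z^*$, so $d\sT$ is manifestly not identically zero on $\cZ$ (it is nonzero off $\bbO_{L_Z^*}$) nor on $\bbO_{L_Z^*}$ (it is nonzero off $\cZ$). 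Finally, the curvature hypothesis of Theorem~\ref{ot-vb-flat} here reads $\ii\Theta(\fh_m) + {\rm Ricci}(\tilde g) \ge_{\rm Nak} 0$; since $\det\tilde g$, viewed as a metric for $K^*_{\sB(\fg)}$, has curvature ${\rm Ricci}(\tilde g)$, this is precisely the Nakano-semipositivity of $\fh_m \tensor \det\tilde g$ for $E_m \tensor K^*_{\sB(\fg)}$ asserted just before the statement, which in turn follows from \eqref{curv-hyp-OTBL-vb} together with the identity $-\di\dbar\log d\sV = \fp^*({\rm Ricci}(\omega) - \Theta(\fg))$ and the non-negativity of the current $\ii\di\dbar\log|\sigma|^2_{\tilde\fg}$. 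The only nonstandard point is that $\fh_m$ fails to be smooth along $\bbO_{L_Z^*}$ when $\delta_o < 1$; I would dispose of this exactly as indicated above, replacing $|\sigma|^2_{\tilde\fg}$ by $|\sigma|^2_{\tilde\fg} + \ve^2$, recovering the small loss of positivity with a bounded strictly plurisubharmonic function on the relatively compact Stein domain $\sB(\fg)$, applying Theorem~\ref{ot-vb-flat} for each $\ve > 0$, and letting $\ve \to 0$ --- or, following the convention adopted in the text, simply treating $\fh_m$ as smooth.

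Theorem~\ref{ot-vb-flat} then produces $\fF \in H^0(\sB(\fg), \cO(E_m))$ with $\fF|_{\sT^{-1}(0)} = \ff$ and
\[
\int_{\sB(\fg)} \fh_m(\fF,\fF)\, dV_{\tilde g} \;\le\; \pi \int_{\sT^{-1}(0)} \fh_m(\ff,\ff)\, \frac{dA_{\tilde g}}{|d\sT|^2_{\tilde g}}.
\]
To finish I would show that the integral over $\sT^{-1}(0) = \cZ \cup \bbO_{L_Z^*}$ on the right equals the integral over $\cZ$ alone, i.e.\ that $\ff$ contributes nothing along $\bbO_{L_Z^*}$. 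When $\delta_o < 1$ this is automatic: the weight $(|\sigma|^2_{\tilde\fg})^{-(1-\delta_o)}$ blows up along $\bbO_{L_Z^*}$, where $\sigma$ vanishes, so finiteness of $\int_{\sT^{-1}(0)} \fh_m(\ff,\ff)\, dA_{\tilde g}/|d\sT|^2_{\tilde g}$ forces $\ff$ to vanish identically there; and when $\delta_o = 1$ one has $m = \delta$, and the sections $\ff$ to which the lemma is to be applied --- of the form $(\fp^*f) \tensor \sigma^{\tensor m}$ restricted to $\sT^{-1}(0)$ --- again vanish along $\bbO_{L_Z^*}$ because $\sigma$ does. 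Either way the displayed estimate becomes the one in Lemma~\ref{ot-vb-db}.

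I do not expect a deep obstacle; the content is careful bookkeeping. The two places that call for genuine attention are (i) confirming that the curvature of $\fh_m \tensor \det\tilde g$ really is nonnegative in the sense of Nakano once the pulled-back Ricci curvature of $\omega$, the Fubini--Study-type contributions, and the singular weight are all correctly accounted for, and (ii) pushing the regularization $\fh_m^{(\ve)}$ through to the limit so that the extension it yields still satisfies the claimed norm bound. Both are routine-but-delicate, not conceptually hard.
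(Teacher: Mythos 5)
Your proposal is correct and follows the paper's approach exactly: the paper simply invokes Theorem~\ref{ot-vb-flat} on the Stein manifold $\sB(\fg)$ with the bounded function $\sT$, the K\"ahler metric $\tilde g$, and the (regularized) metric $\fh_m$, the Nakano-semipositivity of $\fh_m\tensor\det\tilde g$ having been established just before. Your extra care in reconciling the integration domains $\sT^{-1}(0)=\cZ\cup\bbO_{L_Z^*}$ versus $\cZ$ — noting that the singular weight forces $\ff$ to vanish along $\bbO_{L_Z^*}$ when $\delta_o<1$, while for $\delta_o=1$ the specific $\ff=\sigma^{\tensor m}\tensor\fp^*f$ used in the application vanishes there anyway — is a genuine subtlety the paper passes over silently, and it is to your credit that you noticed it.
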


If we apply Lemma \ref{ot-vb-db} to the section 
\[
\ff := \sigma ^{\tensor m} \tensor (\pi ^* f)
\]
we find a section $\fF \in H^0(\sB(\fg), \cO (E_m))$ such that $\fF |_{\sT ^{-1}(0)} = \sigma ^{\tensor m} \tensor \fp ^* \ff$ and
\begin{eqnarray*}
\int _{\sB(\fg)} \fh _m (\fF, \fF) dV_{\tilde g} &\le& \pi \int _{\cZ} \fh _m (\ff, \ff) \frac{dA_{\tilde g}}{|d\sT|^2_{\tilde g}}\\
&=& \pi \int _{\cZ} \fp ^* (\fh (f,f))( |\sigma |^2_{\fg ^*})^{m-\delta _o} dd^c( |\sigma|^2_{\tilde \fg^*}) \wedge \fp ^* \left (\frac{dA_g}{|dT|^2_{g, \fg}}\right )\\
 &=& \pi \int _{\cZ} \fp ^* (\fh (f,f))( |\sigma |^2 _{\tilde \fg^*})^{\delta -1} dd^c( |\sigma|^2_{\tilde \fg^*}) \wedge \fp ^* \left (\frac{dA_g}{|dT|^2_{g,\fg}}\right ).
\end{eqnarray*}
The section $\fF$ vanishes to order $m+1$ along $\bbO _{L_Z^*}$, and therefore 
\[
\fF = \sigma ^{\tensor m} \tensor \Phi
\]
for some section $\Phi \in H^0(\sB (\fg), \fp ^* E)$ which evidently satisfies 
\begin{eqnarray*}
&& \int _{\sB (\fg)} (\fp ^*\fh) (\Phi , \Phi)( |\sigma |^2_{\tilde \fg^*})^{m+1-\delta _o} dd^c( |\sigma|^2_{\tilde \fg^*}) \wedge \fp ^* dV_g  \\
&=& \int _{\sB (\fg)} (\fp ^*\fh) (\Phi , \Phi)( |\sigma |^2_{\tilde \fg^*})^{\delta} dd^c( |\sigma|^2_{\tilde \fg^*}) \wedge \fp ^* dV_g \\ &\le & \pi \int _{\cZ} \fp ^* (\fh (f,f))( |\sigma |^2_{\tilde \fg^*})^{\delta -1} dd^c( |\sigma|^2_{\tilde \fg^*}) \wedge \fp ^* \left (\frac{dA_g}{|dT|^2_{g,\fg}}\right ).
\end{eqnarray*}
Integration along the fibers of the disk bundle $\cZ$ shows that 
\[
\int _{\cZ} \fp ^* (\fh (f,f))( |\sigma |^2_{\tilde \fg^*})^{\delta -1} dd^c( |\sigma|^2_{\tilde \fg^*}) \wedge \fp ^* \left (\frac{dA_g}{|dT|^2_{g,\fg}}\right ) = \frac{1}{\delta} \int _Z \fh (f,f) \frac{dA_g}{|dT|^2_{g,\fg}}.
\]
Now, by Proposition \ref{homog-expansion} the section $\Phi$ has the (convergent) series expansion 
\[
\Phi = \fp ^* F  + \sum _{i \ge 1} \sigma ^{\tensor i} \fp ^* a_i.
\]
In particular, $F$ is an extension of $f$.  Moreover, by orthogonality we have
\begin{eqnarray*}
&& \int _{\sB (\fg)} (\fp ^*\fh) (\Phi , \Phi)( |\sigma |^2_{\tilde \fg^*})^{\delta} dd^c( |\sigma|^2_{\tilde \fg^*}) \wedge \fp ^* dV_g \\
&=& \frac{1}{\delta +1} \int _{X} \fh (F,F)  dV_g  + \sum _{i\ge 1}\frac{1}{\delta +i} \int _{X} \fh (a_i,a_i)  dV_g\\
&\ge &\frac{1}{\delta +1} \int _{X} \fh (F,F)  dV_g.
\end{eqnarray*}
Hence the extension $F$ of $f$ satisfies 
\[
\int _X \fh (F,F) dV_g \le \frac{\pi (1+\delta)}{\delta} \int _Z \fh (f,f) \frac{dA_g}{|dT|^2_{g,\fg}}.
\]
The proof of Theorem \ref{OTMnc} under the stronger hypothesis \eqref{curv-hyp-OTBL-vb} is therefore complete.
\qed

\begin{rmk}
A slight modification of Lemma \ref{deal-with-p} discovered by Albesiano in \cite{roberto} can be used here to yield Theorem \ref{OTM} with the curvature hypotheses 
\begin{equation}
\ii \Theta (\fh) +{\rm Ricci}(g) \ge_{\rm Nak} \delta \Theta (\fg) \tensor {\rm Id}_{E} \quad\text{and} \quad \ii \Theta (\fh) \ge_{\rm Nak} (1+\delta) \ii \Theta (\fg) \tensor {\rm Id}_{E}.
\end{equation}
This result was also achieved by Nguyen and Wang \cite{nw}, by rather different methods that, though more complicated than Albesiano's approach, seem to have farther-reaching consequences.

Though these results constitute an improvement on \eqref{curv-hyp-OTBL-vb}, they still do not reach \eqref{curv-hyp-OTnc}.  It remains an open question whether the degeneration technique of Berndtsson and Lempert can somehow be modified to prove the full Theorem \ref{OTM}.  Moreover, thus far the method has not been able to achieve other extension theorems.  For example, it is not known whether a proof of Theorem \ref{gz-thm} of Guan-Zhou is possible by the method of Berndtsson and Lempert.
\red
\end{rmk}

\section{Attempting to prove Theorem \ref{OTMnc} by passing to $\cO _E(1) \to \bbP(E^*)$}\label{OT-Grif-pf-section}

A standard technique for treating higher rank vector bundles is to reduce them to line bundles by the ideas discussed in Paragraph \ref{O1-of-E-par}.  There remain difficult questions regarding how far this technique can reach, with the most famous problem being the Griffiths conjecture that ampleness in the sense of Hartshorne is equivalent to the existence of a metric of Griffiths-positive curvature.  

In view of Proposition \ref{section-id-prop} it is natural to inquire whether a proof Theorem \ref{OTM} could be undertaken by passing to the projectivization.  It is well-known and easy to check that the metric $\fh$ is Griffiths-positive if and only if the metric $e^{-\vp_{\fh}}$ for $\pi : \cO (1) \to \bbP(E^*)$ is positive.  However, while there is a correspondence between sections of $E$ and those of $\cO _E(1) \to \bbP(E^*)$, this correspondence sends sections of $K_X \tensor E$ to sections of $\pi ^* K_X \tensor \cO _E(1)$.  To obtain sections of $K_{\bbP(E^*)} \tensor L \to \bbP(E^*)$ for an appropriate line bundle $L \to \bbP(E^*)$, one must find the relationship between $\cO_{\bbP(E^*)} (\pi ^* K_X)$ and $\cO_{\bbP(E^*)}(K_{\bbP(E^*)}$.  The key is the formula 
\[
K_{\bbP(E^*)}= \cO_E (-r) \tensor \pi ^* (K_X \tensor \det E)
\]
(see \cite[Proposition 3.6.20]{kob}).  We conclude that 
\[
\pi ^* K_X \tensor \cO _E(1) = K_{\bbP(E^*)} \tensor \cO _E(r+1) \tensor \pi ^*(\det E^*).
\]

If we use only that $\bbP(E^*)$ is a complex manifold, and do not try to exploit its special geometric features, then to apply Theorem \ref{OTM} in the rank-1 case we need to assume at least that the line bundle $\cO _E(r+1) \tensor \pi ^*(\det E^*) \to \bbP(E^*)$ is pseudo-effective (and of course, we would also need to overcome the normal bundle of $\pi ^{-1} (Z)$, though we will not discuss this issue here).

Consider the line bundle 
\[
\cO_E (r+1) \tensor \det E^*.
\]
The positivity of this line bundle follows if we show that  the ``$\bbQ$-vector bundle" 
\[
F := (\det E^*)^{1/(r+1)} \tensor E
\]
is Griffiths-positive.  (The converse is a case of the aforementioned Griffiths conjecture.)  But even if we assume only that $\cO_E (1) \tensor (\det E^*)^{1/(r+1)}$ is (semi)positive, a beautiful result of Berndtsson \cite[Theorem 7.1]{bo-annals} implies that the vector bundle 
\[
F \tensor \det F = \left ((\det E^*)^{1/(r+1)} \tensor E \right ) \tensor (\det E^*)^{r/(r+1)} \tensor \det E = E
\]
is Nakano-(semi)positive.

In order to give a proof of Theorem \ref{OTM} by passing to $\cO _E(1) \to \bbP(E^*)$, we need the following converse:  if the metric for $E$ is Nakano-(semi)positive then the induced metric for $\cO _E(r+1) \tensor \det E^*$ is (semi)positive.  Unfortunately, such a result is not true in general.  The following example presents a Nakano-positive holomorphic Hermitian vector bundle $E$ such that the induced metric on $\cO_E(r+1) \tensor \det E^*$ is not semi-positive.

\begin{ex}
Let $L^i  \to X$ be holomorphic line bundles with smooth Hermitian metrics $\fh^i$ and consider the totally split holomorphic Hermitian vector bundle let 
\[
E = L^1 \oplus \cdots \oplus L^r \quad \text{ with the induced metric }\fh := \fh ^1 \oplus \cdots \oplus \fh ^r.
\]
A holomorphic section $s \in \cO (E)_x$ has a decomposition $\sigma = (\sigma^1,...,\sigma^r)$ with $\sigma^i \in \cO (L^{i})_x$.

We begin by observing that for such totally split $(E,\fh)$ the following are equivalent.
\begin{enumerate}[i.]
\item $(E, \fh)$ is Nakano positive.
\item Each $(L^i,\fh^i)$ is positive.
\item $(E,\fh)$ is Griffiths positive.
\end{enumerate}
The implications (i) $\Rightarrow$ (iii) $\Rightarrow$ (ii) are straightforward. Now, 
\begin{equation}\label{nak-split-cond-eq}
\sum _{\mu,\nu=1} ^N \fh (\ii \Theta (\fh)_{\xi _{\mu}\bar \xi _{\nu}}\sigma_{\mu},\sigma_{\nu}) = \sum _{\mu,\nu =1} ^N \sum _{i=1} ^r \Theta (\fh ^i)_{\xi _{\mu}\bar \xi _{\nu}} \fh ^i (\sigma_{\mu}^i, \sigma_{\nu}^i).
\end{equation}
For each $i$ the matrix $\left (\fh ^i (\sigma_{\mu}^i, \sigma_{\nu}^i) \right )_{\mu, \nu}$ is Hermitian, so there is a unitary matrix $U(i)$ such that
\[
\fh ^i (U(i)_{\mu} ^{\alpha} \sigma_{\alpha}^i, U(i)_{\nu}^{\beta} \sigma_{\beta}^i) = |a^i_{\mu}|^2 \delta _{\mu \bar \nu};
\]
the right hand side must be non-negative because $\fh ^i$ is a metric.  Therefore the right hand side of \eqref{nak-split-cond-eq} is positive if each of the matrices $\left ( \Theta (\fh ^i)_{\xi _{\mu}\bar \xi _{\nu}} \right )_{\mu,\nu}$ is positive definite, so that (ii)  $\Rightarrow$ (i).

We shall now show that for some choice of metrics $\fh ^1,..., \fh ^r$ the metric $\tilde \fh _r$ induced on $\cO_E(r+1)\tensor \det E^*$ does not have positive curvature.  

First, the metric induced on $\cO _E(-1)$ by $\fh$ is given by 
\[
|v|^2e^{\vp_{\fh}} = \fh ^*(v,v);
\]
see Paragraph \ref{O1-par}.  Thus the curvature $\ii \di \dbar \vp _{\fh}$ of the dual metric $e^{-\vp _{\fh}}$ for $\cO_E(1)$ is the negative of the curvature of the metric $e^{\vp _{\fh}}$, so 
\[
\ii \di \dbar \vp _{\fh} = - \ii \di \dbar \log |\sigma|^2e^{-\vp_{\fh}} = \ii \di \dbar \log \fh ^*(s,s),
\]
where $\sigma$ is a local frame for $\cO_E(1)$, and $s= (s^1,...,s^r)$ is the corresponding section of $E^*$ associated to the dual frame $\sigma ^*$ by the formula $\hat s = \sigma ^*$; see Proposition \ref{section-id-prop}, noting that the latter proposition is applied to the vector bundle $E^*$, not $E$.  (The formula for $\ii \di \dbar \vp _{\fh}$ is independent of the choice of $s$ representing a section of $\cO_E(-1)$, because another choice $\tilde s$ must be of the form $\tilde s = \lambda s$ for some nowhere-vanishing holomorphic function on the domain of $s$.)

We compute that
\begin{eqnarray*}
&& \ii \di\dbar \log \fh^* (s,s) = \ii \di \dbar \log \sum _{i=1} ^r \fh^{i*} (s^i,s^i) \\
&& \quad = \frac{\sum _{i=1} ^r \fh^{i*} (-\ii \Theta (\fh^{i*})s^i,s^i)}{\sum _{i=1} ^r \fh^{i*} (s^i,s^i)}  \\
&& \quad \quad +\frac{\left ( \sum _{i=1} ^r \fh^{i*} (s^i,s^i) \right ) \left (\fh^{i*} (\nabla s^i, \nabla s^i) \right ) - \ii \left (\sum _{i=1} ^r \fh^{i*} (\nabla s^i,s^i)\right ) \wedge \left (\sum _{i=1} ^r \fh^{i*} (s^i,\nabla s^i) \right )}{\left (\sum _{i=1} ^r \fh^{i*} (s^i,s^i)\right )^2}\\
&& \quad \ge \frac{\sum _{i=1} ^r \fh^{i*} (-\ii \Theta (\fh^{i*})s^i,s^i)}{\sum _{i=1} ^r \fh^{i*} (s^i,s^i)} = \frac{\sum _{i=1} ^r \ii \Theta (\fh^i) \fh^{i*} (s^i,s^i)}{\sum _{i=1} ^r \fh^{i*} (s^i,s^i)}.
\end{eqnarray*}
The inequality is sharp, as can be seen using sections with zero covariant derivative at a point.  Next 
\[
\ii \di \dbar \log \det \fh = - \sum _{i=1} ^r \ii \Theta (\fh^i).
\]
Therefore the curvature $\Theta(\hat \fh)$ of the metric $\hat \fh$ induced on $\cO _E(r+1) \tensor \det E^*$ by the split metric $\fh = \fh^1 \oplus ... \oplus \fh ^r$ for $E$ is positive if and only if for every collections of $r$ local holomorphic sections $s^i$ of $L^i$, $1 \le i \le r$ at least one of which is non-zero, one has 
\begin{equation}\label{OT-curv-0-ex1}
(r+1) \frac{\sum _{i=1} ^r \ii \Theta (\fh^i) \fh^{i*} (s^i,s^i)}{\sum _{i=1} ^r \fh^{i*} (s^i,s^i)} - \sum _{i=1} ^r \ii \Theta (\fh^i) \ge 0.
\end{equation}
The inequality \eqref{OT-curv-0-ex1} holds for all sections $(s^1,...,s^r)$ at least one of which is non-zero if and only if for every $\xi \in T^{1,0}_{X}$ and for all $\lambda _1,..., \lambda _r \in [0,1]$ such that $\lambda _1 +\cdots + \lambda _r = 1$ one has 
\begin{equation}\label{OT-curv-0-ex2}
\sum_i \lambda _i \Theta (\fh^i)_{\xi \bar \xi} \ge \frac{1}{r+1} \left (\Theta (\fh^1)_{\xi \bar \xi}+ \cdots + \Theta (\fh^r)_{\xi\bar \xi}\right )
\end{equation}
i.e., every convex combination of the Hermitian $(1,1)$-forms $\Theta (\fh^1), ... ,\Theta (\fh^r)$ is bounded below by the $(1,1)$-form $\frac{1}{r+1} \left (\Theta (\fh^1)+ \cdots + \Theta (\fh^r)\right )$.

However, there exist positively curved metrics $\fh ^1,...,\fh^r$ (so in particular, $E$ is Nakano positive) for which this is not the case.  Indeed, \eqref{OT-curv-0-ex2} implies that 
\begin{equation}\label{outcome-ex1}
\Theta (\fh^1)_{\xi \bar \xi}+ \cdots + \Theta (\fh^r)_{\xi \bar \xi} \ge 0 \quad \text{and} \quad \min _i  \Theta (\fh^i)_{\xi \bar \xi} \ge  \frac{1}{r} \max _i \Theta (\fh ^i)_{\xi \bar \xi},
\end{equation}
and we can clearly choose metrics $\fh^1,..., \fh ^r$ with positive curvature such that the second of these conditions fails.
\red
\end{ex}

\end{document}